\def\newaliasedtheorem#1[#2]#3{
	\newaliascnt{#1@alt}{#2}
	\newtheorem{#1}[#1@alt]{#3}
	\expandafter\newcommand\csname #1@altname\endcsname{#3}
}
\numberwithin{equation}{section}
\newtheoremstyle{slanted}{\topsep}{\topsep}{\slshape}{}{\bfseries}{.}{.5em}{}
\theoremstyle{plain}
\newtheorem{theorem}{Theorem}[section]
\theoremstyle{definition}
\theoremstyle{remark}
\newcommand{\setN}{\mathbb{N}}
\newcommand{\setR}{\mathbb{R}}
\newcommand{\eps}{\varepsilon}
\let\altphi\phi
\let\phi\varphi
\let\varphi\altphi
\let\altphi\undefined
\newcommand{\abs}[1]{\left\lvert#1\right\rvert}
\newcommand{\norm}[1]{\left\lVert#1\right\rVert}
\newcommand{\weakto}{\rightharpoonup}
\newcommand{\Id}{\mathrm{Id}}
\DeclareMathOperator{\tr}{tr}
\DeclareMathOperator{\Hess}{Hess}
\newcommand{\di}{\mathop{}\!\mathrm{d}}
\newcommand{\res}{\mathop{\hbox{\vrule height 7pt width .5pt depth 0pt
			\vrule height .5pt width 6pt depth 0pt}}\nolimits}
\DeclareMathOperator{\Per}{Per}
\DeclareMathOperator{\Ric}{Ric}
\newcommand{\haus}{\mathscr{H}}
\newcommand{\Leb}{\mathscr{L}}
\newcommand{\dist}{\mathsf{d}}
\newcommand{\meas}{\mathfrak{m}}
\DeclareMathOperator{\CD}{CD}
\DeclareMathOperator{\RCD}{RCD}
\newfont{\tmpf}{cmsy10 scaled 2500}
\def\XXint#1#2#3{{\setbox0=\hbox{$#1{#2#3}{\int}$ }
		\vcenter{\hbox{$#2#3$ }}\kern-.6\wd0}}
\begin{document}
	
\title{The metric measure boundary of spaces with Ricci curvature bounded below}

\author{Elia Bru\`{e}, Andrea Mondino, and Daniele Semola}

\address{School of Mathematics, Institute for Advanced Study\\
	1 Einstein Dr.\\
	Princeton NJ 05840\\
	U.S.A.}
\email{elia.brue@math.ias.edu}

\address{Mathematical Institute, University of Oxford\\
	Oxford OX2 6GG \\
	United Kingdom}
\email{Andrea.Mondino@maths.ox.ac.uk}

\address{Mathematical Institute, University of Oxford\\
	Oxford OX2 6GG \\
	United Kingdom}
\email{Daniele.Semola@maths.ox.ac.uk}

\maketitle

\begin{abstract}

We solve a conjecture raised by Kapovitch, Lytchak and Petrunin in \cite{LytchakKapovitchPetrunin21} by showing that the \textit{metric measure boundary} is vanishing on any $\RCD(K,N)$ space $(X,\dist,\haus^N)$ without boundary.
Our result, combined with \cite{LytchakKapovitchPetrunin21}, settles an open question about the existence of infinite geodesics on Alexandrov spaces without boundary raised by Perelman and Petrunin in 1996.

\end{abstract}

\tableofcontents

\section{Introduction and main results}

We study the {\it metric measure boundary} of noncollapsed spaces with Ricci curvature bounded from below. We work within the framework of $\RCD(K,N)$ spaces, a class of infinitesimally Hilbertian metric measure spaces verifying the synthetic Curvature-Dimension condition $\CD(K,N)$ from \cite{Sturm06a,Sturm06b,LottVillani}. We assume the reader to be familiar with the $\RCD$ theory addressing to \cite{AmbrosioGigliSavare14,Gigli15,AmbrosioGigliMondinoRajala15,AmbrosioGigliSavare15,ErbarKuwadaSturm15,AmbrosioMondinoSavare19, CavallettiMilman21} for the basic background. Further references for the statements relevant to our purposes will be pointed out subsequently in the note.

\smallskip

Given an $\RCD(-(N-1),N)$ space $(X,\dist, \haus^N)$ and $r>0$, we introduce
\begin{equation}\label{eq:defmur}
\mu_r(\di x)
:= \frac{1}{r} \mathcal{V}_r (\di x)
= \frac{1}{r}\left(1-\frac{\haus^N(B_r(x))}{\omega_Nr^N}\right)\haus^N (\di x)\, ,
\end{equation}
where $\mathcal{V}_r$ is the \emph{deviation measure} in the terminology of \cite{LytchakKapovitchPetrunin21}, $\haus^N$ is the $N$-dimensional Hausdorff measure, $\omega_N$ is the volume of the unit ball in $\setR^N$ and $B_r(x)$ denotes the open ball of radius $r$ centered at $x\in X$.
\smallskip

If $(X,\dist)$ is isometric to a smooth $N$-dimensional Riemannian manifold $(M,g)$ without boundary, it is a classical result that
\begin{equation}\label{eq:}
 \frac{\haus^N(B_r(x))}{\omega_Nr^N}=1-\frac{\mathrm{Scal}(x)}{6(N+2)}r^2+O(r^4)\, ,\quad\text{as $r\downarrow 0$}\, ,
\end{equation}
at any point $x$, where $\mathrm{Scal}(x)$ denotes the scalar curvature of $(M,g)$ at $x$. Then it is a standard computation to show that 
$$\mu_r\to \gamma (N)\haus^{N-1}\res\partial X \quad \text{ weakly as measures as $r\downarrow 0$}\, ,
$$ 
when $(X,\dist)$ is isometric to a smooth Riemannian manifold with boundary $\partial X$. Here $\haus^{N-1}$ is the $(N-1)$-dimensional volume measure and $\gamma(N)>0$ is a universal constant depending only on the dimension (see \eqref{eq:defgammaNintro} below for the explicit expression).

This observation motivates the following definition, see \cite[Definition 1.5]{LytchakKapovitchPetrunin21}.

\begin{definition}
We say that an $\RCD(-(N-1),N)$ space $(X,\dist,\haus^N)$ has locally finite metric measure boundary if the family of Radon measures $\mu_r$ as in \eqref{eq:defmur} is locally uniformly bounded for $0<r\le 1$. If there exists a weak limit $\mu=\lim_{r\downarrow 0}\mu_r$, then we shall call $\mu$ the metric measure boundary of $(X,\dist,\haus^N)$. Moreover, if $\mu=0$, we shall say that $X$ has vanishing metric measure boundary. 
\end{definition}

We recall that the boundary of an $\RCD(-(N-1),N)$ metric measure space $(X,\dist,\haus^N)$ is defined as the closure of the top dimensional singular stratum
\begin{equation}
\mathcal{S}^{N-1}\setminus\mathcal{S}^{N-2}
:= \{x\in X\, : \, \textit{the half space $\setR^N_+$ is a tangent cone at $x$}\}\, .
\end{equation}
When $\mathcal{S}^{N-1}\setminus\mathcal{S}^{N-2}=\emptyset$, we say that $X$ has no boundary. 
We refer to \cite{BrueNaberSemola22} (see also the previous \cite{DePhilippisGigli18,KapovitchMondino19}) for an account on regularity and stability of boundaries of $\RCD$ spaces.

Our goal is to prove the following.

\begin{theorem}\label{thm:mms0}
Let $N\ge 1$ and $(X,\dist,\haus^N)$ be an $\RCD(-(N-1),N)$ metric measure space. Let $p\in X$ be such that $B_2(p)\cap \partial X = \emptyset $ and $\haus^N(B_1(p)) \ge v>0$, then
\begin{equation}\label{eq:bound effective}
	|\mu_r|(B_1(p)) \le C(N,v) \, ,
	\qquad
	\text{for any $r>0$} \, ,
\end{equation}
and $\lim_{r\downarrow 0} |\mu_r|(B_1(p)) = 0$.
In particular, if $X$ has empty boundary then it has vanishing metric measure boundary.
\end{theorem}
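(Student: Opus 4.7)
The plan is to establish both assertions by a two-step strategy: first, a uniform bound of the family $\{\mu_r\}_{r\in(0,1]}$ ensuring weak$^*$ precompactness, and then identification of every subsequential weak limit with the zero measure on $B_1(p)$.

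For the uniform bound \eqref{eq:bound effective}, I would first invoke the noncollapsing hypothesis $\haus^N(B_1(p)) \ge v$ together with Bishop--Gromov volume comparison to get a uniform lower bound $\haus^N(B_s(x)) \ge c(N,v)\, s^N$ for every $x \in B_{3/2}(p)$ and $s \in (0, 1/2]$. The quantity to control is
\begin{equation*}
|\mu_r|(B_1(p)) = \frac{1}{r}\int_{B_1(p)} \left(1 - \frac{\haus^N(B_r(x))}{\omega_N r^N}\right) \haus^N(\di x) \, ,
\end{equation*}
which needs an integrated deviation estimate of order $r$, not just the trivial order-one bound. To produce such an estimate I would aim for a divergence-type representation: one expects $r\mu_r$ to coincide, modulo controlled errors, with a distributional divergence of a vector field built from gradients of squared distance functions, so that integration by parts produces only a boundary contribution supported on $\partial X$. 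Since $\partial X\cap B_2(p) = \emptyset$, this contribution vanishes inside $B_1(p)$ and only an interior remainder of order $r$ survives. Concretely, dualizing against a test function $\phi\in\Lipbs(B_1(p))$ and invoking Laplacian comparison applied to $\dist^2(\cdot,y)$, the target is an inequality $|\int \phi\, \di\mu_r| \le C(N,v)\, \norm{\phi}_\infty$ uniform in $r$.

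For the vanishing limit, I would exploit the $\haus^N$-rectifiable structure of noncollapsed $\RCD$ spaces: at $\haus^N$-a.e.\ point $x\in B_1(p)$ the tangent cone is $\setR^N$, hence $\haus^N(B_r(x))/\omega_N r^N\to 1$ and the density of $\mu_r$ vanishes pointwise. Combined with the uniform bound from the previous step, any weak$^*$ subsequential limit $\nu$ of $\mu_{r_k}\res B_1(p)$ must be singular with respect to $\haus^N$. On the other hand, by the boundary structure and stability theory developed in \cite{BrueNaberSemola22} together with the identification in the smooth model $\mu_r \to \gamma(N)\haus^{N-1}\res\partial X$, any such $\nu$ is supported on $\partial X$. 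Since $\partial X\cap B_2(p) = \emptyset$, this forces $\nu\res B_1(p)=0$; arbitrariness of the subsequence yields $\lim_{r\downarrow 0}|\mu_r|(B_1(p))=0$.

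The hard part will be the integrated order-$r$ decay of the volume deviation. On smooth Riemannian manifolds this follows from the $O(r^2)$ pointwise scalar-curvature expansion, but the $\RCD$ axioms a priori provide only a first-order volume comparison; upgrading to the order-$r$ integrated improvement needed here seems to demand either a second-order infinitesimal analysis (via Bochner-type identities or heat kernel asymptotics) or a global divergence representation of $\mu_r$ whose boundary trace cancels precisely because $\partial X\cap B_2(p) = \emptyset$.
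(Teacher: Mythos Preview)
Your proposal has several genuine gaps, and in fact misidentifies where the difficulty lies.

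\medskip

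\textbf{The pointwise vanishing step is a non sequitur.} You write that at $\haus^N$-a.e.\ point the tangent cone is $\setR^N$, ``hence $\haus^N(B_r(x))/\omega_N r^N\to 1$ and the density of $\mu_r$ vanishes pointwise.'' But the density of $\mu_r$ is $\frac{1}{r}\bigl(1-\haus^N(B_r(x))/\omega_N r^N\bigr)$, so what you need is not convergence of the volume ratio to $1$ but convergence at rate $o(r)$. This is \emph{not} a consequence of the classical regularity theory: the standard arguments from \cite{Colding97,CheegerColding97,CheegerColding2000b} give at best a rate $o(r^\alpha)$ for some $\alpha(N)<1$. Obtaining the $o(r)$ rate at almost every point is precisely the content of \autoref{prop:or}, one of the main new contributions of the paper, and its proof requires building almost-splitting maps with Hessian vanishing at a reference point via the second-order differential calculus of \cite{Gigli18}.

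\medskip

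\textbf{The support argument for the weak limit is circular.} You claim that any subsequential limit $\nu$ must be supported on $\partial X$ ``by the boundary structure and stability theory developed in \cite{BrueNaberSemola22} together with the identification in the smooth model $\mu_r \to \gamma(N)\haus^{N-1}\res\partial X$.'' That identification is \autoref{thm:boundary intro} of the present paper, which is proved \emph{after} and \emph{using} \autoref{thm:mms0}. No independent result forces the limit onto $\partial X$; that is exactly what is being established. Moreover, even granting pointwise vanishing of the densities, the uniform total-variation bound alone does not make the weak limit singular with respect to $\haus^N$: you would need uniform integrability, which is again part of what the paper has to prove (via the domination by $C(N)M_{10}|\Hess u|$ on a good set).

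\medskip

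\textbf{The uniform bound strategy is not a proof.} You yourself flag that the ``divergence-type representation'' is speculative and that upgrading to an integrated order-$r$ decay is the hard part. There is no known global divergence formula for $\mu_r$ of the kind you describe. The paper's route is entirely different: it proves a quantitative volume estimate (\autoref{prop:quantitativevolcE}) in terms of harmonic $\delta$-splitting maps, exploiting their $L^2$ Hessian bounds, then combines a weighted maximal function argument with the quantitative covering \autoref{thm:decompositiontheorem} from \cite{BrueNaberSemola22} to globalize. The absence of boundary enters not through a vanishing boundary trace in an integration by parts, but through the absence of $\eta$-boundary balls in the covering decomposition.
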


The effective bound \eqref{eq:bound effective} is new even when $(X,\dist)$ is isometric to a smooth $N$-dimensional manifold satisfying $\Ric \ge -(N-1)$. However, the most relevant outcome of \autoref{thm:mms0} is the second conclusion, showing that $\RCD$ spaces $(X,\dist,\haus^N)$ with empty boundary have vanishing metric measure boundary. This implication was unknown even in the setting of Alexandrov spaces, where it was conjectured to be true by Kapovitch-Lytchak-Petrunin \cite{LytchakKapovitchPetrunin21}. By the compatibility between the theory of Alexandrov spaces with sectional curvature bounded from below and the $\RCD$ theory, see \cite{Petrunin11} and the subsequent \cite{ZhangZhu10}, \autoref{thm:mms0} fully solves this conjecture.

\medskip

We are able to control the metric measure boundary also for $\RCD(-(N-1),N)$ spaces $(X,\dist,\haus^N)$ with boundary under an extra assumption. The latter is always satisfied on Alexandrov spaces with sectional curvature bounded below and on noncollapsing limits of manifolds with convex boundary and Ricci curvature uniformly bounded below. 
\\We shall denote
\begin{equation*}
	V_r(s) : = \frac{\Leb^N(B_r((0,s))\cap \{x_N>0\})}{\omega_N r^N} \, ,
\end{equation*}
where $(0,s)\in \setR^{N-1}\times \setR_+$. Moreover, we set 
\begin{equation}\label{eq:defgammaNintro}
\gamma(N) := \omega_{N-1}\int_0^1(1-V_1(t))\di t\, .
\end{equation}

\begin{theorem}\label{thm:boundary intro}
Let $(X,\dist,\haus^N)$ be either an Alexandrov space with (sectional) curvature $\geq -1$ or a noncollapsed limit of manifolds with convex boundary and $\Ric \ge -(N-1)$ in the interior. Let $p\in X$ be such that $\haus^N(B_1(p))\ge v>0$. Then
\begin{equation}
	\mu_r(B_2(p))\le C(N,v) \, ,
	\quad
	\text{for any $r>0$} \, .
\end{equation}
Moreover, 
\begin{equation}
	\mu_r \weakto \gamma(N)\haus^{N-1}\res \partial X\, ,
	\qquad
	\text{as $r\downarrow 0$}\, ,
\end{equation}
where $\gamma(N)>0$ is the constant defined in \eqref{eq:defgammaNintro}.
\end{theorem}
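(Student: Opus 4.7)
The plan is to reduce \autoref{thm:boundary intro} to \autoref{thm:mms0} by doubling along $\partial X$, and then extract the boundary contribution through a blow-up / tubular-fibration argument. In both settings covered, the metric double $D(X) := X \sqcup_{\partial X} X$ is an $\RCD(-(N-1),N)$ space with empty boundary: for Alexandrov spaces this is Perelman's doubling theorem combined with the $\CD$-compatibility of \cite{Petrunin11}, while for noncollapsed limits of manifolds with convex boundary and $\Ric \ge -(N-1)$ gluing preserves the curvature-dimension bound (by convexity of $\partial X$) and noncollapsed stability upgrades the conclusion to $\RCD$. The noncollapsing hypothesis $\haus^N(B_1(p)) \ge v$ passes to $D(X)$ trivially.

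Since $\dist_{D(X)}$ restricted to $X$ equals $\dist_X$, one has $B^{D(X)}_r(x) \cap X = B^X_r(x)$ for any $x \in X$; writing $A_r(x) \subset X$ for the image, under the canonical reflection, of $B^{D(X)}_r(x)$ intersected with the opposite copy $X'$, one obtains
\begin{equation*}
	\haus^N_{D(X)}(B^{D(X)}_r(x)) = \haus^N_X(B^X_r(x)) + \haus^N_X(A_r(x))\, .
\end{equation*}
This identity yields the pointwise decomposition $\mu^X_r = \mu^{D(X)}_r\restr_X + \eta_r$ where
\begin{equation*}
	\eta_r(\di x) := \frac{1}{r}\frac{\haus^N_X(A_r(x))}{\omega_N r^N}\haus^N_X(\di x) \ge 0
\end{equation*}
is supported in the $r$-tubular neighborhood $T_r := \{\dist(\cdot, \partial X) < r\}$ of $\partial X$. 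Applying \autoref{thm:mms0} to $D(X)$ yields a uniform bound on $|\mu^{D(X)}_r|(B_2(p))$ and the weak convergence $\mu^{D(X)}_r \weakto 0$, so the theorem reduces to proving (i) $\eta_r(B_2(p))\le C(N,v)$ and (ii) $\eta_r \weakto \gamma(N)\haus^{N-1}\res\partial X$.

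For (i), Bishop--Gromov gives the pointwise control $\haus^N_X(A_r(x)) \le C(N)\, \omega_N r^N$, and Minkowski-type estimates arising from the codimension-one rectifiability of $\partial X$ established in \cite{BrueNaberSemola22} give $\haus^N_X(T_r\cap B_2(p)) \le C(N,v)\, r$ for all $r \in (0,1)$. For (ii), I would use that at $\haus^{N-1}$-a.e.\ $q \in \partial X$ the pointed rescalings $(X, r^{-1}\dist, r^{-N}\haus^N, q)$ pmGH-converge to $(\setR^N_+, \Leb^N, 0)$; in the half-space model $A_r(x)$ is precisely the reflection across $\partial \setR^N_+$ of $B_r(\bar x)\cap \{x_N<0\}$ (where $\bar x$ is the mirror image of $x$), hence $\haus^N_X(A_r(x))/(\omega_N r^N) = 1 - V_1(s/r)$ with $s := \dist(x,\partial X)$. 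Parametrizing $T_r$ by $(q,s) \in \partial X \times [0,r]$ through a normal-exponential map and testing against $\phi \in C_c(X)$, a coarea / Fubini argument should yield
\begin{equation*}
	\int \phi \di \eta_r \converges{r\downarrow 0} \gamma(N) \int_{\partial X} \phi \di \haus^{N-1}\, .
\end{equation*}

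The main obstacle is step (ii): upgrading the scale-by-scale pointwise convergence of $\haus^N_X(A_r(x))/(\omega_N r^N)$ into a dominated-convergence statement along the fibration over $\partial X$ requires quantitative $\eps$-regularity at boundary points and uniform control on the normal exponential. The effective boundary structure theorems of \cite{BrueNaberSemola22}, together with the already-established effective estimate of \autoref{thm:mms0} applied on $D(X)$ (which furnishes quantitative smallness of the deviation at blow-up scales uniformly in the base point), are the natural tools to close this step.
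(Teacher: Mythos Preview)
Your overall strategy---double along $\partial X$, apply \autoref{thm:mms0} to the double to kill the interior contribution, and then analyze the residual measure $\eta_r$ supported in the tubular neighbourhood---is exactly the route the paper takes (see \autoref{sec:spaces with boundary}). The decomposition $\mu_r^X=\mu_r^{D(X)}\restr_X+\eta_r$ and the identification of the density of $\eta_r$ with $1-V_1(s/r)$ in the model are both correct and appear in the paper in the same form.

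The gap is in step (ii), and you have in fact located it yourself but misidentified the tool that closes it. After the coarea computation you need
\[
\lim_{s\downarrow 0}\haus^{N-1}\bigl(\{\dist_{\partial X}=s\}\cap B_1(p)\bigr)=\haus^{N-1}\bigl(\partial X\cap B_1(p)\bigr),
\]
and this is \emph{not} a consequence of the $\eps$-regularity at boundary points from \cite{BrueNaberSemola22} together with \autoref{thm:mms0} on the double. Those give you the tubular volume bound $\haus^N(T_r\cap B_1(p))\le C(N)r$, hence only that $s\mapsto\haus^{N-1}(\Sigma_s)$ is bounded \emph{on average}; they do not control its pointwise behaviour as $s\downarrow 0$, which is what dominated convergence requires. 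The paper closes this via an additional hypothesis you have not invoked: the Laplacian comparison
\[
\Delta\dist_{\partial X}\le -\delta(N-1)\dist_{\partial X}\quad\text{on }X\setminus\partial X,
\]
valid in both settings of the theorem (classical for Alexandrov spaces, and established in \cite{BrueNaberSemola22} for noncollapsed limits of manifolds with convex boundary). Integrating this against $\nabla\dist_{\partial X}$ via Gauss--Green on the slab between two level sets yields an almost-monotonicity $f(s_1)-f(s_2)\le C(N,\delta)(s_1-s_2)$ for $f(s):=\haus^{N-1}(\Sigma_{s,\overline B_1(p)})$ (this is \autoref{lemma:almostmonotonicity}), from which the needed limit and the domination follow. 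Without this ingredient your ``uniform control on the normal exponential'' remains a wish rather than an estimate.
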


In other words, the metric measure boundary coincides with the boundary measure. We refer to \autoref{sec:spaces with boundary} for a more general statement.
\medskip

On a complete Riemannian manifold without boundary all the geodesics extend for all times, while in the presence of boundary the amount of geodesics that terminate (on the boundary) is measured by its \emph{size}. This is of course too much to hope for on general metric spaces.
However, as shown in \cite[Theorem 1.6]{LytchakKapovitchPetrunin21}, when the metric measure boundary is vanishing on an Alexandrov space with sectional curvature bounded from below, then there are \emph{many} infinite geodesics.  We refer to \cite[Section 3]{LytchakKapovitchPetrunin21} for the definitions of tangent bundle, geodesic flow and Liouville measure in the setting of Alexandrov spaces. An immediate application of \autoref{thm:mms0}, when combined with \cite[Theorem 1.6]{LytchakKapovitchPetrunin21}, is the following.

\begin{theorem}
	Let $(X,\dist)$ be an Alexandrov space with empty boundary. 
	Then almost each direction of the tangent bundle $TX$ is the starting direction of an infinite geodesic. Moreover, the geodesic flow preserves the Liouville measure on $TX$.	
\end{theorem}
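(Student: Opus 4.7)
The plan is to read the statement as an immediate corollary that glues \autoref{thm:mms0} together with \cite[Theorem 1.6]{LytchakKapovitchPetrunin21}. The whole novelty lies in \autoref{thm:mms0}; the rest is a matter of translating between categories of singular spaces.

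\textbf{Step 1 (pass to the RCD side).} Let $(X,\dist)$ be an Alexandrov space with empty boundary and let $N$ denote its Hausdorff dimension. After rescaling, we may assume the sectional curvature is $\geq -1$. By the compatibility results of Petrunin \cite{Petrunin11} and Zhang--Zhu \cite{ZhangZhu10}, the metric measure space $(X,\dist,\haus^N)$ is an $\RCD(-(N-1),N)$ space. Moreover, the closure of the top-dimensional singular stratum used to define the $\RCD$ boundary in the excerpt coincides with the classical Alexandrov boundary (points whose space of directions is a half-sphere or, equivalently, whose tangent cone is a Euclidean half-space): this matching is now well documented in the literature on boundaries of $\RCD$ spaces cited above (\cite{DePhilippisGigli18,KapovitchMondino19,BrueNaberSemola22}). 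In particular, $\partial X=\emptyset$ in the $\RCD$ sense.

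\textbf{Step 2 (apply the main theorem).} For any point $p\in X$, the ball $B_2(p)$ does not meet $\partial X$, and the volume lower bound $\haus^N(B_1(p))\geq v>0$ holds with a constant $v$ depending on $p$ by the noncollapsed hypothesis. Hence \autoref{thm:mms0} applies: the total variation measures $|\mu_r|$ are locally uniformly bounded, and $|\mu_r|(B_1(p))\to 0$ as $r\downarrow 0$. Covering $X$ by countably many such balls and using the local uniform bound to exclude escape of mass, we conclude that $\mu_r\weakto 0$ on $X$, i.e.\ $(X,\dist,\haus^N)$ has vanishing metric measure boundary in the sense of \cite{LytchakKapovitchPetrunin21}.

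\textbf{Step 3 (invoke the KLP consequence).} \cite[Theorem 1.6]{LytchakKapovitchPetrunin21} asserts precisely that, for an Alexandrov space whose metric measure boundary vanishes, the geodesic flow on the tangent bundle $TX$ preserves the Liouville measure and almost every element of $TX$ is the starting direction of an infinite geodesic. Combining with Step 2 yields both conclusions.

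\textbf{Obstacle.} There is essentially no technical difficulty: the only subtle point is the identification of the two notions of boundary (Alexandrov vs.\ $\RCD$-top-dimensional singular stratum) under the Petrunin/Zhang--Zhu embedding, which is however standard. Everything else is a direct citation.
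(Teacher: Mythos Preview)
Your proposal is correct and matches the paper's own argument essentially verbatim: the paper states the theorem as ``an immediate application of \autoref{thm:mms0}, when combined with \cite[Theorem 1.6]{LytchakKapovitchPetrunin21}'', invoking the Alexandrov--$\RCD$ compatibility from \cite{Petrunin11,ZhangZhu10} exactly as you do in Step~1. Your Steps~2 and~3 simply spell out what the paper leaves implicit, and the only point you flag as subtle (the identification of the two boundary notions) is indeed routine given the cited literature.
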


In particular, the above gives an affirmative answer to an open question raised by Perelman-Petrunin \cite{PerelmanPetrunin96} about the existence of infinite geodesics on Alexandrov spaces with empty topological boundary.

\subsection*{Outline of proof}

The main challenge in the study of the metric measure boundary is to control the mass of inner balls, i.e. balls located sufficiently far away from the boundary. This is the aim of \autoref{thm:mms0}, whose proof occupies the first five sections of this paper and requires several new ideas. 
Once \autoref{thm:mms0} is established, \autoref{thm:boundary intro} follows from a careful analysis of boundary balls. The latter is outlined in \autoref{sec:spaces with boundary}.

\medskip

	Let us now describe the proof of \autoref{thm:mms0}. Given a ball $B_1(p)\subset X$ such that $B_2(p)\cap \partial X = \emptyset$, we aim at finding uniform bounds on the family of approximating measures 
	\begin{equation}\label{eq:goal 1}
		\mu_r(B_1(p)) = \frac{1}{r}\int_{B_1(p)} \left(1-\frac{\haus^N(B_r(x))}{\omega_N r^N}\right) \di \haus^N(x) \, 
	\end{equation}
    and at showing that
    \begin{equation}\label{eq:goal 2}
    	\lim_{r\downarrow 0} \frac{1}{r}\int_{B_1(p)} \left(1-\frac{\haus^N(B_r(x))}{\omega_N r^N}\right) \di \haus^N(x)  = 0 \, .
    \end{equation}
    Morally, \eqref{eq:goal 2} amounts to say that the identity 
    \begin{equation}\label{eq: goal}
    	\frac{\haus^N(B_r(x))}{\omega_N r^N} = 1 + o(r)
    \end{equation}
    holds in average on $B_1(p)$. The Bishop-Gromov inequality says that the limit
	\begin{equation}\label{eq:volrat}
		\lim_{r\downarrow 0}\frac{\haus^N(B_r(x))}{\omega_Nr^N}
	\end{equation}
	exists for all points. Moreover, its value is $1$ if and only if $x$ is a regular point, i.e. its tangent cone is Euclidean. In particular the limit is different from $1$ only at singular points, which are a set of Hausdorff dimension less than $(N-2)$ if there is no boundary. This is a completely non trivial statement, although now classical, as it requires the volume convergence theorem and the basic regularity theory for noncollapsed spaces with lower Ricci bounds \cite{Colding97,CheegerColding97,DePhilippisGigli18}. Analogous statements were known for Alexandrov spaces with curvature bounded from below since \cite{BuragoGromovPerelman92}.

	\smallskip

	The proof of \eqref{eq:goal 1} and \eqref{eq:goal 2} is based on three main ingredients:
	\begin{itemize}
		\item[(1)] a new quantitative volume convergence result via $\delta$-splitting maps, see \autoref{prop:quantitativevolcE};
		
		\item[(2)] an $\eps$-regularity theorem, see \autoref{prop:boundary on regular balls}, stating (roughly) that for balls which are sufficiently close to the Euclidean ball in the Gromov-Hausdorff topology, the approximating measure $\mu_r$ as in \eqref{eq:defmur} is small;

		\item[(3)] a series of quantitative covering arguments.
   \end{itemize}

The first two ingredients are the main contributions of the present work. We believe that they are of independent interest and have a strong potential for future applications in the study of spaces with Ricci curvature bounded below.

The ingredient (3) comes from the recent \cite{BrueNaberSemola22}, see \autoref{thm:decompositiontheorem} for the precise statement and \cite{JiangNaber16,CheegerJiangNaber21,LytchakKapovitchPetrunin21,LiNaber} for earlier versions in different contexts. It is used to globalize local bounds obtained out of (2) by summing up good scale invariant bounds on almost Euclidean balls.

\subsubsection*{Quantitative volume convergence}

The starting point of our analysis is (1). It provides a quantitative control on the volume of almost Euclidean balls, i.e. balls $B_1(p)\subset X$ such that 
\begin{equation}
	\dist_{GH}\left(B_5(p), B_5^{\setR^N}(0)\right) \le \delta \ll 1 \, ,
\end{equation}
in terms of $\delta$-splitting maps $u:B_5(p)\to\setR^N$. The latter are integrally good approximations of the canonical coordinates of $\setR^N$ satisfying
\begin{equation}
	\Delta u_i=0\, ,\quad \fint_{B_5(p)}\abs{\nabla u_i\cdot\nabla u_j-\delta_{ij}}\di\haus^N\le \delta\, ,\quad\fint_{B_5(p)}\abs{\Hess u_i}^2\di\haus^N\le \delta\, ,
\end{equation}
see \cite{CheegerColding96,CheegerColding97,CheegerNaber15,CheegerJiangNaber21} for the theory on smooth manifolds and Ricci limit spaces and the subsequent \cite{BrueNaberSemola22} for the present setting. The key inequality proven in \autoref{prop:quantitativevolcE} reads as
\begin{equation}\label{eq:volume conv intro}
	\begin{split}
	  	\abs{1-
	  		\frac{\haus^N(B_r(x))}{\omega_N r^N}}
	  	\le C(N)\left( r^2 
	  	+\int_0^r\fint_{B_{4t}(x)}\abs{\nabla u_i\cdot\nabla u_j-\delta_{ij}}\di\haus^N\frac{\di t}{t}
	  	\right)\, ,
	\end{split}
\end{equation}
at any regular point $x\in B_5(p)$, for any $r<5$. The term appearing in the right hand side measures to what extent $u:B_5(p)\to \setR^N$ well-approximates the Euclidean coordinates at any scale $r\in (0,5)$ around $x$. 

\medskip

In order to prove \eqref{eq:volume conv intro}, we use the components of the splitting map to construct an approximate solution of the equations
\begin{equation}\label{eq:functcone1}
	\Delta r^2=2N\, ,\quad \abs{\nabla r}=1\, ,
\end{equation}
with $r\ge 0$ and $r(x)=0$. The approximate solution is obtained as $r^2:=\sum_iu_i^2$, after normalizing so that $u(x)=0$, and the right hand side in \eqref{eq:rate} controls the precision of this approximation, see \autoref{lemma:bound r}. Then the idea is that when $\Ric\ge 0$ the existence of a solution of \eqref{eq:functcone1} would force the volume ratio to be constant along scales. In \autoref{lemma:derv} we prove an effective version of this where errors are taken into account quantitatively. 
\medskip

We remark that, following the proofs of the volume convergence in \cite{Colding97,CheegerColding2000b,Cheeger01}, one would get an estimate
\begin{equation}
\abs{1-
	  		\frac{\haus^N(B_r(x))}{\omega_N r^N}}
	  	\le C(N)\left(r^2+\fint_{B_{4r}(x)}\abs{\nabla u_i\cdot\nabla u_j-\delta_{ij}}\di\haus^N\right)^{\alpha(N)}\, 
\end{equation}
for some $\alpha(N)<1$, while for our applications it is fundamental to have a linear dependence at the right hand side. This is achieved by estimating the derivative at any scale,
     \begin{align}
  \nonumber	- \frac{\di}{\di t} \left( \frac{\haus^{N}(B_t(x))}{t^N}   \right)
     	\le& 
     	C(N)t^{-N} \int_{\partial B_t(x)} \abs{\nabla u_i\cdot\nabla u_j-\delta_{ij}} \di \haus^{N-1} \\
	&+ C(N) t^{-N - 1}\int_{B_{4t}(x)} \abs{\nabla u_i\cdot\nabla u_j-\delta_{ij}} \di \haus^N \, ,
     \end{align}
    see \autoref{cor:gapBGharm},  and then integrating with respect to the scale. The improved dependence comes at the price of considering a multi-scale object at the right hand side.

%

\subsubsection*{A new $\eps$-regularity theorem}
Let us now outline the ingredient (2).
The $\eps$-regularity theorem, \autoref{prop:boundary on regular balls}, amounts to show that the scale invariant volume ratio in \eqref{eq:volrat} converges to $1$ at the quantitative rate $o(r)$ \emph{in average} on a ball $B_{10}(p)$ which is sufficiently close to the Euclidean ball $B^{\setR^N}_{10}(0)\subset\setR^N$ in the Gromov-Hausdorff sense.\\ 
In order to prove it, we employ the quantitative volume bound \eqref{eq:volume conv intro}.
There are two key points to take into account dealing with harmonic splitting maps in the present setting:
\begin{itemize}
\item[(a)] they cannot be bi-Lipschitz in general, as they do not remain $\delta$-splitting maps when restricted to smaller balls $B_r(x)\subset B_5(p)$.
\item[(b)] they have good $L^2$ integral controls on their Hessians.
\end{itemize}
This is in contrast with distance coordinates in Alexandrov geometry, that are biLipschitz but have good controls only on the total variation of their measure valued Hessians, see \cite{Perelman95}. On the one hand, (a) makes controlling the metric measure boundary much more delicate than in the Alexandrov case. On the other hand, (b) is where the crucial gain with respect to the previous \cite{LytchakKapovitchPetrunin21} appears. Indeed, the $L^p$ integrability for $p\ge 1$ allows to show that the metric measure boundary cannot concentrate on a set negligible with respect to $\haus^N$. At this point, it will be sufficient to prove that the rate of convergence to $1$ in \eqref{eq:volrat} is $o(r)$ at $\haus^N$-a.e. point.

The key observation to deal with (a) is that, even though a splitting map can degenerate, it remains quantitatively well behaved away from a set $E\subset B_5(p)$ for which there exists a covering
\begin{equation}\label{eq:content}
E\subset \bigcup_{i}B_{r_i}(x_i)\, ,\quad \text{with}\quad \sum_ir_i^{N-1}\le \delta'\, ,
\end{equation}
and $\delta'\to 0$ as $\delta\to 0$. Moreover, on $B_5(p)\setminus E$ the splitting map becomes polynomially better and better when restricted to smaller balls, after composition with a linear transformation close to the identity in the image. Namely there exists a linear application $A_x:\setR^N\to\setR^N$ with $\abs{A_x-\mathrm{Id}}\le C(N)\delta'$ for which, setting $v:=A_x\circ u:B_5(p)\to\setR^N$, it holds 
\begin{equation}\label{eq:rate}
\fint_{B_r(x)}\abs{\nabla v_i\cdot\nabla v_j-\delta_{ij}}\di\haus^N\le C(N)rf(x)\, ,\quad\text{for any $0<r<1$}\, ,
\end{equation} 
for some integrable function $f:B_5(p)\setminus E\to[0,\infty)$. The strategy is borrowed from \cite{BrueNaberSemola22}, it is based on a weighted maximal function argument and a telescopic estimate, building on top of the Poincar\'e inequality, and it heavily exploits the $L^2$-Hessian bounds for splitting maps. The small content bound \eqref{eq:content} allows the construction to be iterated on the bad balls $B_{r_i}(x_i)$ and the results to be summed up into a geometric series.

\smallskip

In order to control the approximating measure $\mu_r$ on almost Euclidean balls, it is enough to plug \eqref{eq:rate} into the quantitative volume bound \eqref{eq:volume conv intro}.

To prove that the metric measure boundary is vanishing we need to show that at $\haus^N$-almost any point it is possible to slightly perturb the map $v$ above so that, morally, $f(x)=0$.
To this aim we perturb the splitting function $v$ at the second order so that, roughly speaking, it has vanishing Hessian at a fixed point $x$. The idea is to use a quadratic polynomial in the components of $v$ to make the second order terms in the Taylor expansion of $v$ at $x$ vanish. However its implementation is technically demanding and it requires the second order differential calculus on $\RCD$ spaces developed in \cite{Gigli18}. The construction is of independent interest and it plays the role of \cite[Lemma 6.2]{LytchakKapovitchPetrunin21} (see also \cite{Perelman95}) in the present setting. 

\subsection*{Acknowledgements}
The first named author is supported by  the Giorgio and Elena Petronio Fellowship at the Institute for Advanced Study. The second and the last named authors are supported by the European Research Council (ERC), under the European Union Horizon 2020 research and innovation programme, via the ERC Starting Grant  “CURVATURE”, grant agreement No. 802689.\\ 
The last author is grateful to Alexander Lytchak for inspiring correspondence about the topics of the present work.

\section{Metric measure boundary and regular balls}
The aim of this section is to prove \autoref{thm:mms0} by assuming the following $\eps$-regularity theorem. The latter provides effective controls on the boundary measure for \textit{regular balls}. Here and in the following, we say that a ball $B_r(p)$ of an $\RCD(-(N-1),N)$ space is $\delta$-regular if 
\begin{equation}
	\dist_{GH}\left(B_{r}(p),B_{r}^{\setR^N}(0)\right) \le \delta r \, .
\end{equation}

\begin{theorem}[$\eps$-regularity]
	\label{prop:boundary on regular balls}
 For every $\eps>0$ and $N\in \mathbb{N}_{\geq 1}$, there exists $\delta(N,\eps)>0$ such that for all $\delta<\delta(N,\eps)$ the following holds.
	If $(X,\dist,\haus^N)$ is an $\RCD(-\delta (N-1),N)$ space, $p\in X$, and $B_{10}(p)$ is $\delta$-regular, then
	\begin{equation}\label{eq:uniformboundsep}
		|\mu_r|(B_1(p)) \le \eps \, , \quad
		\text{for any $r\in (0,1)$} \, .
	\end{equation}
	Moreover, $\abs{\mu_r}(B_1(p))\to 0$ as $r\downarrow 0$.
\end{theorem}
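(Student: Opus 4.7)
The plan is to combine three tools: the GH-closeness hypothesis, the quantitative volume bound \eqref{eq:volume conv intro}, and the iterative transformation argument sketched as ingredient (2) of the outline. First I would extract from $\delta$-regularity a harmonic $\delta'$-splitting map $u:B_5(p)\to\setR^N$ with $\delta'=\delta'(\delta)\downarrow 0$ as $\delta\downarrow 0$: this is standard, obtained by solving the Dirichlet problem for GH-approximating coordinates and appealing to the $L^2$-Hessian estimates and volume rigidity available in the noncollapsed $\RCD$ setting.

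Next I would apply the Brue-Naber-Semola transformation argument and iterate it on the bad balls. Its output is a set $E_\infty\subset B_1(p)$ with $\haus^{N-1}(E_\infty)=0$, a function $f:B_1(p)\setminus E_\infty\to[0,\infty)$ with $\|f\|_{L^1}\le\eta(\delta)$, $\eta(\delta)\downarrow 0$, and for each $x\in B_1(p)\setminus E_\infty$ a linear map $A_x:\setR^N\to\setR^N$ with $|A_x-\mathrm{Id}|\le\eta(\delta)$ such that, setting $v_x:=A_x\circ u$,
\[
\fint_{B_s(x)}\bigl|\nabla v_{x,i}\cdot\nabla v_{x,j}-\delta_{ij}\bigr|\di\haus^N\le C(N)\,s\,f(x),\qquad 0<s<1.
\]
Plugging this into \eqref{eq:volume conv intro} (using that pre-composition with $A_x$ perturbs the estimate by at most $O(\eta(\delta))$, and that under $\RCD(-\delta(N-1),N)$ the curvature-induced quadratic term acquires a coefficient $O(\delta)$ via Bishop-Gromov) and integrating $\di t/t$ on $(0,r)$ yields
\[
\Bigl|1-\frac{\haus^N(B_r(x))}{\omega_N r^N}\Bigr|\le C(N)\bigl(\delta r^2+rf(x)+\eta(\delta)r\bigr),\qquad x\in B_1(p)\setminus E_\infty,\ r\in(0,1).
\]
Dividing by $r$ and integrating over $B_1(p)\setminus E_\infty$ (whose complement in $B_1(p)$ is $\haus^N$-negligible) gives the uniform bound $|\mu_r|(B_1(p))\le C(N)(\delta+\|f\|_{L^1}+\eta(\delta))\le C(N)\eta(\delta)$, which is $\le\eps$ for $\delta=\delta(N,\eps)$ small enough, proving \eqref{eq:uniformboundsep}.

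For the second conclusion $|\mu_r|(B_1(p))\to 0$, I would invoke dominated convergence (with the uniform bound just derived as domination) together with the $\haus^N$-a.e. pointwise vanishing
\[
\lim_{r\downarrow 0}\frac{1}{r}\Bigl|1-\frac{\haus^N(B_r(x))}{\omega_N r^N}\Bigr|=0.
\]
The improvement from $O(r)$ to $o(r)$ at a.e. $x$ would be obtained by perturbing $v_x$ at second order: one adds to $v_x$ a quadratic polynomial in its components, chosen so that the Hessian of the resulting map $w$ vanishes at $x$. A Poincar\'e-type argument then yields $\fint_{B_s(x)}|\nabla w_i\cdot\nabla w_j-\delta_{ij}|\di\haus^N=o(s)$ as $s\downarrow 0$, so that applying \eqref{eq:volume conv intro} to $w$ gives the desired $o(r)$ decay at $\haus^N$-a.e. point.

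The main obstacle is this last step. The quadratic correction must be constructed rigorously within the second-order differential calculus on $\RCD$ spaces \`a la Gigli, and one must verify the $o(s)$ decay of its Hessian averages at $\haus^N$-almost every point. The Alexandrov argument of Lytchak-Kapovitch-Petrunin relies on biLipschitz distance coordinates with measure-valued Hessians, neither of which is available in the $\RCD$ framework; instead one works with the $L^2$-Hessian bound of a harmonic splitting map and develops a new second-order construction. A secondary but routine concern is to carry out the iteration of the Brue-Naber-Semola transformation without losing the linear-in-$s$ growth of the Hessian averages on the good set.
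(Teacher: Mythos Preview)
Your proposal identifies the correct ingredients --- the harmonic $\delta'$-splitting map, the quantitative volume bound \eqref{eq:volume conv intro}, the transformation/weighted-maximal-function argument, the $(N-1)$-content estimate on the bad set, and the second-order correction behind the pointwise $o(r)$ decay --- and is close in spirit to the paper's argument. But there is a real gap in how you package the iteration, and it is not the ``secondary, routine concern'' you flag at the end.

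You assert that iterating on the bad balls produces $E_\infty$ with $\haus^{N-1}(E_\infty)=0$ and a single $f\in L^1$ such that for \emph{every} $x\in B_1(p)\setminus E_\infty$ and \emph{every} $0<s<1$,
\[
\fint_{B_s(x)}\bigl|\nabla v_{x,i}\cdot\nabla v_{x,j}-\delta_{ij}\bigr|\di\haus^N \le C(N)\,s\,f(x).
\]
This is too strong. If $x$ lies in a level-$1$ bad ball $B_{\rho}(y)$ and only enters a good set at some deeper level, the splitting map produced at that level lives on $B_\rho(y)$, and the linear-in-$s$ estimate it yields is valid only for $0<s<\rho$. For $s>\rho$ the original map $u$ has already degenerated at $x$ --- that is precisely why $x$ was bad at level $1$ --- and no linear bound is available there. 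Hence the pointwise domination $\frac{1}{r}\bigl|1-\haus^N(B_r(x))/(\omega_N r^N)\bigr|\le C(N)(f(x)+\eta(\delta))$ cannot hold uniformly in $r\in(0,1)$, and both your integrated uniform bound and your dominated-convergence step break down as written.

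The paper resolves this by \emph{fixing} $r$ first and iterating only finitely many times (the \emph{Iteration Lemma}, \autoref{lemma:iteration}): at each stage the remaining bad balls are split into those of radius $>r$, on which the single-step lemma is re-applied, and those of radius $\le r$, on which the trivial estimate $|\mu_r|(B_\rho(y))\le C(N)\rho^N/r\le C(N)\rho^{N-1}$ suffices. Since radii shrink geometrically the process terminates, and the total bound is a convergent geometric series in $\eps$. For $|\mu_r|(B_1(p))\to 0$, the paper does \emph{not} run a single dominated convergence on all of $B_1(p)$: it proves $|\mu_r|(E)\to 0$ only on the level-$1$ good set $E$ (there $M_{10}|\Hess u|$ is the integrable dominant and \autoref{prop:or} supplies the pointwise limit), then passes to any weak limit $\mu$ of $|\mu_{r_i}|$ and shows $\mu(B_1(p))=0$ by iterating the single-step lemma directly on $\mu$. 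Your scheme can be repaired along these lines, but the uniform-in-$s$ claim is the missing piece, not a routine detail.
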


\subsection{Proof of \autoref{thm:mms0}}

We combine the $\eps$-regularity result \autoref{prop:boundary on regular balls} with the quantitative covering argument \cite[Theorem 5.2]{BrueNaberSemola22}. We also refer the reader to the previous works \cite{JiangNaber16,CheegerJiangNaber21} where this type of quantitative covering arguments originate from, and to \cite{LiNaber,LytchakKapovitchPetrunin21} for similar results in the setting of Alexandrov spaces. 
\smallskip

We recall that $B_{r}(p)$ is said to be a $\eta$-boundary ball provided
\begin{equation}
	\dist_{GH}\left(B_r(p), B_r^{\setR^N_+}(0)\right)\le \eta r \, ,
\end{equation}
where we denoted by $\setR^N_+$ the Euclidean half-space of dimension $N$ with canonical metric.

\begin{theorem}[Boundary-Interior decomposition theorem]\label{thm:decompositiontheorem}
	For any $\eta>0$ and $\RCD(-(N-1),N)$ m.m.s. $(X,\dist,\haus^N)$ with $p\in X$ such that $\haus^N(B_1(p))\ge v$, there exists a decomposition
	\begin{equation}
		B_1(p)\subset\bigcup_{a}B_{r_a}(x_a)\cup\bigcup_{b}B_{r_b}(x_b)\cup\tilde{\mathcal{S}}\, ,
	\end{equation}
	such that the following hold:
	\begin{itemize}
		\item[i)] the balls $B_{20r_a}(x_a)$ are $\eta$-boundary balls and $r_a^2\le \eta$;
		\item[ii)] the balls $B_{20r_b}(x_b)$ are $\eta$-regular and $r_b^2\le \eta$;
		\item[iii)]  $\haus^{N-1}(\tilde{\mathcal{S}})=0$;
		\item[iv)] $\sum_br_b^{N-1}\le C(N,v,\eta)$;
		\item[v)] $\sum_ar_a^{N-1}\le C(N,v)$.
	\end{itemize}
\end{theorem}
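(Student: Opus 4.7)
The plan is to combine a stopping-time construction with a Vitali covering, and then bound separately the $(N-1)$-content of the boundary-type balls (against the $\haus^{N-1}$-measure of $\partial X$) and of the regular-type balls (against the top-codimension quantitative singular stratum $\mathcal{S}^{N-2}_{\eta',r}$). This follows the quantitative covering strategy of \cite{JiangNaber16, CheegerJiangNaber21, BrueNaberSemola22, LiNaber, LytchakKapovitchPetrunin21}.

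The first step is to identify the residual set. By the structure theorem for noncollapsed $\RCD(-(N-1),N)$ spaces \cite{DePhilippisGigli18, KapovitchMondino19, BrueNaberSemola22}, the singular stratum $\mathcal{S}^{N-2}(X)$ has Hausdorff dimension at most $N-2$, hence $\haus^{N-1}(\mathcal{S}^{N-2}) = 0$. For $x \in B_1(p) \setminus \mathcal{S}^{N-2}$, every tangent cone at $x$ splits off an $\setR^{N-1}$ factor and, being noncollapsed of total dimension $N$, must equal $\setR^N$ or $\setR^N_+$; thus there exist arbitrarily small scales at which the ball around $x$ is $\eta$-regular or $\eta$-boundary. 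For such $x$ I would set
\begin{equation*}
r_x := \sup \left\{r \in (0, \sqrt{\eta}/C] : B_{100\,r}(x) \text{ is $\eta$-regular or $\eta$-boundary}\right\} > 0,
\end{equation*}
and put $\tilde{\mathcal{S}} := B_1(p) \cap \mathcal{S}^{N-2}$, securing (iii). The Vitali $5B$-covering lemma applied to $\{B_{r_x}(x)\}_{x \in B_1(p) \setminus \tilde{\mathcal{S}}}$ and the relabeling $\rho_i := 5 r_{x_i}$ then produce a covering $\bigcup_i B_{\rho_i}(x_i) \supset B_1(p) \setminus \tilde{\mathcal{S}}$ with $B_{20\rho_i}(x_i) = B_{100 r_{x_i}}(x_i)$ an $\eta$-regular or $\eta$-boundary ball and $\rho_i^2 \leq \eta$; splitting the index set accordingly yields (i)--(ii).

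For the boundary-ball bound (v), each $\eta$-boundary ball $B_{20 r_a}(x_a)$ contains (for $\eta$ sufficiently small) a point $y_a \in \partial X$ with $\dist(x_a,y_a) \leq \eta r_a$. The $\haus^{N-1}$-Ahlfors regularity $\haus^{N-1}(\partial X \cap B_{r_a}(y_a)) \geq c(N)\, r_a^{N-1}$ and the upper Minkowski bound $\haus^{N-1}(\partial X \cap B_2(p)) \leq C(N,v)$ from \cite{BrueNaberSemola22} (see also \cite{DePhilippisGigli18, KapovitchMondino19}), combined with disjointness of the $B_{r_a}(x_a)$, give $\sum_a r_a^{N-1} \leq C(N,v)$. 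For the regular-ball bound (iv), maximality of $r_b$ forces $B_{100\, r'}(x_b)$ to be neither $\eta$-regular nor $\eta$-boundary for every $r' \in (r_b, \sqrt{\eta}/C]$. In the noncollapsed class a compactness argument (using that the only $1$-dimensional noncollapsed cones are $\setR$ and $\setR_+$) upgrades this to membership of $x_b$ in a quantitative top-codimension stratum $\mathcal{S}^{N-2}_{\eta', r_b, \sqrt{\eta}/C}$ for some $\eta' = \eta'(\eta,N) > 0$. The quantitative stratification estimate of \cite{BrueNaberSemola22, CheegerJiangNaber21} covers this stratum by at most $C(N,v,\eta)\, r_b^{-(N-2)}$ balls of radius $r_b$, so disjointness gives
\begin{equation*}
\#\{b : r_b \in [2^{-(k+1)}, 2^{-k}]\} \leq C(N,v,\eta)\, 2^{k(N-2)},
\end{equation*}
and the dyadic sum
\begin{equation*}
\sum_b r_b^{N-1} \leq \sum_{k\, :\, 2^{-k} \leq \sqrt{\eta}/C} C(N,v,\eta)\, 2^{k(N-2)} \cdot 2^{-k(N-1)} \leq C(N,v,\eta)
\end{equation*}
closes (iv).

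The main technical obstacle is the regular-ball bound. It rests on two nontrivial inputs: (a) the quantitative equivalence between "no $\eta$-regular or $\eta$-boundary scale in an interval" and "membership in $\mathcal{S}^{N-2}_{\eta', \cdot}$", which must survive passage to pmGH limits within the noncollapsed class; and (b) the Minkowski-type covering estimate for $\mathcal{S}^{N-2}_{\eta', r}$, the deepest output of \cite{BrueNaberSemola22}, ultimately built on the Cheeger--Naber harmonic splitting machinery and the boundary neck-decomposition of that work.
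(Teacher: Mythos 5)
Your proposal is correct in outline and follows essentially the same route the paper relies on: the paper does not prove this theorem itself but quotes it as a minor variant of \cite[Theorem 5.2]{BrueNaberSemola22} (with $B_{20r_b}(x_b)$ in place of $B_{2r_b}(x_b)$), noting it follows from the very same quantitative covering strategy --- a stopping time over scales at which balls are neither $\eta$-regular nor $\eta$-boundary, content/quantitative-stratification bounds for the effective $(N-2)$-stratum, and the boundary-measure Ahlfors regularity and finiteness from \cite{BrueNaberSemola22} --- which is exactly what you sketch. Only cosmetic slips remain (outside $\mathcal{S}^{N-2}$ one only knows that \emph{some} tangent cone splits off $\setR^{N-1}$, which suffices; the supremum defining $r_x$ should be handled by taking a nearly maximal good scale; and the pairwise disjoint balls produced by Vitali are the $B_{r_a/5}(x_a)$, not the $B_{r_a}(x_a)$), none of which affects the argument.
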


We point out that the statement of \autoref{thm:decompositiontheorem} is slightly different from the original one in \cite[Theorem 5.2]{BrueNaberSemola22} as we claim that the balls $B_{20r_b}(x_b)$ are $\eta$-regular, rather than considering the balls $B_{2r_b}(x_b)$. This minor variant follows from the very same strategy.

\medskip

Let us now prove the effective bound \eqref{eq:bound effective}.
Fix $\eta<1/4$. We apply \autoref{thm:decompositiontheorem} to find the cover
\begin{equation}
	B_1(p)\subset \bigcup_bB_{r_b}(x_b)\cup \tilde{\mathcal{S}}\, ,
\end{equation}
where $\haus^{N-1}(\tilde{\mathcal{S}})=0$, the balls $B_{20r_b}(x_b)$ are $\eta$-regular with $r_b^2\le \eta$ for any $b$, and 
\begin{equation}\label{eq:rdN-1C}
	\sum_br_b^{N-1}\le C(N,v,\eta)\, .
\end{equation}
Notice that boundary balls do not appear in the decomposition as we are assuming that
	\begin{equation}\label{z18}
	\partial X \cap B_2(p)=\emptyset \, ,
	\qquad
	\eta<1/4 \, .
	\end{equation}
	Indeed, any boundary ball intersects $\partial X$ as a consequence of \cite[Theorem 1.2]{BrueNaberSemola22}. Hence if a boundary ball appears in the decomposition, then  $B_{r_a}(x_a)\cap B_1(p)\neq \emptyset $ and $B_{r_a}(x_a)\subset B_2(p)$, contradicting \eqref{z18}.

\medskip

We fix $\eps=1/10$ and choose $\eta: = \delta(N,1/10)$ given by \autoref{prop:boundary on regular balls}. Then we estimate 
\begin{equation}\label{eq:cov}
	\abs{\mu_r}(B_1(p))\le \sum_b \abs{\mu_r}(B_{r_b}(x_b))\, ,
\end{equation}
by distinguishing two cases: if $r<r_b$, then the scale invariant version of \autoref{prop:boundary on regular balls} applies yielding
\begin{equation}\label{eq:r<}
	\abs{\mu_r}(B_{r_b}(x_b))\le \frac{1}{10}r_b^{N-1}\, .
\end{equation}
If $r>r_b$, then it is elementary to estimate
\begin{equation}\label{eq:r>}
	\begin{split}
		\abs{\mu_r}(B_{r_b}(x_b))
		\le &\int_{B_{r_b}(x_b)}\frac{1}{r}\abs{1-\frac{\haus^N(B_r(x))}{\omega_Nr^N}}\di\haus^N(x)\\
		\le & C(N,v)\frac{r_b^N}{r}\\
		\le &C(N,v)r_b^{N-1}\, .
	\end{split}
\end{equation}
The combination of \eqref{eq:rdN-1C}, \eqref{eq:cov}, \eqref{eq:r<} and \eqref{eq:r>} shows that 
\begin{equation}\label{eq:finite}
	\abs{\mu_r}(B_1(p))\le C(N,v)\sum_br_b^{N-1}\le C(N,v)\, ,
\end{equation}
as we claimed.

\medskip

	We finally prove that $|\mu_r|(B_1(p))\to 0$ as $r\downarrow 0$ by employing (the scaling invariant version of) \eqref{eq:finite}. We appeal once more to the covering $\{B_{r_b}(x_b)\}_{b\in \mathbb{N}}$. For any $M>1$ we write
	\begin{equation}\label{z1}
		|\mu_r|(B_1(p)) \le \sum_{b\le M} |\mu_r|(B_{r_b}(x_b)) + \sum_{b > M} |\mu_r|(B_{r_b}(x_b)) \, .
	\end{equation}
	Thanks to \eqref{eq:finite}, we can estimate
	\begin{equation}\label{z2}
		\sum_{b > M} |\mu_r|(B_{r_b}(x_b))
		\le C(N,v) \sum_{b>M} r_b^{N-1} \, .
	\end{equation}
	By using that $|\mu_r(B_t(x))|\to 0$ as $r\downarrow 0$ when $B_t(x)$ is a $\delta(N,1/10)$-regular ball (see \autoref{prop:boundary on regular balls}), we get
	\begin{equation}\label{z3}
		\lim_{r\downarrow 0} \sum_{b\le M} |\mu_r|(B_{r_b}(x_b)) = 0 \, .
	\end{equation}
	The sought conclusion follows by combining \eqref{z1}, \eqref{z2}, \eqref{z3} and sending $M\to \infty$.

\section{Quantitative volume convergence via splitting maps}

It is a classical fact \cite{CheegerColding96,Colding97,Cheeger01,DePhilippisGigli18,BrueNaberSemola22} that for any $\eps>0$ there exists $\delta=\delta(\eps,N)>0$ such that the following holds: If $(X,\dist,\meas)$ is an $\RCD(-\delta^2(N-1),N)$ metric measure space and $u:B_{10}(p)\to\setR^N$ is a $\delta^2$-splitting map (see \autoref{def:deltasplitting} below), then
\begin{equation}\label{eq:volume bound}
	\dist_{GH}\left(B_1(p),B_1^{\setR^N}(0)\right) <\eps\, ,\quad\abs{\haus^N(B_1(p))-\omega_N} < \eps\, .
\end{equation}
The main result of this section is a quantitative version of \eqref{eq:volume bound} where $\eps$ is estimated explicitly in terms of $C(N)$ and a power of $\delta$. Before stating it, we recall the definition of $\delta$-splitting map and we introduce the relevant terminology.

\medskip

Given an $\RCD(-(N-1),N)$ space $(X,\dist,\haus^N)$, $p\in X$, and a harmonic map $u:B_{10}(p) \to \setR^N$, we define $\mathcal{E}:B_{10}(p)\to [0,\infty)$ by
\begin{equation}\label{eq:defE}
	\mathcal{E}(x):= \sum_{i, j} |\nabla u_i(x) \cdot \nabla u_j(x) - \delta_{ij}| \, .
\end{equation}

\begin{definition}\label{def:deltasplitting}
	Let $(X,\dist,\haus^N)$ be an $\RCD(-(N-1),N)$ metric measure space and fix $p\in X$. We say that a harmonic map $u:B_{10}(p)\to \setR^N$ is a $\delta$-splitting map provided
	\begin{equation}\label{eq:defsplitting}
		\delta^2:=\fint_{B_{10}(p)} \mathcal{E} \di \haus^N  \le  1 \, .
	\end{equation}
\end{definition}

We refer to \cite{CheegerNaber15,CheegerJiangNaber21,BrueNaberSemola22} for related results about harmonic splitting maps on spaces with lower Ricci bounds.

	\begin{proposition}[Quantitative volume convergence]
	\label{prop:quantitativevolcE}
	For every $N\in \mathbb{N}_{\geq 1}$, there exists a constant $C(N)>0$ such that the following holds. Let $(X,\dist,\haus^N)$ be an $\RCD(-(N-1),N)$ metric measure space. Let $p\in X$ and let $u:B_{10}(p)\to\setR^N$ be a harmonic $\delta$-splitting map, for some $\delta\le \delta(N)$. Then for any $x\in B_4(p)$ it holds 
	\begin{equation}\label{eq:integratedboundpr}
		\abs{1-
			\frac{\haus^N(B_r(x))}{\omega_N r^N}}
		\le C(N)\left( r^2
		+\int_0^r\fint_{B_{4t}(x)}\mathcal{E}\di\haus^N\frac{\di t}{t}
		\right)\, ,
	\end{equation}
	for any $0<r<1$.
\end{proposition}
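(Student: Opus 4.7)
The strategy, anticipated in the introduction, is to use the components of $u$ to build an approximate radial function centered at $x$ and then convert pointwise identities for it into a scale-wise differential inequality for $V(t):=\haus^N(B_t(x))/(\omega_N t^N)$, which can then be integrated across scales $t\in(0,r)$. After translating so that $u(x)=0$, I introduce $b:=\tfrac{1}{2}\sum_i u_i^2$, the analogue of $\tfrac12|\cdot-x|^2$ on Euclidean space. Harmonicity of the coordinates gives
\begin{equation*}
\Delta b=\sum_i|\nabla u_i|^2\, ,\qquad |\nabla b|^2=\sum_{i,j}u_iu_j\,\nabla u_i\cdot\nabla u_j\, ,
\end{equation*}
so pointwise $|\Delta b-N|\le \mathcal{E}$ and $\bigl||\nabla b|^2-2b\bigr|\le 2b\,\mathcal{E}$. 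These two bounds are the source of all subsequent error estimates and constitute the preparatory lemma labelled "bound $r$" in the introduction.

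Next I would convert these pointwise controls into the scale-wise estimate flagged in the introduction. Over $\setR^N$, $\Delta(|\cdot-x|^2/2)=N$ yields, via the divergence theorem on $B_t(x)$, the identity $N\haus^N(B_t(x))=t\haus^{N-1}(\partial B_t(x))$, which forces $V$ to be constant. Replacing $|\cdot-x|^2/2$ by $b$ in the Gauss--Green formula on distance balls (available on $\RCD(K,N)$ spaces through the BV/perimeter calculus), using $\partial_\nu b\le|\nabla b|\le\sqrt{2b(1+\mathcal{E})}$, and combining with the coarea identity $(d/dt)\haus^N(B_t(x))=\haus^{N-1}(\partial B_t(x))$, produces
\begin{align*}
-\frac{d}{dt}\!\left(\frac{\haus^N(B_t(x))}{t^N}\right)
\le{}& C(N)\,t^{-N}\!\int_{\partial B_t(x)}\!\mathcal{E}\,\di\haus^{N-1}\\
&{}+C(N)\,t^{-N-1}\!\int_{B_{4t}(x)}\!\mathcal{E}\,\di\haus^N\, .
\end{align*}
The enlargement $B_t(x)\to B_{4t}(x)$ enters because the splitting bound is only integral in nature: converting the pointwise inequality $\sqrt{2b}\le t(1+\mathcal{E})$ into usable information on the level set $\{\sqrt{2b}=t\}$ requires a Poincar\'e/telescopic argument on a slightly larger ball.

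The last step is to integrate this inequality from $0^+$ to $r$. A Fubini reshuffle converts the boundary term on the right-hand side into a bulk integral, producing the multi-scale quantity $\int_0^r\fint_{B_{4t}(x)}\mathcal{E}\,\di\haus^N\,\tfrac{\di t}{t}$. The complementary estimate $V(t)\le V(0^+)(1+C(N)t^2)$, a consequence of the one-sided Bishop--Gromov monotonicity under $\RCD(-(N-1),N)$, supplies the $r^2$ term and, together with the lower bound just obtained, pins $V(r)$ down within the asserted distance of $1$. The main obstacle I anticipate is preserving the \emph{linear} dependence on $\mathcal{E}$ throughout: a naive use of Cauchy--Schwarz or H\"older, either at the Gauss--Green step or in the control of $\sqrt{2b}$ on $\partial B_t(x)$, would introduce a fractional power $\mathcal{E}^{\alpha(N)}$ with $\alpha(N)<1$, as in the classical volume convergence proofs of Colding and Cheeger \cite{Colding97,Cheeger01}; this would destroy the multi-scale integrability required by the $\eps$-regularity step \autoref{prop:boundary on regular balls}. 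Avoiding this loss requires absorbing the geometric factor $\sqrt{2b}\le(1+o(1))t$ directly against the scale $t$, rather than separating it from the error through inequalities that lose powers.
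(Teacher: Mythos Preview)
Your proposal is correct and essentially identical to the paper's argument: the paper also sets $r^2=2b=\sum_i u_i^2$, proves $\bigl||\nabla r|^2-1\bigr|+\bigl|\Delta r^2-2N\bigr|\le 3\mathcal{E}$ (\autoref{lemma:bound r}), derives the scale-wise differential inequality via Gauss--Green on the distance ball $B_t(x)$ (\autoref{lemma:derv} and \autoref{cor:gapBGharm}), integrates in $t$ with a Fubini/coarea reshuffle, and appends the Bishop--Gromov comparison \eqref{eq:lower}--\eqref{eq:lower2} for the $r^2$ term. The only organizational difference is that the paper packages the Gauss--Green step as the nonnegativity of $\int_{\partial B_t(x)} r\,|\nabla(\dist_x-r)|^2\,\di\haus^{N-1}$, whereas you apply divergence to $\nabla b$ directly and use $\partial_\nu b\le |\nabla b|$; the two are algebraically equivalent.

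One point needs correction. The enlargement $B_t(x)\to B_{4t}(x)$ does \emph{not} come from a Poincar\'e/telescopic argument, and your ``pointwise inequality $\sqrt{2b}\le t(1+\mathcal{E})$'' is not meaningful as written (which $\mathcal{E}(y)$?). What is actually needed is control of $\int_{\partial B_t(x)} r\,\di\haus^{N-1}-t\,\haus^{N-1}(\partial B_t(x))$, and the paper obtains this by localizing the sharp Lipschitz bound \eqref{eq:(i)} to the scale $4t$: since $u$ restricted to $B_{4t}(x)$ is a $\bigl(\fint_{B_{4t}(x)}\mathcal{E}\bigr)^{1/2}$-splitting map, one has $\|\nabla u\|_{L^\infty(B_{2t}(x))}\le 1+C(N)\fint_{B_{4t}(x)}\mathcal{E}$, hence $r(y)\le \bigl(1+C(N)\fint_{B_{4t}(x)}\mathcal{E}\bigr)\dist(x,y)$ for $y\in\partial B_t(x)$; see \eqref{eq:bound1}. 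This is an elliptic regularity input (Bochner-type), not Poincar\'e, and it is precisely what preserves the linear dependence on $\mathcal{E}$ that you correctly identify as the crux of the argument.
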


\begin{remark}
In particular, the above is a quantitative version of the classical volume $\eps$-regularity theorem for almost Euclidean balls, where the closeness to the Euclidean model can be quantified in terms of the \emph{best} splitting map $u:B_4(p)\to\setR^N$.
\end{remark}

The elliptic regularity for harmonic functions on $\RCD(-(N-1),N)$ spaces guarantees that any $\delta$-splitting map $u:B_{10}(p)\to\setR^N$ as above is $C(N)$-Lipschitz on $B_{9}(p)$.

Moreover the map $u$ satisfies the following sharp Lipschitz bound and $L^2$-Hessian bound:
\begin{align}
 |\nabla u|^2\le 1+ C(N)\delta^2 \; \text{ in $B_9(p)$}; \label{eq:(i)}\\
\fint_{B_9(p)} |\Hess u|^2\di \haus^N \le C(N) \delta^2. \label{eq:(ii)}
\end{align}
We refer to \cite[Lemma 4.3]{HondaPeng22} for the proof of the sharp Lipschitz bound with a variant of an argument originating in \cite{CheegerNaber15}. The $L^2$-Hessian bound can be easily obtained integrating  the Bochner's inequality with Hessian term against a good cut-off function and employing \eqref{eq:defsplitting}; this argument originated in \cite{CheegerColding96} (see also \cite{BruePasqualettoSemola19} for the implementation in the $\RCD$ setting). We refer to \cite{Gigli18} for the relevant terminology and background about second order calculus on $\RCD$ spaces.

\begin{remark}
	More in general, if $(X,\dist,\meas)$ is an $\RCD(-\delta^2(N-1),N)$ space, and there exists a $\delta$-splitting map $u:B_{10}(p)\to \setR^N$, then $\meas=\haus^N$ up to multiplicative constants, i.e. the space is noncollapsed, see \cite{DePhilippisGigli18,Honda19,BrenaGigliHondaZhu21}.
\end{remark}

\subsection{Volume convergence and approximate distance}
	
We fix an $\RCD(-(N-1),N)$ space $(X,\dist,\haus^N)$ and a point $x\in X$.
We consider a $C(N)$-Lipschitz function
\begin{equation}
	r:B_2(x)\to [0,\infty) \, ,
	\qquad
	r(0) = 0 \, ,
\end{equation} 
belonging to the domain of the Laplacian on $B_2(x)$. It is well-known that, if the lower Ricci bound is reinforced to nonnegative Ricci curvature, and
\begin{equation}\label{eq:functcone}
	\Delta r^2 = 2N \, ,
	\quad
	|\nabla r|^2 = 1\, , \quad\text{on $B_1(p)$}\, ,
\end{equation}
then $B_1(x)$ is a metric cone and $r(x)=\dist(x,p)$, see \cite{CheegerColding96,DePhilippisGigli16}. In particular
\begin{equation}
	t \to \frac{\haus^N(B_t(x))}{\omega_N t^N}\, ,
	\qquad
	\text{is constant in $(0,1)$} \, .
\end{equation}
The next result provides a quantitative control on the derivative of the volume ratio in terms of suitable norms of the error terms in \eqref{eq:functcone}, $|\nabla r|^2-1$ and $\Delta r^2 - 2N$.

\begin{lemma}\label{lemma:derv}
	For $\Leb^1$-a.e. $0<t<1$ it holds
	\begin{align}
		\nonumber   -\frac{\di}{\di t}\frac{\haus^N(B_t(x))}{\omega_Nt^N}\le& \, \frac{C(N)}{t}\fint_{B_t(x)}\abs{\Delta r^2-2N}\di\haus^N+\frac{C(N)}{t}\fint_{\partial B_t(x)}\abs{\abs{\nabla r}^2-1}\di\haus^{N-1}\\
		&\, +\frac{C(N)}{t^2}\left(\fint_{\partial B_t(x)}r\di\haus^{N-1}-t\right)\, .\label{der:mainlemma}
	\end{align}
	In particular, if $x\in X$ is a regular point, it holds that for every $t\in (0,1)$,
	\begin{align}
		\nonumber 1-\frac{\haus^N(B_t(x))}{\omega_Nt^N}\le&\, C(N)\int_0^t\left[\fint_{B_s(x)}\abs{\Delta r^2-2N}\di\haus^N+\fint_{\partial B_s(x)}\abs{\abs{\nabla r}^2-1}\di\haus^{N-1}\right]\frac{\di s}{s}\\
		&\, +\int_0^t\left(\fint_{\partial B_s(x)}r\di\haus^{N-1}-s\right)\frac{\di s}{s^2}\, .\label{eq:integratedgeneral}
	\end{align} 
\end{lemma}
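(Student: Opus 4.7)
The strategy is to quantify the extent to which $r$ fails to satisfy the functional-cone equations in \eqref{eq:functcone} via an integration by parts on the metric ball $B_t(x)$. Setting $V(t):=\haus^N(B_t(x))$, the coarea formula gives $V'(t)=\haus^{N-1}(\partial B_t(x))$ for $\Leb^1$-a.e.\ $t\in(0,1)$, so
\[
-\frac{\di}{\di t}\frac{V(t)}{\omega_N t^N}=\frac{NV(t)-tV'(t)}{\omega_N t^{N+1}}\, ,
\]
and the task reduces to an upper bound on $NV(t)-tV'(t)$.

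The key step is the Gauss--Green identity, valid for a.e.\ $t$,
\[
\int_{B_t(x)}\Delta r^2\di\haus^N=2\int_{\partial B_t(x)}r\,(\nabla r\cdot\nu)\di\haus^{N-1}\, ,
\]
where $\nu$ denotes the outer measure-theoretic unit normal to $B_t(x)$; by the theory of sets of finite perimeter on $\RCD$ spaces, $\nu$ can be identified with $\nabla\dist(x,\cdot)$ on the reduced boundary, so in particular $|\nu|=1$ $\haus^{N-1}$-a.e.\ there. Young's inequality then yields $\nabla r\cdot\nu\le 1+\tfrac{1}{2}(|\nabla r|^2-1)$. Since $r(x)=0$ and $r$ is $C(N)$-Lipschitz, $r\le C(N)t$ on $\partial B_t(x)$, and combining these facts gives
\[
\int_{B_t(x)}\Delta r^2\di\haus^N\le 2\int_{\partial B_t(x)}r\di\haus^{N-1}+C(N)t\int_{\partial B_t(x)}\bigl||\nabla r|^2-1\bigr|\di\haus^{N-1}\, .
\]
Combined with the trivial lower bound $\int_{B_t(x)}\Delta r^2\ge 2NV(t)-\int_{B_t(x)}|\Delta r^2-2N|$ and with $\int_{\partial B_t(x)}r=tV'(t)+\int_{\partial B_t(x)}(r-t)$, this yields
\begin{align*}
NV(t)-tV'(t)&\le \int_{\partial B_t(x)}(r-t)\di\haus^{N-1}+\tfrac{C(N)}{2}t\int_{\partial B_t(x)}\bigl||\nabla r|^2-1\bigr|\di\haus^{N-1}\\
&\quad+\tfrac{1}{2}\int_{B_t(x)}|\Delta r^2-2N|\di\haus^N\, .
\end{align*}

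To pass to the averaged form \eqref{der:mainlemma}, we divide by $\omega_N t^{N+1}$ and apply the Bishop--Gromov upper bounds $V(t)\le C(N)\omega_N t^N$ and $V'(t)\le C(N)\omega_N t^{N-1}$, which convert the absolute integrals into averages at the cost of a dimensional constant. The integrated bound \eqref{eq:integratedgeneral} then follows from the fundamental theorem of calculus: at a regular point $x$ one has $V(s)/(\omega_N s^N)\to 1$ as $s\downarrow 0$ by the volume convergence theorem, and integrating \eqref{der:mainlemma} from $0$ to $t$ produces the multi-scale right hand side. The main obstacle is the rigorous justification of the Gauss--Green identity on $B_t(x)$ for a.e.\ $t$ and the identification of its outer normal with $\nabla\dist(x,\cdot)$ in the $\RCD$ setting, which relies on the theory of sets of finite perimeter and BV calculus on $\RCD$ spaces.
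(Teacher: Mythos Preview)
Your proposal is correct and follows essentially the same route as the paper. The paper packages the key pointwise inequality as the nonnegativity of $r\,|\nabla(\dist_x-r)|^2$ on $\partial B_t(x)$, which after expansion is exactly your Young inequality $\nabla r\cdot\nu\le 1+\tfrac12(|\nabla r|^2-1)$ with $\nu=\nabla\dist_x$; from there both arguments apply Gauss--Green to $\nabla r^2$ (the paper citing \cite{BruePasqualettoSemola19,BruePasqualettoSemola21} for the identification of the outer normal), split off the $2N\haus^N(B_t(x))$ term, and use Bishop--Gromov to pass to averages.
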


\begin{proof}	
For any $x\in X$, the function
\begin{equation*}
	t\mapsto \frac{\haus^N(B_t(x))}{t^N}
\end{equation*}
is locally Lipschitz and differentiable at every $t\in (0,\infty)$. Moreover,
\begin{equation}\label{eq:derBG}
	\frac{\di }{\di t} \frac{\haus^N(B_t(x))}{t^N}=\frac{\haus^{N-1}(\partial B_t(x))}{t^N}-\frac{N\haus^N(B_t(x))}{t^{N+1}}\, ,
\end{equation}
for a.e. $t\in (0,\infty)$, as a consequence of the coarea formula. We also notice that $\haus^{N-1}(\partial B_t(x))=\Per(B_t(x))$ for a.e. $t\in (0,\infty)$.

\medskip
	
For a.e. $t\in (0,1)$ it holds
	\begin{align}
		\nonumber	\int_{\partial B_t(x)} r |\nabla (\dist_x - r)^2| \di \haus^N
			& = 
			\int_{\partial B_t(x)} r(1+|\nabla r|^2) \di \haus^{N-1} - 2\int_{\partial B_t(x)} r \nabla r \cdot \nabla \dist_x \di \haus^{N-1}
	\label{eq:splitest1}		\\& =: I + II\,,
	\end{align}
    since $\haus^N$-a.e. on $X$ it holds that $\abs{\nabla \dist_x}=1$.

    Let us estimate $I$ in \eqref{eq:splitest1}:
    \begin{align}
    \nonumber			I   \le &\, 2\int_{\partial B_t(x)} r \di \haus^{N-1} + \int_{\partial B_t(x)}r||\nabla r|^2 - 1| \di \haus^{N-1}
    			\\ \le &\,  2\int_{\partial B_t(x)} r \di \haus^{N-1} \nonumber
  \nonumber  			+ C(N) t \int_{\partial B_t(x)} \abs{\abs{\nabla r}^2-1} \di \haus^{N-1} \\
\nonumber			\le \,  &2t\haus^{N-1}(\partial B_t(x))+2\abs{\int_{\partial B_t(x)}r\di\haus^{N-1}-t\haus^{N-1}(\partial B_t(x))}\\
&\, +C(N)t\int_{\partial B_t(x)} \abs{\abs{\nabla r}^2-1} \di \haus^{N-1} \, .\label{eq:boundI2}
    \end{align}
    
  Away from a further $\Leb^1$-negligible set of radii $t\in (0,1)$, we can estimate $II$ with the Gauss-Green formula from \cite{BruePasqualettoSemola19}, after recalling that the exterior unit normal of $B_t(x)$ coincides $\haus^{N-1}$-a.e. with $\nabla \dist_x$, see \cite[Proposition 6.1]{BruePasqualettoSemola21}. We obtain
    \begin{align}
   \nonumber 	-2\int_{\partial B_t(x)} r \nabla r \cdot \nabla \dist_x \di \haus^{N-1} =& -\int_{B_t(x)} \Delta r^2 \di \haus^N\\
    	\le & -2N \haus^N(B_t(x)) +\abs{\int_{B_t(x)}\left(\Delta r^2-2N\right)\di\haus^N}  \, .\label{eq:gaplapla}
    \end{align}
   The combination of \eqref{eq:boundI2} and \eqref{eq:gaplapla} together with \eqref	{eq:splitest1} proves that  
   \begin{align*}
   \int_{\partial B_t(x)} r |\nabla (\dist_x - r)^2| \di \haus^N\le & 2t\haus^{N-1}(\partial B_t(x))-2N \haus^N(B_t(x))\\
   &+2\abs{\int_{\partial B_t(x)}r\di\haus^{N-1}-t\haus^{N-1}(\partial B_t(x))}\\
   &+ C(N)t\int_{\partial B_t(x)} \abs{\abs{\nabla r}^2-1} \di \haus^{N-1}\\
   &+\abs{\int_{B_t(x)}\left(\Delta r^2-2N\right)\di\haus^N} \, .
   \end{align*}
   Hence, combining \eqref{eq:derBG} with last estimate, we get 
   \begin{align*}
\nonumber   -\frac{\di}{\di t}\frac{\haus^N(B_t(x))}{\omega_Nt^N}\le& \frac{C_N}{t}\fint_{B_t(x)}\abs{\Delta r^2-2N}\di\haus^N+\frac{C_N}{t}\fint_{\partial B_t(x)}\abs{\abs{\nabla r}^2-1}\di\haus^{N-1}\\
   &+\frac{C_N}{t^2}\abs{\fint_{\partial B_t(x)}r\di\haus^{N-1}-t}\, .
   \end{align*}
 The second conclusion in the statement follows by integrating the first one, as the function
 \begin{equation*}
 s\mapsto \frac{\haus^N(B_s(x))}{\omega_Ns^N}
 \end{equation*}  
 is locally Lipschitz and limits to $1$ as $s\downarrow 0$.
\end{proof}

\subsection{Proof of \autoref{prop:quantitativevolcE}}

Let $u:B_{10}(p) \to \setR^N$ be a $\delta$-splitting map.
Fix $x\in B_2(p)$. Up to the addition of some constant that do not affect the forthcoming statements, we can assume that $u(x) =0$ and define
\begin{equation}
	r^2 = \sum_i u_i^2 \, .
\end{equation}

We estimate the gap between $r$ and the distance function from $x$ and between $\Delta r^2$ and $2N$ in terms of the quantity $\mathcal{E}$ introduced in \eqref{eq:defsplitting}.

\begin{lemma}\label{lemma:bound r}
	The following inequalities hold $\haus^N$-a.e. in $B_8(p)$:
	\begin{equation}\label{eq:r_lip}
		r \le (1+ C(N)\delta^2)\dist_x
	  \, ,
	\end{equation}
	\begin{equation}\label{eq:est E}
		||\nabla r|^2 - 1| +	|\Delta r^2 - 2N| \le 3 \mathcal{E} \, .
	\end{equation}
\end{lemma}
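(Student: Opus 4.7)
Both inequalities reduce to direct computations exploiting only the identity $r^2 = \sum_i u_i^2$, the harmonicity of the components $u_i$, and the sharp Lipschitz bound \eqref{eq:(i)} for splitting maps. I would treat them in order.

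For \eqref{eq:r_lip}, note that $r(y) = |u(y) - u(x)|$ since $u(x) = 0$. I would integrate along a unit-speed geodesic $\gamma : [0, L] \to X$ from $x$ to $y$, with $L = \dist(x, y)$. Applying the fundamental theorem of calculus to each $u_i \circ \gamma$ followed by Cauchy--Schwarz gives
\begin{equation*}
r(y)^2 \,=\, \sum_i \bigl(u_i(y) - u_i(x)\bigr)^2 \,\le\, L \int_0^L \sum_i |\nabla u_i|^2(\gamma(s)) \, \di s \,\le\, (1 + C(N)\delta^2)\, L^2,
\end{equation*}
by the sharp Lipschitz bound $\sum_i |\nabla u_i|^2 \le 1 + C(N)\delta^2$. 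Taking square roots yields \eqref{eq:r_lip}.

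For \eqref{eq:est E}, the key tool is the Leibniz rule for the gradient and for the Laplacian in the $\RCD$ second-order calculus of \cite{Gigli18}. Harmonicity of $u_i$ gives $\Delta u_i^2 = 2 u_i \Delta u_i + 2 |\nabla u_i|^2 = 2|\nabla u_i|^2$, whence $\Delta r^2 = 2 \sum_i |\nabla u_i|^2$ and
\begin{equation*}
|\Delta r^2 - 2N| \,=\, 2\,\Bigl|\sum_i (\nabla u_i \cdot \nabla u_i - \delta_{ii})\Bigr| \,\le\, 2\mathcal{E}.
\end{equation*}
On the other hand, $\nabla r^2 = 2 \sum_i u_i \nabla u_i$ and $|\nabla r^2|^2 = 4 r^2 |\nabla r|^2$, so squaring and extracting the diagonal contribution $\sum_i u_i^2 = r^2$ gives
\begin{equation*}
r^2 (|\nabla r|^2 - 1) \,=\, \sum_{i, j} u_i u_j (\nabla u_i \cdot \nabla u_j - \delta_{ij}).
\end{equation*}
The bound $|u_i u_j| \le r^2$, which follows from $u_i^2, u_j^2 \le r^2$, then yields $||\nabla r|^2 - 1| \le \mathcal{E}$ wherever $r > 0$. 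Summing the two estimates gives the claimed constant $3$.

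The main obstacle is not conceptual but a matter of careful bookkeeping: one must justify the Leibniz rules in the appropriate functional class (harmonic $u_i \in D(\Delta) \cap \Lip_{\loc} \cap L^{\infty}_{\loc}$), which is standard from \cite{Gigli18}. The only real subtlety is the $\haus^N$-null set $\{r = 0\} = u^{-1}(0)$, where the chain rule $\nabla r^2 = 2 r \nabla r$ degenerates; on this set $\nabla u = 0$ $\haus^N$-a.e.\ (as for any Sobolev function on a sub-level set), which forces $\nabla r = 0$ and $\mathcal{E} \ge N$, so the pointwise bound persists $\haus^N$-a.e.\ on all of $B_8(p)$.
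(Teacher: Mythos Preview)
Your argument for \eqref{eq:est E} is correct and matches the paper's proof essentially line for line; the extra care you take with the null set $\{r=0\}$ is a nice touch that the paper leaves implicit.

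Your argument for \eqref{eq:r_lip}, however, contains a genuine slip. You invoke ``the sharp Lipschitz bound $\sum_i |\nabla u_i|^2 \le 1 + C(N)\delta^2$'', but this inequality is false for $N\ge 2$: for a $\delta$-splitting map each component satisfies $|\nabla u_i|^2 \approx 1$, so the sum is $\approx N$, not $\approx 1$. Consequently your componentwise Cauchy--Schwarz only delivers $r(y)^2 \le N(1+C\delta^2)\dist(x,y)^2$, i.e.\ $r \le \sqrt{N}(1+C\delta^2)\,\dist_x$, which loses the crucial leading coefficient $1$ and is useless for the sharp volume comparison downstream (see the proof of \autoref{cor:gapBGharm}, where one needs $r(y)\le (1+o(1))\dist(x,y)$).

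The fix is to read \eqref{eq:(i)} as the paper intends: a bound on the Lipschitz constant of $u$ as a map into $\setR^N$ (equivalently, on the operator norm of $du$, i.e.\ on the largest eigenvalue of the Gram matrix $(\nabla u_i\cdot\nabla u_j)$). This gives $|u(y)-u(x)|_{\setR^N} \le (1+C(N)\delta^2)\,\dist(x,y)$ in one step, with no geodesic integration needed. If you prefer to keep your integration-along-geodesics approach, apply it instead to the scalar function $w\cdot u$ for an arbitrary unit vector $w\in\setR^N$, use $|\nabla(w\cdot u)|^2 = w^T(\nabla u_i\cdot\nabla u_j)w \le 1+C(N)\delta^2$, and then take the supremum over $w$.
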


\begin{proof}
	The first conclusion follows from the sharp Lipschitz bound \eqref{eq:(i)}. Indeed,
	\begin{equation*}
		r(y)^2 
		= \sum_i |u_i(y) - u_i(x)|^2
		= |u(y) - u(x)|^2 \le (1+C(N)\delta^2)^2 \dist(x,y)^2\, .
	\end{equation*}
	To show \eqref{eq:est E} we employ the identities
	\begin{equation*}
		|\nabla r^2|^2 =4 \sum_{i,j} u_i u_j \nabla u_i \cdot \nabla u_j
		\, ,
	\end{equation*}
	\begin{equation*}
		\Delta r^2 =\sum_{i}\Delta u_{i}^2= 2\sum_i |\nabla u_i|^2 \, ,
	\end{equation*} 
	that can be obtained via the chain rule taking into account that $\Delta u=0$, together with some elementary algebraic manipulations.
\end{proof}

\begin{corollary}\label{cor:gapBGharm}
Under the same assumptions and with the same notation above, for $\Leb^1$-a.e. $0<r<1$ it holds that 
     \begin{equation}\label{eq:derestcor}
     	- \frac{\di}{\di t} \left( \frac{\haus^{N}(B_t(x))}{t^N}   \right)
     	\le 
     	C(N)t^{-N} \int_{\partial B_t(x)} \mathcal{E} \di \haus^{N-1} + C(N) t^{-N - 1}\int_{B_{4t}(x)} \mathcal{E} \di \haus^N \, .
     \end{equation}
Moreover, if $x\in X$ is regular, then for any $0<r<1$ it holds
\begin{equation}\label{eq:integratedbound}
1-\frac{\haus^N(B_r(x))}{\omega_Nr^N}
\le 
C(N)  \int_0^r\fint_{B_{4t}(x)}\mathcal{E}\di\haus^N\frac{\di t}{t}\, .
\end{equation}

\end{corollary}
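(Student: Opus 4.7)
The plan is to derive the pointwise derivative bound \eqref{eq:derestcor} from \autoref{lemma:derv} by estimating each of its three error terms via \autoref{lemma:bound r} and standard consequences of the $\delta$-splitting hypothesis, and then to integrate from $0$ to $r$ to deduce \eqref{eq:integratedbound}.

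The first two error terms in \autoref{lemma:derv} are handled by replacing the integrands $\abs{\Delta r^2-2N}$ and $\abs{\abs{\nabla r}^2-1}$ with $3\mathcal{E}$ via \autoref{lemma:bound r}, and converting the averages $\fint$ into raw integrals using the volume and perimeter lower bounds $\haus^N(B_t(x))\ge c(N)t^N$ and $\haus^{N-1}(\partial B_t(x))\ge c(N)t^{N-1}$, valid for $\Leb^1$-a.e.\ $t\in(0,1)$ and every $x\in B_4(p)$. These follow from Bishop--Gromov monotonicity in $\RCD(-(N-1),N)$ combined with the noncollapsing estimate $\haus^N(B_5(p))\ge (1-C(N)\delta^2)\omega_N 5^N$ forced by the existence of the $\delta$-splitting map on $B_{10}(p)$.

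The third error term $\tfrac{C}{t^2}\big(\fint_{\partial B_t(x)}r\,d\haus^{N-1}-t\big)$ is the delicate one: the pointwise Lipschitz bound $r\le (1+C(N)\delta^2)\dist_x$ from \autoref{lemma:bound r} yields only $O(\delta^2/t)$, which is not integrable near $t=0$. A refinement is obtained by working along a minimizing geodesic $\gamma\colon[0,\dist(x,y)]\to X$ from $x$ to $y$. Using Cauchy--Schwarz and the linear-algebraic fact that at a regular point $MM^{T}\approx \Id$ entails $M^{T}M\approx \Id$ (where $M$ is the matrix whose rows are the gradients $\nabla u_1,\dots,\nabla u_N$), one gets $\sum_i(\nabla u_i\cdot\dot\gamma)^2\le 1+C(N)\mathcal{E}$ along $\gamma$. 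Integrating along $\gamma$ and using $r^2-\dist_x^2=(r-\dist_x)(r+\dist_x)\ge (r-\dist_x)\dist_x$ whenever $r\ge \dist_x$, one obtains the pointwise refinement
\[
(r-\dist_x)_+(y)\le C(N)\int_0^{\dist(x,y)}\mathcal{E}(\gamma(s))\,ds,\qquad \text{for $\haus^N$-a.e.\ $y$.}
\]
Integrating this over $\partial B_t(x)$ and applying a Heintze--Karcher/segment-type inequality in $\RCD(-(N-1),N)$ (available through the one-dimensional disintegration of $\haus^N$ along geodesics emanating from $x$) then bounds the resulting double integral by a volume integral of $\mathcal{E}$ of the form that appears in the second summand of \eqref{eq:derestcor}.

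For the integrated bound \eqref{eq:integratedbound}, one integrates \eqref{eq:derestcor} from $0$ to $r$, using that $\haus^N(B_t(x))/(\omega_N t^N)\to 1$ as $t\downarrow 0$ at every regular point $x$, and reorganises the single and double integrals of $\mathcal{E}$ via the coarea formula into the final form $\int_0^r\fint_{B_{4t}(x)}\mathcal{E}\,d\haus^N\,dt/t$. The main obstacle in the whole argument is the third error term: the naive Lipschitz bound does not suffice, and one has to combine the sharp geodesic estimate above with a Heintze--Karcher-type inequality in the $\RCD$ setting.
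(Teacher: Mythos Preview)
Your treatment of the first two error terms in \autoref{lemma:derv} matches the paper's. The divergence is in the third term $\tfrac{C}{t^{2}}\bigl(\fint_{\partial B_t(x)}r-t\bigr)$, and here you missed a much simpler argument: the sharp Lipschitz bound \eqref{eq:(i)} is scale-invariant and can be applied on $B_{4t}(x)$ rather than on the global ball $B_{10}(p)$. Since $u$ is harmonic and a priori $C(N)$-Lipschitz on $B_{4t}(x)\subset B_9(p)$, one obtains
\[
\|\nabla u\|_{L^\infty(B_{2t}(x))}^2\le 1+C(N)\fint_{B_{4t}(x)}\mathcal{E}\,\di\haus^N,
\]
hence $r(y)\le\bigl(1+C(N)\fint_{B_{4t}(x)}\mathcal{E}\bigr)\dist(x,y)$ for every $y\in B_t(x)$, which immediately gives the second summand of \eqref{eq:derestcor}. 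No geodesic integration or segment inequality is needed.

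Your alternative route via geodesic integration and a Heintze--Karcher/segment-type inequality has a gap as written: the MCP-based bound (this is essentially \autoref{lemma:doubling} later in the paper) gives
\[
\fint_{\partial B_t(x)}\int_0^t\mathcal{E}(\gamma_y(s))\,\di s\,\di\haus^{N-1}(y)\le C(N)\,t\sup_{0<s<t}\fint_{B_s(x)}\mathcal{E}\,\di\haus^N,
\]
or equivalently an inner integral $\int_0^t s^{-(N-1)}\int_{\partial B_s(x)}\mathcal{E}\,\di s$; neither expression is controlled by $t^{-(N-1)}\int_{B_{4t}(x)}\mathcal{E}$ in general (take $\mathcal{E}$ concentrated near $x$). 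So \eqref{eq:derestcor} in the stated form does not follow from your argument. Your approach can nonetheless be salvaged for the integrated bound \eqref{eq:integratedbound}: after Fubini in $s$ and $t$, the inner-scale weighting collapses and one is left with a term of the form $\int_0^r s^{-N}\int_{\partial B_s(x)}\mathcal{E}\,\di\haus^{N-1}\,\di s$, which is then handled by the same integration-by-parts and coarea computation as the first summand.
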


\begin{proof}
In order to prove \eqref{eq:derestcor}, it is sufficient to employ \eqref{der:mainlemma} in combination with \autoref{lemma:bound r}. Indeed, the bound for the first two summands at the right hand side of \eqref{der:mainlemma} follows directly from \eqref{eq:est E}.
In order to bound the last summand we notice that, thanks to the sharp Lipschitz estimate  \eqref{eq:(i)} applied on the ball $B_{4t}(x)$,
    \begin{equation*}
    	r(y) = |u(y) - u(x)| \le \norm{\nabla u}_{L^\infty(B_{2t}(x))} \dist(x,y) \le \left(1+C(N)\fint_{B_{4t}(x)}\mathcal{E}\di \haus^{N}\right)\dist(x,y)\, ,
    \end{equation*}
    for any $y\in B_t(x)$, hence
    \begin{equation}\label{eq:bound1}
    	\int_{\partial B_t(x)} r \di \haus^{N-1}
    	\le t\haus^{N-1}(\partial B_t(x)) + C(N) \int_{B_{4t}(x)} \mathcal{E} \di \haus^N \, .
    \end{equation}
\medskip

The estimate \eqref{eq:integratedbound} follows by integrating \eqref{eq:derestcor} in $t$. Indeed, integrating by parts in $t$ and using the coarea formula, we obtain
\begin{align}
\int_{0}^{r}t^{-N} \int_{\partial B_{t}(x)} \mathcal{E} \di \haus^{N-1}  \di t  &\leq  C(N) \Big( \fint_{B_{r}(x)}  \mathcal{E} \di \haus^N - \liminf_{t\downarrow 0}  \fint_{B_{t}(x)}  \mathcal{E} \di \haus^N \nonumber \\
& \qquad \qquad \quad + \int_{0}^{r}  \fint_{B_{t}(x)} \mathcal{E} \di \haus^N \frac{\di t}{t} \Big)\nonumber \\
&\leq   C(N)\Big(  \fint_{B_{r}(x)}  \mathcal{E} \di \haus^N + \int_{0}^{r}  \fint_{B_{t}(x)} \mathcal{E} \di \haus^N \frac{\di t}{t}\Big)\,. \label{eq:UBIntBP}
\end{align}
By using the Bishop-Gromov inequality, we estimate
\begin{align} 
	\fint_{B_{r}(x)}\mathcal{E}\di\haus^N
	 & \le
	 C(N)\int_{r/4}^r \fint_{B_{4t}(x)} \mathcal{E} \di \haus^N \frac{\di t}{t} \nonumber
	 \\& \le
	  C(N)\int_0^{r}\fint_{B_{4t}(x)}\mathcal{E}\di\haus^N\frac{\di t}{t} \, .	 \label{eq:UBIntBPBG}	
    \end{align}
The claimed bound \eqref{eq:integratedbound} follows then by integrating \eqref{eq:derestcor} in $t$, taking into account \eqref{eq:UBIntBP} and \eqref{eq:UBIntBPBG}.
\end{proof}

Given \autoref{cor:gapBGharm}, to conclude the proof of \autoref{prop:quantitativevolcE} it is enough to estimate the negative part of
\begin{equation*}
	1-\frac{\haus^N(B_r(x))}{\omega_N r^N}\, . 
\end{equation*}
This goal can be easily achieved using the Bishop-Gromov inequality, arguing as in \cite{LytchakKapovitchPetrunin21}:
\begin{equation}\label{eq:lower}
	\begin{split}
		1-\frac{\haus^N(B_r(x))}{\omega_N r^N} 
		&\ge 1- \frac{\haus^N(B_r(x))}{v_{-1,N}(r)} + \frac{\haus^N(B_r(x))}{v_{-1,N}(r)} - \frac{\haus^N(B_r(x))}{\omega_N r^N}
		\\\ge & - \frac{\haus^N(B_r(x))}{v_{-1,N}(r)}\abs{\frac{v_{-1,N}(r) - \omega_N r^N}{\omega_N r^N}}\, ,
	\end{split}	
\end{equation}
where $v_{-1,N}(r)$ is the volume of the ball of radius $r$ in the model space of constant sectional curvature $-1$ and dimension $N\in \setN$.\\
Using the well-known expansion of $v_{-1,N}(r)$ around $0$ and the Bishop-Gromov inequality, we deduce that
\begin{equation}\label{eq:lower2}
	\frac{\haus^N(B_r(x))}{v_{-1,N}(r)}\abs{\frac{v_{-1,N}(r) - \omega_N r^N}{\omega_N r^N}}
	\le C(N) r^2 \, \quad
	\text{for any $r\in (0,10)$} \, .
\end{equation}
In conclusion, the combination of \eqref{eq:integratedbound}  with \eqref{eq:lower} and  \eqref{eq:lower2}  proves the following:
\begin{equation}\label{eq:integratedbound2}
	\abs{1-\frac{\haus^N(B_r(x))}{\omega_Nr^N}}
	\le C(N)\left( r^2+\int_0^r\fint_{B_{4t}(x)}\mathcal{E}\di\haus^N\frac{\di t}{t}\right)\, .
\end{equation}

\section{Second order corrections}\label{sec:secondordercorr}

In order to prove that the metric measure boundary is vanishing directly through the quantitative volume estimate (cf. \autoref{prop:quantitativevolcE}) we would need to build $\delta$-splitting maps whose Hessian is zero, in suitable sense, on a big set. 
This seems to be definitely hopeless, even on general smooth Riemannian manifolds, as the gradients of the components of the splitting map would be parallel vector fields. In order to overcome this issue we argue as follows:
\begin{itemize}
	\item[(i)] First, we show that the metric measure boundary is absolutely continuous with respect to $\haus^N$; this is done in \autoref{sec:control of the mmb} below.
	\item[(ii)] In a second step, we show that the density of the boundary measure with respect to $\haus^N$ is zero almost everywhere; this will be an outcome of  \autoref{prop:or} in this section.
\end{itemize}
We remark that after establishing (i), the vanishing of the metric measure boundary for Alexandrov spaces with empty boundary would follow directly from \cite[Theorem 1.7]{LytchakKapovitchPetrunin21}.

In order to prove (ii), a key step is to build maps whose Hessian vanishes at a fixed point. In order to do so, we will allow for some extra flexibility on the $\delta$-splitting map.
More precisely, for $\haus^N$ a.e. $x\in X$ we can build an \textit{almost $\delta$-splitting} $u: B_{10}(p)\to \setR^N$, meaning that $\Delta u$ is not necessarily zero in a neighbourhood of $x$ but rather converging to $0$ at sufficiently fast rate at $x$, that satisfies 
\begin{equation}
	\lim_{r\downarrow 0}\fint_{B_r(x)}|\Hess u|^2 \di \haus^N = 0 \, .
\end{equation}
The construction of these maps is of independent interest and pursued in \autoref{subsec:second order}.

\begin{proposition}\label{prop:or}
	Let $(X,\dist,\haus^N)$ be an $\RCD(-(N-1),N)$ metric measure space. Then for $\haus^N$-a.e. $x\in X$ it holds
	\begin{equation}\label{eq:or}
		\lim_{r\downarrow 0}\frac{1}{r}\abs{1-\frac{\haus^N(B_r(x))}{\omega_Nr^N}}=0\, .
	\end{equation}
\end{proposition}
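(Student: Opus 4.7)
The plan is to combine the quantitative volume estimate \autoref{prop:quantitativevolcE} with a point-adapted splitting map whose Hessian concentration vanishes at the chosen point. This modified map is constructed in \autoref{subsec:second order} via a second-order correction of a standard $\delta$-splitting map, and I will use it here as a black box.

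Fix an $\haus^N$-generic regular point $x \in X$. By the regularity theory of noncollapsed $\RCD$ spaces, for every sufficiently small scale there exists, after rescaling, a $\delta$-splitting map $u \colon B_{10}(p) \to \setR^N$ with $x \in B_1(p)$ and $\delta \le \delta(N)$. The almost-splitting construction of \autoref{subsec:second order} then produces, at $\haus^N$-a.e. such $x$, a modification $\tilde u = u + Q(u)$, with $Q$ a quadratic polynomial in the components of $u$, enjoying three key properties: $\tilde u$ is an \emph{almost} $\delta$-splitting, in the sense that $\Delta \tilde u$ need not vanish but decays fast at $x$; the gradient relation $\nabla \tilde u_i(x) \cdot \nabla \tilde u_j(x) = \delta_{ij}$ holds in the sense of Lebesgue values; and the Hessian concentration $\lim_{r \downarrow 0} \fint_{B_r(x)} |\Hess \tilde u|^2 \di \haus^N = 0$ holds.

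Granted these three properties, the natural extension of \autoref{prop:quantitativevolcE} to almost splitting maps (in which the extra contribution from $\Delta \tilde u$ is absorbed into a $o(r)$ term at $x$) yields
\[
  \abs{1-\frac{\haus^N(B_r(x))}{\omega_N r^N}} \le C(N)\left(r^2 + \int_0^r \fint_{B_{4t}(x)} \tilde{\mathcal{E}} \di \haus^N \, \frac{\di t}{t}\right) + o(r),
\]
where $\tilde{\mathcal{E}} := \sum_{i,j} |\nabla \tilde u_i \cdot \nabla \tilde u_j - \delta_{ij}|$. The conclusion $\frac{1}{r}\abs{1 - \haus^N(B_r(x))/(\omega_N r^N)} \to 0$ then follows from the elementary fact that $\int_0^r h(t)/t \, \di t = o(r)$ whenever $h(t) = o(t)$, applied to $h(t) = \fint_{B_{4t}(x)} \tilde{\mathcal{E}} \di \haus^N$.

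The integrated rate $\fint_{B_r(x)} \tilde{\mathcal{E}} \di \haus^N = o(r)$ is in turn obtained by a Poincar\'e-plus-telescoping argument. Setting $g_{ij}(y) := \nabla \tilde u_i(y) \cdot \nabla \tilde u_j(y)$ and writing $\bar g_r$ for its average on $B_r(x)$, the Poincar\'e inequality on $\RCD$ spaces combined with the sharp Lipschitz bound \eqref{eq:(i)} gives
\[
 \fint_{B_r(x)} |g_{ij} - \bar g_r| \di \haus^N \le C(N)\, r \left(\fint_{B_{Cr}(x)} |\Hess \tilde u|^2 \di \haus^N\right)^{1/2},
\]
which is $o(r)$ by the Hessian concentration; a dyadic telescoping along the scales $r\,2^{-k}$, using volume doubling and the Lebesgue property of the gradient at $x$, then yields $|\bar g_r - \delta_{ij}| = o(r)$. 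The main obstacle is therefore not the argument above but the construction of $\tilde u$ itself: building a quadratic correction that kills the Hessian at a fixed point in the $\RCD$ setting while preserving good control on the Laplacian requires the second-order differential calculus on $\RCD$ spaces from \cite{Gigli18}, and constitutes the substantial new ingredient of \autoref{subsec:second order}.
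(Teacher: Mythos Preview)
Your proposal follows the same overall strategy as the paper: build the second-order corrected map from \autoref{subsec:second order}, feed it into the quantitative volume machinery, and close with a Poincar\'e--telescoping estimate showing $\fint_{B_r(x)}\tilde{\mathcal{E}}\,\di\haus^N=o(r)$. The last step is exactly \eqref{eq:normquick} in the paper, and your outline of it is correct.

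There is, however, one genuine step that you treat as routine but is not. You invoke ``the natural extension of \autoref{prop:quantitativevolcE} to almost splitting maps'', absorbing only the contribution of $\Delta\tilde u$ into an $o(r)$ term. But the proof of \autoref{prop:quantitativevolcE} (via \autoref{cor:gapBGharm}) uses more than $\Delta u=0$: the control of the boundary term $\int_{\partial B_t(x)} r\,\di\haus^{N-1}-t\,\haus^{N-1}(\partial B_t(x))$ in \eqref{eq:bound1} relies on the \emph{pointwise} sharp Lipschitz bound \eqref{eq:(i)}, which is proved by subharmonicity of $|\nabla u_i|^2$ and fails for the non-harmonic $\tilde u$. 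Without it you cannot bound $r(y)-\dist(x,y)$ pointwise by $C\dist(x,y)\fint_{B_{4t}(x)}\tilde{\mathcal{E}}$, and the argument stalls.

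The paper does not attempt to recover a sharp $L^\infty$ Lipschitz bound for $\tilde u$. Instead it returns to the more primitive \autoref{lemma:derv} and verifies its three hypotheses directly in \autoref{prop:asymptoticestimates}. The delicate one is \eqref{eq:rmeano}, i.e.\ $\fint_{\partial B_t(x)}r\,\di\haus^{N-1}\le t+o(t^2)$: this is obtained by integrating $|\nabla r|$ along geodesics from $x$ and applying \autoref{lemma:doubling}, an $\mathsf{MCP}$-based estimate that converts the needed pointwise control into the $L^1$ bound $\fint_{B_s(x)}\bigl||\nabla r|-1\bigr|\,\di\haus^N=o(s)$ already available from your Poincar\'e argument. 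This geodesic-integration step is the missing ingredient in your sketch.
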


\begin{remark}	
	The volume convergence and the classical regularity theory imply that
	\begin{equation}
		\lim_{r\downarrow 0}\abs{1-\frac{\haus^N(B_r(x))}{\omega_Nr^N}}=0\,, \quad \text{ for $\haus^N$-a.e. $x\in X$}. 
	\end{equation}
	An improved  convergence rate $o(r^{\alpha})$, for some $\alpha=\alpha(N)<1$ should follow from the arguments in \cite{Colding97, CheegerColding97,CheegerColding2000b} for noncollapsed Ricci limit spaces. More precisely, in \cite[Section 3]{CheegerColding2000b}, it was explicitly observed that one can obtain a rate of convergence for the scale invariant Gromov-Hausdorff distance between balls $B_r(x)$ and Euclidean balls on a set of full measure. It seems conceivable that, along those lines, one can also obtain 
\begin{equation}	
	\lim_{r\downarrow 0} \frac{1}{r^\alpha}\abs{1-\frac{\haus^N(B_r(x))}{\omega_Nr^N}}=0\, , 
\end{equation}	
	for some $\alpha=\alpha(N)<1$, for $\haus^N$-a.e. $x$.	 
However, to the best of our knowledge, the existence of a single point where the $o(r)$ volume convergence rate \eqref{eq:or} holds is new even for Ricci limit spaces.
	This improvement will play a pivotal role in the analysis of the metric measure boundary.
\end{remark}

The proof of \autoref{prop:or} is based on a series of auxiliary results and it is postponed to the end of the section. The strategy is to apply \autoref{lemma:derv} to a different function $r$ defined out of the new coordinates built in \autoref{subsec:second order}. In \autoref{subsec:volume via almost splitting} we check that $r$, $|\nabla r|^2 - 1$ and $\Delta r^2 - 2N$ enjoy all the needed asymptotic estimates.

\subsection{$\delta$-splitting maps with vanishing Hessian at a reference point}
\label{subsec:second order}

The almost $\delta$-splitting map with vanishing Hessian at a given point is built in \autoref{lemma:algebraicomp}. The key step in the construction is provided by \autoref{prop:goodHessian} below.

\begin{proposition}\label{prop:goodHessian}
For any $\eps>0$, if $\delta \le \delta(N,\eps)$ the following property holds.
Given an $\RCD(-(N-1),N)$ m.m.s  $(X,\dist,\haus^N)$, $p\in X$, and a harmonic $\delta$-splitting map $u:B_{10}(p)\to\setR^N$ there exists a set $E\subset B_1(p)$ such that the following hold:
\begin{itemize}
\item[(i)] $\haus^N(B_1(p)\setminus E)\le \eps$;
\item[(ii)] for any $x\in E$ there exists an $N\times N$ matrix $A_x$ such that $\abs{A_x-\Id}\le \eps$ and the map $u^x:=A_x\circ u:B_{10}(p)\to \setR^N$ verifies 
\begin{itemize}
\item[(a)] $\nabla u^x_i(x)\cdot\nabla u^x_j(x)=\delta_{ij}$ and $x$ is a Lebesgue point of $\nabla u^x_i(\cdot)\cdot\nabla u^x_j(\cdot),$ for all $i, j=1,\dots, N$, i.e.
\begin{equation}\label{eq:Lebnablauiuj}
 \lim_{r\downarrow 0}\fint_{B_r(x)}\abs{\nabla u^x_i\cdot \nabla u^x_j-\delta_{ij}}\di\haus^N=0\, .
 \end{equation}
\item[(b)] $\fint _{B_r(x)}\abs{\Hess u^x}\di\haus^N\le \eps$ for any $0<r<1$;
\end{itemize}
\item[(iii)] for any $x\in E$ and for any $k=1,\dots, N$ there exist coefficients $\alpha^k_{ij}$ with $\alpha^k_{ij}=\alpha^k_{ji}$ for any $i,j,k$, such that it holds
\begin{equation}\label{eq:vansihHess}
\lim_{r\downarrow 0}\fint_{B_r(x)}\abs{\Hess u^x_k+\sum_{i,j}\alpha^k_{ij}\nabla u^x_i\otimes\nabla u^x_j}^2\di\haus^N= 0\, .
\end{equation}
\end{itemize}
\end{proposition}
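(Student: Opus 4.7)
The argument combines Lebesgue-point and Hardy--Littlewood maximal-function machinery applied to three $L^1$ objects attached to $u$: the scalar functions $G_{ij}:=\nabla u_i\cdot\nabla u_j$ (with $\fint_{B_{10}(p)}|G_{ij}-\delta_{ij}|\di\haus^N\le \delta^2$ by \eqref{eq:defsplitting}); the pointwise norm squared $|\Hess u|^2\in L^1$ (with average at most $C(N)\delta^2$ by \eqref{eq:(ii)}); and the tensor $\Hess u$ itself, regarded as an $L^2$ section of the symmetric cotangent module. The weak $(1,1)$ bound for the maximal operator and Chebyshev's inequality produce a set $E\subset B_1(p)$ with $\haus^N(B_1(p)\setminus E)\le \eps$ on which: (I) each $G_{ij}$ has a Lebesgue value $G_{ij}(x)$ satisfying $|G(x)-\Id|\le \eps^2$; (II) each $\Hess u_k$ has a tensor-valued Lebesgue value $H_k(x)\in\mathrm{Sym}^2(T_x^\ast X)$ in the $L^2$ sense; (III) the maximal function $M(|\Hess u|^2)(x)$ is at most $\eps^2$. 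Point (II) relies on the second-order differential calculus of \cite{Gigli18} together with the $N$-rectifiability of noncollapsed $\RCD(K,N)$ spaces, which furnish a measurable orthonormal frame of the tangent module and thereby reduce (II) to the scalar Lebesgue differentiation theorem applied componentwise.

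At each $x\in E$ set $A_x:=G(x)^{-1/2}$, the symmetric positive-definite square root; then $|A_x-\Id|\le C(N)\eps^2\le \eps$ provided $\delta$ is chosen small enough in terms of $\eps$. The map $u^x:=A_x\circ u$ satisfies property (a) since, by symmetry of $A_x$,
\begin{equation*}
\lim_{r\downarrow 0}\fint_{B_r(x)}\nabla u_i^x\cdot \nabla u_j^x\di\haus^N=(A_xG(x)A_x)_{ij}=\delta_{ij},
\end{equation*}
and the same computation with absolute values inside the integral yields \eqref{eq:Lebnablauiuj}. Property (b) follows from the pointwise estimate $|\Hess u^x|\le 2|\Hess u|$ combined with Cauchy--Schwarz and (III):
\begin{equation*}
\fint_{B_r(x)}|\Hess u^x|\di\haus^N\le 2\Big(\fint_{B_r(x)}|\Hess u|^2\di\haus^N\Big)^{1/2}\le 2M(|\Hess u|^2)(x)^{1/2}\le 2\eps,
\end{equation*}
for every $r<1$; a harmless rescaling of $\eps$ at the outset delivers the stated bound.

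For (iii), the Hessian of the corrected map has Lebesgue value $\widetilde H_k(x):=\sum_i(A_x)_{ki}H_i(x)$, a symmetric bilinear form on $T_xX$. Property (a) makes $\{\nabla u_i^x(x)\}_{i=1}^N$ an orthonormal basis of the $N$-dimensional tangent space at $x$, so $\widetilde H_k(x)$ admits a unique expansion
\begin{equation*}
\widetilde H_k(x)=-\sum_{i,j=1}^N\alpha_{ij}^k\,\nabla u_i^x(x)\otimes \nabla u_j^x(x),\qquad \alpha_{ij}^k=\alpha_{ji}^k,
\end{equation*}
with $\alpha_{ij}^k:=-\widetilde H_k(x)(\nabla u_i^x(x),\nabla u_j^x(x))$. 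With this choice the decomposition
\begin{equation*}
\Hess u_k^x+\sum_{i,j}\alpha_{ij}^k\nabla u_i^x\otimes \nabla u_j^x=\bigl(\Hess u_k^x-\widetilde H_k(x)\bigr)+\sum_{i,j}\alpha_{ij}^k\bigl(\nabla u_i^x\otimes \nabla u_j^x-\nabla u_i^x(x)\otimes \nabla u_j^x(x)\bigr)
\end{equation*}
reduces \eqref{eq:vansihHess} to two $L^2$-Lebesgue statements: the first summand vanishes in $L^2$-average by (II); the second does so because each $\nabla u_i^x$ is a bounded measurable vector field admitting $\haus^N$-a.e.\ Lebesgue points, and the sharp Lipschitz bound \eqref{eq:(i)} upgrades $L^1$-Lebesgue convergence to $L^2$-Lebesgue convergence for the tensor products.

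The main technical obstacle is the rigorous implementation of the tensor-valued Lebesgue point (II) in the $\RCD$ framework: one needs to identify a measurable orthonormal frame of the tangent module at $\haus^N$-a.e.\ point (using rectifiability of noncollapsed $\RCD(K,N)$ spaces), expand $\Hess u$ componentwise in this frame, apply the classical scalar Lebesgue differentiation theorem to each $L^2$ coefficient, and verify that the resulting limiting tensor is intrinsic and independent of the measurable choices. Once this is in place, the construction of $A_x$ and the coefficients $\alpha_{ij}^k$ is essentially linear algebra carried out pointwise in an orthonormal basis coming from the corrected splitting map.
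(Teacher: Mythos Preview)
Your plan for items (i) and (ii) coincides with the paper's argument: both restrict to the sublevel set of the maximal function of $|\Hess u|^2$, use Lebesgue points of $\nabla u_i\cdot\nabla u_j$ together with the symmetric square root $A_x=G(x)^{-1/2}$ to obtain (a), and deduce (b) from the maximal bound via Cauchy--Schwarz. (The paper derives the smallness of $|G(x)-\Id|$ from the Hessian maximal bound through Poincar\'e and a telescopic sum rather than via a separate Chebyshev step, but this is cosmetic.)

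The substantive difference is in (iii). You reduce to scalar Lebesgue differentiation by fixing an auxiliary measurable orthonormal frame of the tangent module, expanding $\Hess u$ and the $\nabla u_i$ componentwise, and assembling $\alpha_{ij}^k$ from the Lebesgue values of those components; the ``constant tensors'' $\widetilde H_k(x)$ and $\nabla u_i^x(x)\otimes\nabla u_j^x(x)$ appearing in your decomposition are then the sections with constant coefficients in this frame. The paper avoids any auxiliary frame: it takes $-\alpha_{ij}^k$ to be the Lebesgue value of the \emph{scalar} contraction $\Hess u_k^x(\nabla u_i^x,\nabla u_j^x)$, then invokes the $L^\infty$-module theory of \cite{Gigli18} and \cite{GigliPasqualetto16b} to argue that $\{\nabla u_i^x\otimes\nabla u_j^x\}$ is itself a basis of $L^2(TX\otimes TX)$ on a set $G_r\subset B_r(x)$ of almost full measure, expands $\Hess u_k^x=\sum f_{ij}^k\,\nabla u_i^x\otimes\nabla u_j^x$ there, and shows $|f_{ij}^k+\alpha_{ij}^k|\le C(N)\eta$ on $G_r$. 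Your route is more transparent once the frame is in place, at the cost of the extrinsic construction you flag as the ``main technical obstacle''; the paper's route is intrinsic---the splitting map supplies the frame---but leans on the module-basis machinery instead. The two definitions of $\alpha_{ij}^k$ agree (your pairing $\widetilde H_k(x)(\nabla u_i^x(x),\nabla u_j^x(x))$ computes exactly the Lebesgue value of the scalar contraction), so the arguments are equivalent reformulations rather than genuinely different proofs.
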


We state and prove an elementary lemma, \autoref{lemma:linear algebra}. It says that any almost orthogonal matrix $A\in \setR^{N\times N}$ becomes exactly orthogonal after multiplication with some $B\in \setR^{N\times N}$ which is close to the identity. This result will be applied to $A_{ij} := \nabla u_i(x) \cdot \nabla u_j(x)$ where $u:B_{10}(p)\to \setR^N$ is a $\delta$-splitting map and $x\in B_1(p)$ is a point where $|\Hess u(x)|$ is small in an appropriate sense. The matrix $B$ provided by \autoref{lemma:linear algebra} will be used to define a new $\delta$-splitting map $v:= B\circ u$ which is well normalized at $x$.

   \begin{lemma}\label{lemma:linear algebra}
  	For any $\delta \le \delta_0(N)$ the following property holds. For any $A\in \setR^{N\times N}$ satisfying
  	\begin{equation}
  		\abs{A\cdot A^t - I} \le \delta 
  	\end{equation}
  	there exists $B\in \setR^{N\times N}$ such that 
  	\begin{equation}
  		(BA)\cdot (BA)^t = I \, , \quad
  	    \abs{B-I}\le C(N)\delta\, .
  	\end{equation}
  \end{lemma}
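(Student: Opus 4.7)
The natural candidate is the symmetric inverse square root of $S := A A^t$. Once $\delta \le \delta_0(N)$ with $\delta_0(N)\le 1/2$, the symmetric matrix $S$ satisfies $|S-I|\le \delta\le 1/2$, so all its eigenvalues lie in $[1-\delta,1+\delta]\subset[1/2,3/2]$; in particular $S$ is positive definite and possesses a unique symmetric positive definite square root $S^{1/2}$, whose inverse we denote $S^{-1/2}$. Set $B := S^{-1/2}$.

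Since $B$ is symmetric, $B^t=B$, and the identity follows by a direct computation:
\begin{equation*}
(BA)(BA)^t \;=\; B\,A A^t\, B^t \;=\; B S B \;=\; S^{-1/2}\,S\,S^{-1/2} \;=\; I.
\end{equation*}

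To bound $|B-I|$, diagonalize $S = O\Lambda O^t$ with $O$ orthogonal and $\Lambda=\mathrm{diag}(\lambda_1,\dots,\lambda_N)$, so that $B-I = O(\Lambda^{-1/2}-I)O^t$. Applying the mean value theorem to $f(t)=t^{-1/2}$ on the interval $[1/2,3/2]$ gives $|\lambda_i^{-1/2}-1|\le C\,|\lambda_i-1|\le C\,\delta$ for a universal constant $C$. Converting this spectral bound back into the chosen matrix norm (entrywise or operator) costs only a dimensional factor, yielding $|B-I|\le C(N)\,\delta$, which is the required estimate.

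There is essentially no conceptual obstacle here: the argument is a routine application of the functional calculus for symmetric positive definite matrices, together with the Lipschitz behavior of $t\mapsto t^{-1/2}$ near $t=1$. The only care needed is to choose $\delta_0(N)$ small enough to keep the spectrum of $S$ strictly inside $(0,\infty)$ so that $S^{-1/2}$ is well defined and $f(t)=t^{-1/2}$ is Lipschitz on a fixed compact interval containing all eigenvalues.
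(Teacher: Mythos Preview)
Your proof is correct and takes essentially the same approach as the paper: both set $B=(AA^t)^{-1/2}$ and use the Lipschitz behavior of functional calculus near the identity to bound $|B-I|$. The only cosmetic difference is that the paper estimates $|S^{1/2}-I|$ and $|B-I|$ in two separate Lipschitz steps, whereas you apply the mean value theorem directly to $t\mapsto t^{-1/2}$ via diagonalization.
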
  
  
  \begin{proof}
  	It is enough to consider $B^{-1}= \sqrt{ A\cdot A^t}$, which is well-defined because $A\cdot A^t$ is symmetric, positive definite, and invertible provided $\delta\le \delta(N)$.\\
   Notice that the square root is $C(N)$-Lipschitz in a neighbourhood of the identity, hence
	\begin{equation}
	\abs{\sqrt{A\cdot A^t}-I}\le C(N)\abs{A\cdot A^t-I}
	=C(N)\delta\, .
	\end{equation}  
Analogously, the inversion is $C(N)$-Lipschitz in a neighbourhood of the identity, hence
\begin{equation}
\abs{B-I}\le C(N)\abs{\sqrt{A\cdot A^t}-I}\le C(N)\delta\, .
\end{equation}	
\end{proof}

\begin{proof}[Proof of \autoref{prop:goodHessian}]
First of all, since $u:B_{10}(p)\to \mathbb{R}^N$ is a $W^{1,2}$-Sobolev map, then $\haus^N$-a.e. $x$ is a Lebesgue point of $\abs{\nabla u_i}^2$, for all $i=1,\ldots, N$ and for $\nabla u_i\cdot \nabla u_j$ for any $i,j=1,\dots,N$.  Without further comments, the sets $\tilde{E}$ and $E$ constructed below will be assumed to be contained in such a set of full measure made of Lebesgue points of $\nabla u_i\cdot \nabla u_j$.
\smallskip

Let us fix $\delta<10^{-1}$ to be specified later in terms of $\eps$ and $N$. We set
\begin{equation}
	\tilde E:= \left\lbrace  x\in B_2(p) : \, \sup_{r<3} \fint_{B_r(x)} |\Hess u|^2 \di \haus^N \le \delta \right\rbrace \, .
\end{equation}
A standard maximal function argument, along with the estimate 
\begin{equation}
\fint_{B_{5}(p)} |\Hess u|^2\di \haus^N \le C(N)\delta^2\, ,
\end{equation}
implies that $\haus^N(B_1(p)\setminus \tilde E) \le C(N)\delta$.

Let us fix $x\in \tilde E$. The Poincar\'e inequality \cite{VonRenesse08,Rajala12} (cf. with the proof of \cite[Lemma 4.16]{BrueNaberSemola22}) gives
\begin{equation}
	\abs{\fint_{B_{2r}(x)} \mathcal{E}\di \haus^N - \fint_{B_{r}(x)} \mathcal{E}\di \haus^N }
	\le C(N) r \fint_{B_{3r}(x)} |\Hess u|\di \haus^N
	\le C(N) \delta^{1/2} r \, ,
\end{equation}
for any $r<1/2$. A standard telescopic argument implies that $x$ is a Lebesgue point for $\mathcal{E}$ and
\begin{equation}
	\mathcal{E}(x) \le \abs{\fint_{B_3(x)} \mathcal{E} \di \haus^{N} - \lim_{r\downarrow 0} \fint_{B_r(x)} \mathcal{E} \di \haus^N} + \fint_{B_3(x)} \mathcal{E} \di \haus^{N}
	\le C(N)(\delta^{1/2} + \delta^2) \, .
\end{equation}
If $\delta\le \delta(N)$ is small enough, we can apply \autoref{lemma:linear algebra} and find $A_x$ satisfying (ii)(a). To verify (ii)(b), we observe that
\begin{equation}
	|\Hess u^x| \le |A_x| |\Hess u| \le C(N) |\Hess u| \, .
\end{equation}
Let us finally prove (iii). First of all, the same telescopic argument as above gives
\begin{equation}\label{z6}
\sum_{i,j}	\fint_{B_r(x)} |\nabla u_i^x \cdot \nabla u_j^x - \delta_{ij}| \di \haus^N \le C(N) \delta^{1/2} r \, ,
\quad
\text{for any $r<1$} \, .
\end{equation}
We define $E$ as the set of those $x\in \tilde E$ satisfying the following properties:
\begin{itemize}
\item 
\begin{equation}\label{eq:(1)Hess}
	\lim_{r\downarrow 0} \frac{\haus^N(B_r(x) \setminus \tilde E)}{\haus^N(B_r(x))} = \lim_{r\downarrow 0} \frac{1}{r^{N}}\int_{B_r(x)\setminus \tilde E}  |\Hess u|^2 \di \haus^N = 0\,  ;
\end{equation}

\item there exist $\alpha_{ij}^k\in \setR$ such that
\begin{equation}\label{eq:(2)Hess}
	\lim_{r\downarrow 0} \fint_{B_r(x)} \abs{\Hess u_k^x(\nabla u_i^x, \nabla u_j^x) + \alpha_{ij}^k}^2 \di \haus^N = 0 \, ,
\end{equation}
for any $i,j,k=1, \ldots ,N$.
\end{itemize}
Observe that 
\begin{equation}\label{eq:alpha small}
	|\alpha_{ij}^k | \le C(N)\ , \qquad
	\text{for any $i,j,k=1, \ldots, N$}\, ,
\end{equation}
as a consequence of \eqref{eq:(2)Hess} and of the definition of $\tilde E$. Also, notice that $\haus^N(E\setminus \tilde E)=0$: 
it is obvious that \eqref{eq:(1)Hess} holds for $\haus^N$-a.e. $x\in \tilde E$; regarding \eqref{eq:(2)Hess}, we notice that it amounts to ask that $x$ is a Lebesgue point of $\Hess u(\nabla u_i,\nabla u_j)$ for any $i,j=1, \ldots, N$. Indeed, multiplying with $A_x$ does not change this property.

\smallskip

We now show (iii) for $\alpha_{ij}^k$ defined as in \eqref{eq:(2)Hess}.
Fix $x\in E$ and $\eta \ll \delta$.
Thanks to \eqref{z6}, \eqref{eq:(1)Hess} and \eqref{eq:(2)Hess}, we can find $r_0=r_0(\eta)\le 1$ such that for any $r<r_0$ there exists $G_r\subset B_r(x)$ satisfying
\begin{itemize}
	\item $G_{r}$ has $\eta$-almost full measure in $B_{r}(x)$, i.e.
	 \begin{equation}\label{eq:Greta}
	\haus^N(B_r(x)\setminus G_r)\le \eta \haus^N(B_r(x));
	\end{equation}
	\item $G_r \subset \tilde E$ and
	\begin{equation}\label{eq:Hesseta}
		\int_{B_r(x)\setminus \tilde E} |\Hess u|^2 \di \haus^N \le \eta \haus^N(B_r(x)) \, ;
	\end{equation}
	\item for any $y\in G_r$ it holds
	\begin{equation}\label{eq:gradeta}
		\sum_{i,j} |\nabla u_i^x(y) \cdot \nabla u_j^x(y) - \delta_{ij}|
		\le \eta \, 
	\end{equation}
	and
    \begin{equation}\label{eq:Hessalpha}
    	\abs{\Hess u_k^x(y)(\nabla u_i^x(y), \nabla u_j^x(y)) + \alpha_{ij}^k}
    	\le \eta \, ,
    \end{equation}
    for any $i,j,k=1, \ldots, N$.
\end{itemize}
In particular $A_{ij}:=\nabla u_i^x(y)\cdot \nabla u_j^x(y)$ is invertible for any $y\in G_r$.

Fix $r<r_0$.
We denote by $L^2(TX)$ the $L^\infty$-module of velocity fields over $X$, and by $L^2(TX\otimes TX)$ the $L^\infty$-module of $2$-tensors. We refer the reader to \cite{Gigli18} for the relevant background and terminology.

The identification of $L^2(TX)$ with the asymptotic GH-limits provided in \cite{GigliPasqualetto16b}, implies that the family $\{ \nabla u^x_i \, : \, i=1, \ldots, N\} \subset L^2(TX)$ is independent on $G_r$ (cf. \cite[Definition 1.4.1]{Gigli18}).
Using that $L^2(TX)$ has dimension $N$, see \cite{DePhilippisGigli18}, we infer that
\begin{equation}
	\mathcal{B}:= \left\lbrace  \nabla u_i^x\otimes \nabla u_j^x \, : \, i,j=1, \ldots, N \right\rbrace \subset L^2(TX\otimes TX)
\end{equation}
is a base of $L^2(TX\otimes TX)$ on $G_r$, according to \cite[Definition 1.4.3]{Gigli18} (see also \cite[Lemma 2.1]{BruePasqualettoSemola21}). In particular, there exists a family of measurable functions $\{f_{i,j}^k\}_{i,j,k}$ such that
\begin{equation}\label{eq:base Gr}
	\sum_{i,j} f^k_{ij} \nabla u^x_i \otimes \nabla u^x_j = \Hess u^x_k \in L^2(TX \otimes TX)
	\qquad
	\text{on $G_r$} \, ,
\end{equation}
see the discussion in \cite[Page 36]{Gigli18}.

Thanks to \eqref{eq:base Gr}, \eqref{eq:Hesseta}, \eqref{eq:gradeta}  and \eqref{eq:Hessalpha}, we deduce the following pointwise inequalities in $G_r$ for any $i,j,k=1,\ldots, N$:
\begin{equation*}
	|f_{ij}^k| \le C(N) |\Hess u| \le C(N) \delta\, ;
\end{equation*}
\begin{equation*}
	\abs{\alpha_{ij}^k + \Hess u_k^x(\nabla u^x_{i}, \nabla u^x_{j})}
	\le C(N) \eta \sum_{i',j'} |f_{i',j'}^k| 
	\le C(N)\eta \delta \, .
\end{equation*}
Using again \eqref{eq:gradeta}  and \eqref{eq:Hessalpha}, we deduce 
\begin{equation*}
	|\alpha_{ij}^k + f_{ij}^k(y)| \le C(N)\eta \delta \, ,
	\qquad
	\text{for any $y\in G_r$} \, ,
\end{equation*}
which gives in turn
\begin{equation}\label{eq:p1}
	\abs{\Hess u_k^x + \alpha_{ij}^k \nabla u_i^x \otimes \nabla u_j^x} \le C(N)\eta \delta 
	\qquad
	\text{in $G_r$} \, .
\end{equation}
We finally observe that
\begin{equation}\label{eq:p2}
	\begin{split}
	\int_{B_r(x)\setminus G_r} & |\Hess u|^2 \di \haus^N
	\\& \le \int_{B_r(x)\setminus \tilde E} |\Hess u|^2 \di \haus^N
	+ \int_{(B_r(x)\cap \tilde E) \setminus G_r} |\Hess u|^2 \di \haus^N
	\\&
	\le \eta\haus^N(B_r(x)) + C(N) \delta \haus^N(B_r(x)\setminus G_r)
	\\&
	\le C(N)\eta \haus^N(B_r(x)) \, ,
	\end{split}
\end{equation}
where we used \eqref{eq:Greta}, the fact that $|\Hess u|^2 \le\delta$ in $\tilde E$, and \eqref{eq:Hesseta}.

By combining \eqref{eq:p1} and \eqref{eq:p2}, we obtain
\begin{equation}
	\fint_{B_r(x)} \abs{\Hess u_k^x + \alpha_{ij}^k \nabla u_i^x \otimes \nabla u_j^x}^2 \di \haus^N  \le C(N)\eta \, ,
\end{equation}
which implies the sought conclusion due to the arbitrariness of $\eta$ and $r\le r_0(\eta)$.
\end{proof}

Given any point $x\in E$ as in the statement of \autoref{prop:goodHessian}, up to the addition of a constant that does not affect the forthcoming statements we can assume that $u^x(x)=0\in\setR^N$.
We introduce the function $v:B_1(p)\to \setR^N$ by setting
\begin{equation}\label{eq:defv}
v_k(y):=u^x_k(y)+\frac{1}{2}\sum_{ij}\alpha^k_{ij} u^x_i(y) u^x_j(y)\, ,
\qquad
\text{for all $k=1,\dots, N$} \, .
\end{equation}
The point $x\in E$ as in the statement of \autoref{prop:goodHessian} will be fixed from now on, so there will be no risk of confusion.

\smallskip

Below we are concerned with the properties of the function $v$ as in \eqref{eq:defv}. Notice that, on a smooth Riemannian manifold, $v$ would have vanishing Hessian at $x$ and verify $\nabla v_i(x)\cdot\nabla v_j(x)=\delta_{ij}$, by its very construction. 

\begin{lemma}\label{lemma:algebraicomp}
Under the same assumptions and with the same notation introduced above, the map $v:B_1(p)\to\setR^N$  as in \eqref{eq:defv} has the following properties:
\begin{itemize}
\item[i)] for any $i,j=1,\dots,N$ it holds
\begin{equation}\label{eq:normdelta}
\nabla v_i(x)\cdot\nabla v_j(x)=\delta_{ij}\, ,\quad \lim_{t\downarrow 0}\fint_{B_t(x)}\abs{\nabla v_i\cdot\nabla v_j-\delta_{ij}}\di\haus^N=0\, ;
\end{equation}
\item[ii)] for any $k=1,\dots, N$, it holds 
\begin{equation}\label{eq:hess0}
\lim_{t\downarrow 0}\fint_{B_t(x)}\abs{\Hess v_k}^2\di\haus^N=0\, .
\end{equation}
\end{itemize}
\end{lemma}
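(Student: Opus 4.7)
\textbf{Proof plan for \autoref{lemma:algebraicomp}.} The strategy is purely computational: expand $\nabla v_k$ and $\Hess v_k$ using the first- and second-order Leibniz rules from Gigli's calculus on $\RCD$ spaces, then exploit the algebraic design of the correction term in \eqref{eq:defv} together with the input information \eqref{eq:Lebnablauiuj} and \eqref{eq:vansihHess} from \autoref{prop:goodHessian}, plus the Lipschitz bound on $u^x$ and the fact that $u^x(x)=0$.

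\emph{Part (i).} Applying the Leibniz rule and using the symmetry $\alpha^k_{ij}=\alpha^k_{ji}$, one gets
\[
\nabla v_k \;=\; \nabla u^x_k + \sum_{ij}\alpha^k_{ij}\, u^x_i\, \nabla u^x_j .
\]
Evaluating at $x$ and using $u^x(x)=0$ gives $\nabla v_k(x)=\nabla u^x_k(x)$, so $\nabla v_i(x)\cdot\nabla v_j(x)=\delta_{ij}$ by property (ii)(a) of \autoref{prop:goodHessian}. To obtain the $L^1$-Lebesgue property at $x$, expand $\nabla v_i\cdot\nabla v_j$: the leading term is $\nabla u^x_i\cdot\nabla u^x_j$, which has $L^1$-Lebesgue value $\delta_{ij}$ at $x$ by \eqref{eq:Lebnablauiuj}; each of the remaining terms carries at least one factor $u^x_m$, which satisfies $|u^x_m(y)|\le C(N)\,t$ on $B_t(x)$ by \eqref{eq:r_lip} applied componentwise, while $|\nabla u^x|$ is uniformly bounded by \eqref{eq:(i)} and $|\alpha^k_{ij}|\le C(N)$ by \eqref{eq:alpha small}. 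Hence those remainders are $O(t)$ uniformly on $B_t(x)$ and vanish in the limit.

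\emph{Part (ii).} Using the product rule for the Hessian, $\Hess(fg)=f\Hess g+g\Hess f+\nabla f\otimes\nabla g+\nabla g\otimes\nabla f$ (valid in the second-order Sobolev calculus of \cite{Gigli18}), and the symmetry $\alpha^k_{ij}=\alpha^k_{ji}$, one obtains
\[
\Hess v_k \;=\; \Hess u^x_k + \sum_{ij}\alpha^k_{ij}\,\nabla u^x_i\otimes\nabla u^x_j \;+\; \sum_{ij}\alpha^k_{ij}\, u^x_i\,\Hess u^x_j .
\]
The first two terms combine precisely into the object whose $L^2$-Lebesgue value at $x$ is zero by \eqref{eq:vansihHess}. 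The last term is the error; using $|u^x_i(y)|\le C(N)t$ on $B_t(x)$, $|\alpha^k_{ij}|\le C(N)$, and the uniform $L^2$-bound $\fint_{B_t(x)}|\Hess u^x|^2\di\haus^N\le C(N)\delta$ coming from $x\in\tilde E$ in the proof of \autoref{prop:goodHessian}, one gets
\[
\fint_{B_t(x)}\Bigl|\sum_{ij}\alpha^k_{ij}\,u^x_i\,\Hess u^x_j\Bigr|^2\di\haus^N \;\le\; C(N)\,t^2\fint_{B_t(x)}|\Hess u^x|^2\di\haus^N \;\longrightarrow\; 0
\]
as $t\downarrow 0$. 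Combining the two estimates yields \eqref{eq:hess0}.

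\emph{Main obstacle.} Both parts are algebraic once the correct Leibniz rules are available, so the only real issue is rigorously justifying the second-order product rule and the identification of $\Hess u^x_k$ with an element of $L^2(T^{*}X\otimes T^{*}X)$ on $B_1(p)$: this is where the choice to work with the second-order differential calculus of \cite{Gigli18} (already invoked in the proof of \autoref{prop:goodHessian} to write $\Hess u^x_k$ in the basis $\{\nabla u^x_i\otimes\nabla u^x_j\}$) becomes essential. Once this is in place, the cancellation designed into the quadratic correction $\frac12\sum\alpha^k_{ij}u^x_i u^x_j$ together with the smallness of $u^x$ near $x$ produces the required vanishing of the Hessian in the $L^2$ Lebesgue sense.
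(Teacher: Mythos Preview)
Your proposal is correct and follows essentially the same approach as the paper: expand $\nabla v_k$ and $\Hess v_k$ via the Leibniz rules, use $u^x(x)=0$ together with \eqref{eq:Lebnablauiuj} for part (i), and for part (ii) split $\Hess v_k$ into the piece controlled by \eqref{eq:vansihHess} plus the remainder $\sum_{ij}\alpha^k_{ij}u^x_i\Hess u^x_j$, which is $O(t)$ in $L^2$ by the Lipschitz bound and the uniform Hessian control on $E$. The only cosmetic difference is that the paper writes the gradient and Hessian in symmetrized form before invoking $\alpha^k_{ij}=\alpha^k_{ji}$, whereas you use the symmetry immediately to simplify the expressions.
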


\begin{proof}
Employing the standard calculus rules, let us compute the derivatives of $v$:
\begin{equation}\label{eq:gradv}
\nabla v_k=\nabla u^x_k+\frac{1}{2}\sum_{i,j}\alpha^k_{ij}\left(u_i^x\nabla u^x_j+u^x_j\nabla u^x_i\right)
\end{equation}
\begin{equation}\label{eq:Hessv}
\Hess v_k=\Hess u^x_k+\sum_{i,j}\alpha^k_{ij}\left(u^x_i\Hess u^x_j+\nabla u^x_i\otimes\nabla u_j^x\right)\, .
\end{equation}
As $u^x(x)=0$, $\nabla u^x_i(x)\cdot\nabla u^x_j(x)=\delta_{ij}$ and \eqref{eq:Lebnablauiuj} holds by construction, \eqref{eq:gradv} shows that 
\begin{equation*}
\nabla v_i(x)\cdot \nabla v_j(x)=\delta_{ij}\, 
\end{equation*}
and 
\begin{equation*}
\lim_{t\downarrow 0}\fint_{B_t(x)}\abs{\nabla v_i\cdot\nabla v_j-\delta_{ij}}  \di\haus^N=0\, .
\end{equation*}
Then we estimate 
\begin{equation}\label{eq:esthessvk2}
\abs{\Hess v_k}^2\le 2\abs{\Hess u^x_k+\sum_{i,j}\alpha^k_{ij}\nabla u^x_i\otimes\nabla u^x_j}^2+2\abs{\sum_{i,j}\alpha^k_{ij}u_i^x\Hess u^x_j}^2\,.
\end{equation}
Integrating \eqref{eq:esthessvk2} over $B_t(x)$ and using the uniform Lipschitz estimates for $u$ 
\begin{equation*}
\abs{u^x_i(y)-u^x_i(x)}\le C(N)\dist(x,y)\, ,\quad\text{for any $i=1,\dots, N$},
\end{equation*}
we obtain
\begin{align*}
\fint_{B_t(x)}\abs{\Hess v_k}^2\di\haus^N\le &\; 2\fint_{B_t(x)}\abs{\Hess u^x_k+\sum_{i,j}\alpha^k_{ij}\nabla u^x_i\otimes\nabla u^x_j}^2\di\haus^N\\
&\; +C(N) t^2\fint_{B_t(x)}\abs{\Hess u^x}^2\di\haus^N\, .
\end{align*} 
By using  \eqref{eq:vansihHess} and \autoref{prop:goodHessian} (ii)(b), we conclude that
\begin{equation*}
\lim_{t\downarrow 0}\fint_{B_t(x)}\abs{\Hess v_k}^2\di\haus^N=0\, ,\quad\text{for any $k=1,\dots, N$}\, .
\end{equation*}

\end{proof}

\subsection{Volume estimates via almost splitting map with vanishing Hessian}
\label{subsec:volume via almost splitting}
Given  $v:B_1(p)\to\setR^N$ as in \autoref{lemma:algebraicomp} we introduce
 the function $r:B_1(p)\to [0,\infty)$ by 
\begin{equation}\label{eq:introrv}
r^2(y):=\sum_iv^2_i(y)\, .
\end{equation}
We aim at showing that $r$ is a polynomially good approximation of the distance from $x$ and, at the same time, it is an approximate solution of $\Delta r^2=2N$, in integral sense. With some algebraic manipulations and the standard chain rules, we obtain the following.

\begin{lemma}
With the same notation as above the following hold:
\begin{itemize}
\item[i)]
\begin{equation}\label{eq:laplar2}
\Delta r^2=2\sum_i\left[\abs{\nabla v_i}^2+v_i\Delta v_i\right]\, ;
\end{equation}
\item[ii)]
\begin{equation}\label{eq:nablar}
\abs{\abs{\nabla r}^2-1}\le \sum_{i,j}\abs{\nabla v_i\cdot\nabla v_j-\delta_{ij}}\, .
\end{equation}
\end{itemize}
\end{lemma}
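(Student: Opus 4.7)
Both identities are purely second-order differential calculus on $\RCD$ spaces in the sense of \cite{Gigli18}; no new geometric input is needed beyond the Leibniz/chain rules for the gradient and the Laplacian applied to the Lipschitz functions $v_1,\ldots,v_N$. The plan is to expand $r^2 = \sum_i v_i^2$ componentwise and to compare the outcome with the identities $\nabla r^2 = 2r \nabla r$ and the classical Leibniz rule for $\Delta$.

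For (i), I would apply the Leibniz rule for the Laplacian to each square: $\Delta(v_i^2) = 2|\nabla v_i|^2 + 2 v_i \Delta v_i$. Summing over $i$ yields \eqref{eq:laplar2} at once.

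For (ii), the plan is first to compute, via the standard chain rule, $\nabla r^2 = 2\sum_i v_i \nabla v_i$, whence
\begin{equation*}
|\nabla r^2|^2 \;=\; 4 \sum_{i,j} v_i v_j \, \nabla v_i \cdot \nabla v_j.
\end{equation*}
On the other hand, the chain rule for the composition of a Lipschitz function with $t\mapsto \sqrt{t}$ gives $\nabla r^2 = 2 r \nabla r$ $\haus^N$-a.e.\ on $\{r>0\}$, so that $|\nabla r^2|^2 = 4 r^2 |\nabla r|^2$ there. Subtracting the trivial identity $r^2 = \sum_i v_i^2 = \sum_{i,j} v_i v_j \delta_{ij}$ yields the key relation
\begin{equation*}
r^2 \bigl(|\nabla r|^2 - 1\bigr) \;=\; \sum_{i,j} v_i v_j \bigl(\nabla v_i \cdot \nabla v_j - \delta_{ij}\bigr).
\end{equation*}
Since $v_k^2 \le r^2$ for every $k$, the elementary estimate $|v_i v_j| \le r^2$ holds, and \eqref{eq:nablar} follows by dividing both sides by $r^2$.

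The only subtle point I anticipate is the behaviour on the zero set $\{r=0\} = \bigcap_i \{v_i=0\}$, where the last division is illegitimate. I would handle it via the Stampacchia-type locality of the minimal relaxed gradient: both $\nabla r$ and each $\nabla v_i$ vanish $\haus^N$-a.e.\ on $\{r=0\}$, so there $\bigl||\nabla r|^2 - 1\bigr| = 1$ while $\sum_{i,j}|\nabla v_i \cdot \nabla v_j - \delta_{ij}| = N\ge 1$, and the inequality holds trivially. Everything else is a routine computation, so I do not foresee any serious obstacle.
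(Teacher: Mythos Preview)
Your proposal is correct and follows essentially the same route as the paper: apply the Leibniz rule to each $v_i^2$ for (i), and for (ii) expand $|\nabla r^2|^2=4\sum_{i,j}v_iv_j\,\nabla v_i\cdot\nabla v_j$, subtract $4r^2=4\sum_{i,j}v_iv_j\delta_{ij}$, and use $|v_iv_j|\le r^2$ to divide through. Your explicit treatment of the zero set $\{r=0\}$ via locality is a nice extra bit of care that the paper leaves implicit.
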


\begin{proof}
The expression for the Laplacian \eqref{eq:laplar2} follows from the chain rule by the very definition $r^2=\sum_iv_i^2$.
\smallskip

In order to obtain the gradient estimate, we compute 
\begin{equation*}
\nabla \sum_iv_i^2=2\sum v_i\nabla v_i\, .
\end{equation*}
Hence
\begin{equation*}
\abs{\nabla r^2}^2=4\sum_{i,j}v_iv_j\nabla v_i\cdot\nabla v_j\, . 
\end{equation*}
Then we split
\begin{equation*}
\abs{\nabla r^2}^2=4\sum_{i,j}v_iv_j\delta_{ij}+4\sum_{i,j}v_iv_j\left(\nabla v_i\cdot\nabla v_j-\delta_{ij}\right)\, .
\end{equation*}
Hence 
\begin{equation*}
\abs{\abs{\nabla r^2}^2-4r^2}=4\abs{\sum_{i,j}v_iv_j\left(\nabla v_i\cdot\nabla v_j-\delta_{ij}\right)}\le 4r^2\sum_{i,j}\abs{\nabla v_i\cdot\nabla v_j-\delta_{ij}}\, , 
\end{equation*}
eventually proving \eqref{eq:nablar}.
\end{proof}

We will rely on the following technical result.

\begin{lemma}\label{lemma:doubling}
Let $(X,\dist,\haus^N)$ be an $\RCD(-(N-1),N)$ metric measure space. Let $v>0$. There exists a constant $C(N,v)>0$ such that for any $x\in X$, if $\haus^{N}(B_{3/2}(x)\setminus B_1(x))>v$, then for any nonnegative function $f\in L^{\infty}(B_1(x))$ and for almost every $0<t<1$ it holds
\begin{equation}\label{eq:estintfgeod}
\fint_{\partial B_t(x)}\int_0^tf(\gamma_y(s))\di s\di\haus^{N-1}(y)\le C(N,v)t\sup_{0<s<t}\fint_{B_s(x)}f\di\haus^N\, ,
\end{equation}
where for $\haus^N$-a.e. $y\in B_1(x)$ we denote by $\gamma_y$ the unique minimizing geodesic between $\gamma_y(0)=x$ and $\gamma_y(\dist(x,y))=y$. 
\end{lemma}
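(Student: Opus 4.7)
The strategy is a polar-coordinate-type decomposition of $\haus^N\res B_1(x)$ along the geodesics emanating from $x$, which reduces the left-hand side of \eqref{eq:estintfgeod} to an integral over $B_t(x)$, followed by Bishop--Gromov comparisons to recover ball averages. Concretely, the $1$-Lipschitz map $\dist_x$ induces a Cavalletti--Mondino disintegration $\haus^N\res B_1(x)=\int_Q h_\alpha\,\haus^1\res X_\alpha\,\di q(\alpha)$, where each $X_\alpha$ is a geodesic parametrised by arclength starting at $x$ and $h_\alpha$ is a one-dimensional $\CD(-(N-1),N)$ density. By essential non-branching of $\RCD$ spaces, the unique geodesic $\gamma_y$ from the statement coincides with $X_{\alpha(y)}$ for $\haus^N$-a.e.\ $y$. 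The coarea formula gives $\int_{\partial B_t(x)}F\,\di\haus^{N-1}=\int_Q F(X_\alpha(t))h_\alpha(t)\,\di q(\alpha)$ for a.e.\ $t$, so Fubini yields
\begin{equation*}
\int_{\partial B_t(x)}\!\int_0^t f(\gamma_y(s))\,\di s\,\di\haus^{N-1}(y)=\int_Q h_\alpha(t)\!\int_0^t f(X_\alpha(s))\,\di s\,\di q(\alpha).
\end{equation*}

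Next, the one-dimensional $\CD(-(N-1),N)$ condition on $h_\alpha$ yields the scale-invariant comparison $h_\alpha(t)\le C(N)(t/s)^{N-1}h_\alpha(s)$ for $0<s\le t\le 1$ (this is the 1D Bishop--Gromov inequality for $\CD$-densities). Substituting into the identity above, exchanging the order of integration, and running coarea backwards,
\begin{equation*}
\int_{\partial B_t(x)}\!\int_0^t f(\gamma_y(s))\,\di s\,\di\haus^{N-1}(y)\le C(N)\!\int_0^t\!\Bigl(\frac{t}{s}\Bigr)^{\!N-1}F'(s)\,\di s,
\end{equation*}
where $F(s):=\int_{B_s(x)}f\,\di\haus^N$, so that $F'(s)=\int_{\partial B_s(x)}f\,\di\haus^{N-1}$ for a.e.\ $s$ by coarea. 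Integration by parts rewrites the right-hand side as $F(t)+(N-1)t^{N-1}\int_0^t F(s)/s^N\,\di s$; the boundary term at $s=0$ vanishes because $F(s)\le\norm{f}_\infty\haus^N(B_s(x))=O(s^N)$ by the Bishop--Gromov upper bound $\haus^N(B_s(x))\le C(N)s^N$ valid in any noncollapsed $\RCD(-(N-1),N)$ space for $s\le 1$. The same upper bound gives $F(s)\le C(N)s^N M$ with $M:=\sup_{0<s<t}\fint_{B_s(x)}f\,\di\haus^N$, so the right-hand side is controlled by $C(N)t^N M$.

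Finally, the assumption $\haus^N(B_{3/2}(x)\setminus B_1(x))>v$ combined with coarea produces some $s_\star\in[1,3/2]$ with $\haus^{N-1}(\partial B_{s_\star}(x))\ge 2v$; Bishop--Gromov monotonicity of $\tau\mapsto\haus^{N-1}(\partial B_\tau(x))/v'_{-1,N}(\tau)$ then yields $\haus^{N-1}(\partial B_t(x))\ge c(N,v)\,t^{N-1}$ for a.e.\ $t\in(0,1)$, and dividing gives \eqref{eq:estintfgeod}. The main subtlety lies in the polar decomposition of the first step: one must justify, via the essential non-branching of $\RCD$ spaces, that the Cavalletti--Mondino rays agree $\haus^N$-a.e.\ with the geodesics $\gamma_y$ of the statement, and extract the scale-invariant pointwise comparison used in the second step from the abstract one-dimensional $\CD$-condition on $h_\alpha$.
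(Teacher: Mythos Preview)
Your argument is correct, but it takes a different route from the paper's.

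The paper first reduces to a pointwise slice inequality
\[
\fint_{\partial B_t(x)} f(\gamma_y(s))\,\di\haus^{N-1}(y)\le C(N,v)\fint_{\partial B_s(x)} f\,\di\haus^{N-1}\, ,\qquad\text{for a.e. }0<s\le t\le 1,
\]
and then simply integrates in $s$. That slice inequality is obtained by applying the $\mathsf{MCP}(-(N-1),N)$ property directly to the geodesic contraction $y\mapsto\gamma_y(s\,\dist(x,y)/t)$ restricted to a thin annulus $B_{(1+\eps)t}(x)\setminus B_t(x)$, using the two–sided volume bounds on annuli coming from the hypothesis, and then letting $\eps\downarrow 0$ so that normalized annulus measures converge weakly to sphere measures. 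No disintegration, no integration by parts.

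Your approach instead passes through the Cavalletti--Mondino ray decomposition of $\haus^N\res B_1(x)$ along $\dist_x$, extracts the ray--wise density comparison $h_\alpha(t)\le C(N)(t/s)^{N-1}h_\alpha(s)$, and then integrates by parts in the radial variable. The two arguments encode the same geometric fact (the Bishop--Gromov Jacobian bound for contraction toward $x$), but through different machinery: the paper's $\mathsf{MCP}$--on--annuli proof is shorter and avoids the localization technology, while your needle argument makes the ray--by--ray structure explicit and would adapt more readily to weighted or anisotropic variants. One point worth spelling out in your write--up: the inequality $h_\alpha(t)\le C(N)(t/s)^{N-1}h_\alpha(s)$ does \emph{not} follow from the bare differential inequality $(h_\alpha^{1/(N-1)})''\le h_\alpha^{1/(N-1)}$ on the open ray without an endpoint condition; you recover it by sending the left endpoint in the three--point $\CD$ inequality to $0$ and using only $h_\alpha>0$ there, which is exactly how one derives the $\mathsf{MCP}$ comparison from $\CD$ in one dimension. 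This step is correct but is the one place a reader might pause.
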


\begin{proof}
Under the assumption that $\haus^{N}(B_{3/2}(x)\setminus B_1(x))>v$, the following inequalities hold, by Bishop-Gromov monotonicity (for spheres) and the coarea formula:
\begin{equation}\label{eq:sph}
C(N,v)\le \frac{\haus^{N-1}(\partial B_t(x))}{t^{N-1}}\le C(N)\, ,\quad \text{for a.e. $0<t<1$}\, ,
\end{equation}
\begin{equation}\label{eq:vol}
C(N,v)\le \frac{\haus^N(B_t(x))}{t^N}\le C(N)\, ,\quad \text{for any $0<t<1$}
\end{equation}
and 
\begin{equation}\label{eq:crown}
C(N,v)\le \frac{\haus^N(B_{(1+\eps)t}(x)\setminus B_{t}(x))}{\eps t^N}\le C(N)\, ,
\end{equation}
for any $0<t<1$ and any $0<\eps<1/10$.

	It is enough to check that for any  nonnegative continuous function $f$ on $B_1(x)$ it holds
	\begin{equation}\label{z9}
		\fint_{\partial B_t(x)} f(\gamma_y(s)) \di \haus^{N-1}(y) \le C(N,v) \fint_{\partial B_s(x)} f(y)\di \haus^{N-1}(y)
	\end{equation}
     for a.e.  $0< s \le t\leq 1$. Indeed, if \eqref{z9} holds, then 
    \begin{equation*}
    	\begin{split}
    		\fint_{\partial B_t(x)}\int_0^tf(\gamma_y(s))\di s\di\haus^{N-1}(y) &
    		= \int_0^t \left( \fint_{\partial B_t(x)} f(\gamma_y(s)) \di \haus^{N-1}(y) \right) \di t
    		\\&
    		\le C(N,v) \int_0^t \fint_{\partial B_s(x)} f\di \haus^{N-1} \di s
    		\\&
    		\le C(N,v)t \sup_{0<s<t}\fint_{B_s(x)}f\di\haus^N \, ,
    	\end{split}
    \end{equation*}
 where we used \eqref{eq:sph} and \eqref{eq:vol} for the last inequality.\\
  Let us show \eqref{z9}. From the ${\mathsf{MCP}}(-(N-1), N)$ property (which is satisfied by $\CD(-(N-1),N)$ spaces and a fortiori for $\RCD(-(N-1),N)$ spaces) and \eqref{eq:crown}, we have
      \begin{equation*}
		\fint_{B_{(1+\eps)t}(x)\setminus B_{t}(x) } f(\gamma_y (s\, \dist(x,y)/t)) \di \haus^{N}(y) \le C(N,v) \fint_{B_{(1+\eps)s}(x)\setminus B_{s}(x) } f(y)\di \haus^{N}(y)
	\end{equation*}
    for any $0< s \le t\leq 1$. Passing to the limit as $\eps\downarrow 0$, taking into account the classical weak convergence of the normalized volume measure of the tubular neighbourhood to the surface measure for spheres, we obtain that \eqref{z9} holds for a.e. $0< s \le t \le 1$.
\end{proof}

\begin{proposition}\label{prop:asymptoticestimates}
Under the same assumptions and with the same notation as above, the following asymptotic estimates hold for the function $r:B_1(x)\to[0,\infty)$:
\begin{itemize}
\item[i)]
\begin{equation}\label{eq:rmeano}
\fint_{\partial B_t(x)}r\di\haus^{N-1}\le t+o(t^2)\,,  \quad\text{as $t\downarrow 0$}\, ;
\end{equation}
\item[ii)]
\begin{equation}\label{eq:nablarmeano}
\fint_{B_t(x)}\abs{\abs{\nabla r}-1}\di\haus^N=o(t)\, , \quad\text{as $t\downarrow 0$}\, ;
\end{equation}
\item[iii)] 
\begin{equation}\label{eq:laplar2meano}
\fint_{B_t(x)}\abs{\Delta r^2-2N}\di\haus^N=o(t)\, ,\quad\text{as $t\downarrow 0$}\, .
\end{equation}
\end{itemize}
\end{proposition}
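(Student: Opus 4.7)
The plan is to derive all three asymptotic estimates from the two key properties of $v$ established in \autoref{lemma:algebraicomp}: the Lebesgue-point identity $\nabla v_i\cdot\nabla v_j(x)=\delta_{ij}$, together with $\fint_{B_t(x)}|\Hess v|^2\di\haus^N\to 0$ as $t\downarrow 0$. The common preliminary step is to upgrade the Lebesgue-point convergence of $\nabla v_i\cdot\nabla v_j$ at $x$ to a quantitative $o(t)$ rate. Since $v$ is $C(N)$-Lipschitz and the Leibniz rule gives $\nabla(\nabla v_i\cdot\nabla v_j)=\Hess v_i(\nabla v_j,\cdot)+\Hess v_j(\nabla v_i,\cdot)$, we have the pointwise bound $|\nabla(\nabla v_i\cdot\nabla v_j)|\le C(N)|\Hess v|$. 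Combining the $\RCD$ Poincar\'e inequality with a standard dyadic telescopic argument around the Lebesgue point $x$, and applying Cauchy--Schwarz together with \eqref{eq:hess0}, one obtains
\begin{equation*}
\fint_{B_t(x)}|\nabla v_i\cdot\nabla v_j-\delta_{ij}|\di\haus^N\le C(N)\,t\,\sup_{0<s\le 2t}\fint_{B_s(x)}|\Hess v|\di\haus^N=o(t).
\end{equation*}

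With this in hand, (ii) and (iii) become essentially algebraic. For (ii), I would use the elementary inequality $\bigl||\nabla r|-1\bigr|\le\bigl||\nabla r|^2-1\bigr|$ (valid since $|\nabla r|\ge 0$) combined with \eqref{eq:nablar} to reduce the integral to $\sum_{i,j}\fint_{B_t(x)}|\nabla v_i\cdot\nabla v_j-\delta_{ij}|\di\haus^N$, which is $o(t)$ by the previous step. For (iii), the identity \eqref{eq:laplar2} yields $\Delta r^2-2N=2\sum_i(|\nabla v_i|^2-1)+2\sum_i v_i\,\Delta v_i$, and the first sum is handled exactly as in (ii). For the second, since $v(x)=0$ and $v$ is $C(N)$-Lipschitz, $|v_i(y)|\le C(N)t$ on $B_t(x)$, while $|\Delta v_i|\le\sqrt{N}|\Hess v_i|$, so
\begin{equation*}
\fint_{B_t(x)}\Bigl|\sum_i v_i\,\Delta v_i\Bigr|\di\haus^N\le C(N)\,t\,\fint_{B_t(x)}|\Hess v|\di\haus^N=o(t),
\end{equation*}
again by Cauchy--Schwarz and \eqref{eq:hess0}.

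For (i), the plan is to integrate $|\nabla r|$ along minimizing geodesics emanating from $x$. The function $r=|v|$ is Lipschitz with $r(x)=0$, so for $\haus^N$-a.e. $y\in B_1(p)$ the chain rule applied along the unique minimizing geodesic $\gamma_y:[0,\dist(x,y)]\to X$ from $x$ to $y$ yields
\begin{equation*}
r(y)\le\int_0^{\dist(x,y)}|\nabla r|(\gamma_y(s))\di s\le\dist(x,y)+\int_0^{\dist(x,y)}\bigl||\nabla r|-1\bigr|(\gamma_y(s))\di s.
\end{equation*}
Integrating this over $y\in\partial B_t(x)$ and applying the scaled version of \autoref{lemma:doubling} with $f=\bigl||\nabla r|-1\bigr|$, I obtain
\begin{equation*}
\fint_{\partial B_t(x)}r\di\haus^{N-1}\le t+C(N)\,t\,\sup_{0<s<t}\fint_{B_s(x)}\bigl||\nabla r|-1\bigr|\di\haus^N=t+o(t^2),
\end{equation*}
where the last equality uses (ii).

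The main technical points to verify carefully will be: (a) that \autoref{lemma:doubling} applies uniformly at all small scales $t$ around $x$, which reduces to checking $\haus^N(B_{3t/2}(x)\setminus B_t(x))\ge v\,t^N$ for small $t$ — a consequence of volume convergence at the regular point $x$, whose tangent cone is Euclidean; and (b) the validity of the chain rule $\frac{\di}{\di s}(r\circ\gamma_y)(s)=\langle\nabla r(\gamma_y(s)),\gamma_y'(s)\rangle$ for $\haus^N$-a.e. $y$ and a.e. $s$, which follows from the standard Sobolev-along-geodesics calculus in the $\RCD$ setting applied to the Lipschitz function $r$.
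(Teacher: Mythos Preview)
Your proof is correct and follows essentially the same route as the paper: both first upgrade \eqref{eq:normdelta} and \eqref{eq:hess0} to the $o(t)$ rate $\fint_{B_t(x)}|\nabla v_i\cdot\nabla v_j-\delta_{ij}|\di\haus^N=o(t)$ via Poincar\'e plus a dyadic telescope, then deduce (ii) and (iii) from \eqref{eq:nablar} and \eqref{eq:laplar2} together with the Lipschitz bound on $v$ and the trace identity $\Delta v_i=\tr\Hess v_i$, and finally obtain (i) by integrating $|\nabla r|$ along geodesics from $x$ and invoking \autoref{lemma:doubling} (whose annulus volume hypothesis holds at small scales by volume convergence at the regular point $x$). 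Your write-up is in fact slightly more explicit than the paper's in justifying the elementary inequality $\bigl||\nabla r|-1\bigr|\le\bigl||\nabla r|^2-1\bigr|$ and in flagging the chain rule along geodesics.
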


\begin{proof}
We start noticing that a standard application of the Poincar\'e inequality from \cite{VonRenesse08,Rajala12}, in combination with \eqref{eq:normdelta} and \eqref{eq:hess0}, shows that for any $i,j=1,\dots,N$ it holds 
\begin{equation}\label{eq:normquick}
\fint_{B_t(x)}\abs{\nabla v_i\cdot\nabla v_j-\delta_{ij}}\di\haus^N=o(t)\, ,\quad\text{as $t\downarrow 0$}\, .
\end{equation}
Given \eqref{eq:normquick}, \eqref{eq:nablarmeano} follows from \eqref{eq:nablar}.

In order to prove \eqref{eq:laplar2meano}, we employ \eqref{eq:laplar2} to estimate
\begin{equation*}
\abs{\Delta r^2-2N}\le 2 \sum_{i}\abs{1-\abs{\nabla v_i}^2}+2\sum_i\abs{v_i}\abs{\Delta v_i}\, .
\end{equation*}
Hence
\begin{equation*}
\fint_{B_t(x)}\abs{\Delta r^2-2N}\di\haus^N\le  2 \sum_i\fint_{B_t(x)}\abs{1-\abs{\nabla v_i}^2}\di\haus^N+2\sum_i\fint_{B_t(x)}\abs{v_i}\abs{\Delta v_i}\di\haus^N\, .
\end{equation*}
The first summand above can be dealt with via \eqref{eq:normquick}. In order to bound the second one we notice that 
\begin{align*}
\fint_{B_t(x)}\abs{v_i}\abs{\Delta v_i}\di\haus^N\le &C(N)t\fint_{B_t(x)}\abs{\Delta v_i}\di\haus^N\\
\le &C(N) t\fint_{B_t(x)}\abs{\Hess v_i}\di\haus^N\\
\le &C(N)t\left(\fint_{B_t(x)}\abs{\Hess v_i}^2\di\haus^N\right)^{\frac{1}{2}}\, ,
\end{align*}
where we used that $v_i$ is $C(N)$-Lipschitz with $v_i(x)=0$ and we rely on the known identity
\begin{equation*}
\Delta v=\tr\Hess v\, ,\quad\text{$\haus^N$-a.e.}
\end{equation*}
for $\RCD(K,N)$ spaces $(X,\dist,\haus^N)$, see \cite{Han18,DePhilippisGigli18}.
All in all this proves \eqref{eq:laplar2meano}.

\medskip

We are left to prove \eqref{eq:rmeano}.
In order to do so, we consider geodesics $\gamma_y$ from $x$ to $y$, where $\gamma(0)=x$ and $\gamma(t)=y$. This geodesic is unique for $\haus^N$-a.e. $y$, hence it is unique for $\haus^{N-1}$-a.e. $y\in\partial B_t(x)$ for $\Leb^1$-a.e. $t>0$. Then we notice that for $\haus^N$-a.e. $y\in B_1(x)$ (hence for $\haus^{N-1}$-a.e. $y\in \partial B_t(x)$ for $\Leb^1$-a.e. $t>0$)  it holds 
\begin{equation*}
\abs{r(y)-r(x)}\le \int_0^t\abs{\frac{\di}{\di s}r\circ\gamma_y(s)}\di s\le \int_0^t\abs{\nabla r}(\gamma_y(s))\di s\, . 
\end{equation*}
Now we can integrate on $\partial B_t(x)$ and get
\begin{align}
\nonumber \int_{\partial B_t(x)}&r(y)\di\haus^{N-1}(y)
\\\nonumber  &=\int_{\partial B_t(x)}\abs{r(y)-r(x)}\di\haus^{N-1}(y)\\
\nonumber &\le \int_{\partial B_t(x)}\int_0^t\abs{\nabla r}(\gamma_y(s))\di s\di\haus^{N-1}(y)\\
&\le  t\haus^{N-1}(\partial B_t(x))
+\int_{\partial B_t(x)}\int_0^t \abs{\abs{\nabla r}-1}(\gamma_y(s))\di s\di\haus^{N-1}(y)\, .\label{eq:boundavr}
\end{align}
In order to deal with the last summand, we notice that, by \autoref{lemma:doubling},
\begin{equation}\label{eq:tomean}
\fint_{\partial B_t(x)}\int_0^t \abs{\abs{\nabla r}-1}(\gamma_y(s))\di s\di\haus^{N-1}(y)\le C(N)t \sup_{0<s<t}\fint_{B_s(x)}\abs{\abs{\nabla r}-1}\di\haus^N\, . 
\end{equation}
The combination of \eqref{eq:boundavr} and \eqref{eq:tomean} proves that
\begin{equation}\label{eq:avr2}
\fint_{\partial B_t(x)}r\di\haus^{N-1}\le t+C(N)t \sup_{0<s<t} \fint_{B_s(x)}\abs{\abs{\nabla r}-1}\di\haus^N\, .
\end{equation}
Notice that the lower volume bound $\haus^N(B_{3/2}(x)\setminus B_1(x))>C(N)$ is satisfied by volume convergence, under the current assumptions.
Taking into account \eqref{eq:nablarmeano}, \eqref{eq:avr2} shows that \eqref{eq:rmeano} holds.
\end{proof}

\subsection{Proof of \autoref{prop:or}}

The proof is divided into three steps. In the first step, we show that on each $\delta$-regular ball at least half of the points satisfy
\begin{equation}\label{z12}
	\lim_{r\downarrow 0}\frac{1}{r}\abs{1-\frac{\haus^N(B_r(x))}{\omega_Nr^N}}=0
	\, .
\end{equation}
Then, in Step 2, we bootstrap this conclusion to get \eqref{z12} at $\haus^N$-a.e. $x$ in a $\delta$-regular ball. We conclude in Step 3 by covering $X$ with $\delta$-regular balls up to a $\haus^N$-negligible set.
\medskip

\textbf{Step 1.} Let $\delta = \delta(N,1/5)$ as in \autoref{prop:goodHessian}.
We claim that for any $\delta$-regular ball  $B_{4r}(p)\subset X$ there exists $E\subset B_r(p)$ such that
\begin{equation}\label{z11}
\haus^N(B_r(p)\setminus E) \le \frac{1}{5} \haus^N(B_r(p))\, , 
\end{equation}
\begin{equation}
	\lim_{r\downarrow 0}\frac{1}{r}\abs{1-\frac{\haus^N(B_r(x))}{\omega_Nr^N}}=0\, ,
	\qquad
	\text{for any $x\in E$} \, .
\end{equation}

\smallskip

Indeed, we can apply \autoref{prop:goodHessian} and find $E\subset B_r(p)$ satisfying \eqref{z11}. Moreover, for any $x\in E$ there exists a function $v:B_r(x)\to\setR^N$ that, up to scaling, has all the good properties guaranteed by \autoref{lemma:algebraicomp}.

Under these assumptions, we can apply \eqref{eq:integratedgeneral} to the map $r$ introduced in \eqref{eq:introrv} in terms of $v$. Recalling \autoref{prop:asymptoticestimates} (see also \eqref{eq:lower} and \eqref{eq:lower2} for the estimate for the negative part), \eqref{eq:integratedgeneral} shows that 
\begin{equation}
\lim_{r\downarrow 0}\frac{1}{r}\abs{1-\frac{\haus^N(B_r(x))}{\omega_Nr^N}}=0\, .
\end{equation}
\medskip

\textbf{Step 2.} Let $\delta'\le \delta'(\delta, N)$ with the property that if $B_{10}(p)$ is $\delta'$-regular then $B_r(x)$ is $\delta$-regular for any $x\in B_2(p)$ and $r<5$. We prove that if $B_{10}(p)$ is a $\delta'$-regular ball, then 
	\begin{equation}
		A_\eta:= \left\lbrace x\in B_1(p)\, : \,  \limsup_{r\downarrow 0} \frac{1}{r}\abs{1- \frac{\haus^N(B_r(x))}{\omega_N r^N}} > \eta\right\rbrace
	\end{equation}
	is $\haus^N$-negligible for any $\eta>0$.

\smallskip

Let us argue by contradiction. If this is not the case, we can find a Lebesgue point $x\in A_\eta$. In particular, there exists $r<1$ such that $\haus^N(B_r(x)\cap A_\eta)>\frac{1}{5} \haus^N(B_r(x))$. As $B_r(x)$ is a $\delta$-regular ball, the latter inequality contradicts Step 1.

\medskip

\textbf{Step 3.} We conclude the proof by observing that there is a family of $\delta'$-regular balls $\{B_{10 r_i}(x_i)\}_{i\in \setN}$ such that $\haus^N(X\setminus \bigcup_i B_{r_i}(x_i))=0$. This conclusion follows by a Vitali covering argument after recalling that $\haus^N$-a.e. point in $X$ has Euclidean tangent cone and taking into account the classical $\eps$-regularity theorem for almost Euclidean balls, see \cite{CheegerColding97,DePhilippisGigli18}.

\section{Control of the metric measure boundary}\label{sec:control of the mmb}

This section is devoted to the proof of the $\eps$-regularity \autoref{prop:boundary on regular balls}. The proof is divided into two main parts: an iteration lemma, where we establish uniform bounds for the deviation measures and vanishing of the metric measure boundary on an almost regular ball away from a set of small $(N-1)$-dimensional content; the iterative application of the lemma to establish the bounds and the limiting behaviour on the full ball.

\begin{lemma}[Iteration Lemma]\label{lemma:iteration}
	\label{lemma:Iteration}
	For every $\eps>0$, if $\delta \le \delta(N,\eps)$ the following property holds.
	If $(X,\dist,\haus^N)$ is an $\RCD(-\delta (N-1),N)$ m.m.s. and $B_{20}(p) \subset X$ is a $\delta$-regular ball, then there exists a Borel set $E\subset B_2(p)$ with the following properties:
	\begin{itemize}
		\item[i)] 	$|\mu_r| (E) \le \eps$
		for any $0<r<1$;
		\item[ii)] $B_1(p)\setminus E\subset \bigcup_i B_{r_i}(x_i)$ and $\sum_ir_i^{N-1}\le \eps$;
		\item[iii)] $\abs{\mu_r}(E)\to 0$ as $r\downarrow 0$.
	\end{itemize}
\end{lemma}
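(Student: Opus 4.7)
\textbf{Proof proposal for \autoref{lemma:iteration}.}

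The plan is to produce the set $E$ via the splitting-map regularity machinery borrowed from \cite{BrueNaberSemola22}, then to use the quantitative volume estimate \autoref{prop:quantitativevolcE} to turn the good pointwise behaviour of the (rescaled) splitting map into a bound on $|\mu_r|(E)$, and finally to upgrade this bound to a vanishing statement using \autoref{prop:or} together with dominated convergence.

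Since $B_{20}(p)$ is $\delta$-regular in an $\RCD(-\delta(N-1),N)$ space, the transformation theorem for splitting maps (as in the references of the outline) produces a harmonic $\delta'$-splitting map $u\colon B_{10}(p)\to\setR^N$ with $\delta'=\delta'(\delta,N)\downarrow 0$ as $\delta\downarrow 0$. Applying the covering/telescopic construction of \cite[Section~4]{BrueNaberSemola22} one obtains a bad set $F\subset B_2(p)$ and a covering
\begin{equation*}
B_1(p)\cap F\subset\bigcup_i B_{r_i}(x_i),\qquad \sum_i r_i^{N-1}\le \eps,
\end{equation*}
such that, setting $E:=B_2(p)\setminus F$, at every $x\in E$ there is a linear map $A_x\colon\setR^N\to\setR^N$ with $|A_x-\Id|\le C(N)\delta'$ so that $v^x:=A_x\circ u$ enjoys
\begin{equation*}
\fint_{B_r(x)}\mathcal{E}_{v^x}\di\haus^N\le C(N)\, r\, f(x)\qquad\text{for every }0<r<1,
\end{equation*}
for some nonnegative $f\in L^1(E)$ with $\|f\|_{L^1(E)}\le \eps/2$. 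This is the step that absorbs most of the technicality, since it requires (i) the weighted maximal function argument based on the $L^2$-Hessian bound \eqref{eq:(ii)}, and (ii) the geometric-series estimate on the small-content bad balls; these are precisely the ingredients already reviewed in the Outline of proof.

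Given $x\in E$, applying \autoref{prop:quantitativevolcE} to $v^x$ (which is still a $C(N)\delta'$-splitting map on $B_{10}(p)$) yields
\begin{equation*}
\frac{1}{r}\left|1-\frac{\haus^N(B_r(x))}{\omega_N r^N}\right|\le C(N)\left(r+\frac{1}{r}\int_0^r\fint_{B_{4t}(x)}\mathcal{E}_{v^x}\di\haus^N\frac{\di t}{t}\right)\le C(N)\bigl(r+f(x)\bigr).
\end{equation*}
Integrating over $E\subset B_2(p)$ gives, for every $r\in(0,1)$,
\begin{equation*}
|\mu_r|(E)\le C(N)\,r\,\haus^N(E)+C(N)\|f\|_{L^1(E)}\le C(N)\eps,
\end{equation*}
which proves (i) and (ii) after a harmless renaming of $\eps$.

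For (iii) the key point is that we already have the pointwise integrable majorant $C(N)(1+f)\in L^1(E)$ for the densities of $\mu_r$ on $E$, uniformly in $r\in(0,1)$. On the other hand \autoref{prop:or} asserts that at $\haus^N$-a.e.\ $x\in E$ the density $\tfrac{1}{r}|1-\haus^N(B_r(x))/(\omega_N r^N)|$ converges to $0$ as $r\downarrow 0$. Hence dominated convergence yields $|\mu_r|(E)\to 0$. The main obstacle is step two, i.e.\ proving that the exceptional set where the splitting map fails to remain quantitatively good at all scales can be covered with small $(N-1)$-content while retaining the scale-invariant decay $\fint_{B_r(x)}\mathcal{E}_{v^x}\,\di\haus^N\lesssim rf(x)$ with $f$ controlled in $L^1$; everything else is a routine combination of \autoref{prop:quantitativevolcE}, \autoref{prop:or}, and dominated convergence.
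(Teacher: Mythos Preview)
Your proposal is correct and follows essentially the same strategy as the paper's proof. The only difference is one of explicitness: where you black-box the output of \cite[Section~4]{BrueNaberSemola22}, the paper concretely defines $E:=\{x\in B_1(p):\sup_{0<r<5}r\fint_{B_r(x)}|\Hess u|^2\di\haus^N\le \eta(N)\}$, identifies the dominating function as $f=M_{10}|\Hess u|$ (the ordinary maximal function of $|\Hess u|$), and obtains the $L^1$ smallness of $f$ from the $L^2$ maximal inequality together with $\fint_{B_{20}(p)}|\Hess u|^2\le C(N)\delta'^2$.
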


We postpone the proof of the iteration lemma and see how to establish the $\eps$-regularity theorem by taking it for granted.

\subsection{Proof of \autoref{prop:boundary on regular balls} given \autoref{lemma:iteration}}
	Let us fix $r\in (0,1)$, and $\eps\le 1/5$, $\delta \le \delta(N,\eps)$ as in \autoref{lemma:Iteration}.
	 We assume that $B_{10}(p)$ is $\delta'$-regular for some $\delta'=\delta'(\delta,N)>0$ small enough so that $B_s(x)$ is $\delta$-regular for any $x\in B_5(p)$ and $s\le 5$. 
	 
	 We apply \autoref{lemma:Iteration} to get a Borel set $E_1\subset B_3(p)$ such that
	 \begin{itemize}
	 	\item[(a)] $|\mu_r|(E_1) \le \eps$;
	 	\item[(b)] $B_2(p)\setminus E_1 \subset \bigcup_a B_{r_a}(x_a) \cup \bigcup_b B_{r_b}(x_b) $ with $\sum_a r_a^{N-1} + \sum_b r_b^{N-1} \le \eps$;
	 	\item[(c)] $r_a \le r$ and $r_b > r$.
	 \end{itemize}
	 We set 
	 \begin{equation}\label{eq:defG1}
	 	G_1 := E_1 \cup \bigcup_a B_{r_a}(x_a) \, ,
	 \end{equation}
	 and observe that
	 \begin{equation*}
	 	\begin{split}
	 	|\mu_r|(G_1) & \le |\mu_r|(E_1) + \sum_a |\mu_r|(B_{r_a}(x_a))
	 	\\& \le \eps + \sum_a \frac{1}{r}\int_{B_{r_a(x_a)}} \abs{1-\frac{\haus^N(B_r(x))}{\omega_N r^N}}\di \haus^N(x)
	 	\\& \le \eps + C(N) \sum_a \frac{\haus^N(B_{r_a}(x_a))}{r}
	 	\\& \le \eps + C(N) \sum_a r_a^{N-1} \, .
	 	\end{split}
	 \end{equation*}
	  By (b) we deduce the existence of a constant $c(N)\ge 1$ such that
	  \begin{equation}\label{eq:estG1}
	  	|\mu_r|(G_1) \le c(N) \eps \, .
	  \end{equation}
      To control $|\mu_r|(B_{r_b}(x_b))$, we apply again \autoref{lemma:Iteration} to any ball $B_{r_b}(x_b)$. Arguing as above, we obtain a set $G_{b}$ (constructed analogously to $G_1$ as in \eqref{eq:defG1}) such that
      \begin{itemize}
      	\item[(a')] $|\mu_r|(G_{b})\le c(N)\eps r_b^{N-1}$;
      	\item[(b')] $B_{r_b}(x_b)\setminus G_{b} \subset \bigcup_{b_1} B_{r_{b,b_1}}(x_{b,b_1})$ and $\sum_{b_1} r_{b,b_1}^{N-1} \le \eps r_b^{N-1}$;
      	\item[(c')] $r_{b,b_1} > r$.
      \end{itemize}
       After two steps of the iteration we are left with a good set 
       \begin{equation*}
       	G_2 := G_1 \cup \bigcup_b G_b \, ,
       \end{equation*}
       such that
       \begin{equation*}
       	|\mu_r|(G_2) \le c(N) \eps + \sum_b |\mu_r|(G_b)
       	\le c(N)(\eps + \eps^2) \, ,
       \end{equation*}
       as a consequence of \eqref{eq:estG1}, (a') and (b). Moreover, 
       \begin{equation*}
       	B_2(p)\setminus G_2 \subset \bigcup_b \bigcup_{b_1} B_{r_{b,b_1}}(x_{b,b_1})\, ,
       \end{equation*}
       \begin{equation*}
       	\sum_{b,b_1} r_{b,b_1}^{N-1} 
       	\le \eps \sum_{b} r_b^{N-1} 
       	\le \eps^2 \, . 
       \end{equation*}
       If the family of bad balls $B_{r_{b,b_1}}(x_{b,b_1})$ is not empty, we iterate this procedure.
       At the $k$-th step, we have a good set $G_k$ such that
       \begin{equation*}
       	|\mu_r|(G_k)\le c(N)(\eps + \eps^2 + \ldots + \eps^k) \, ,
       \end{equation*}
       and bad balls satisfying
       \begin{equation*}
       	B_2(p)\setminus G_k \subset \bigcup_i B_{r_{i,k}}(x_{i,k}) \, ,
       	\quad \sum_k r_{i,k}^{N-1} \le \eps ^k \, , \quad r_{i,k} > r \, \, \, \forall \, i\in \setN \, .
       \end{equation*}
       Notice that $r_{i,k} \le \eps^{\frac{k}{N-1}}$, hence this procedure must stop after $M$ steps, for some $M\le (N-1) \frac{\log r}{\log \eps}$. Therefore, $B_2(p)\subset G_M$ and
       \begin{equation}\label{z4}
       	|\mu_r|(B_2(p)) 
       	\le c(N)(\eps +  \ldots + \eps^M)
       	\le 2c(N)\eps \, .
       \end{equation}
     The proof of \eqref{eq:uniformboundsep} is completed.
     
     \medskip

     Let us now prove that $|\mu_r|(B_1(p))\to 0$ as $r\downarrow 0$.	
     As a consequence of \eqref{z4}, we can extract a weak limit in $B_2(p)$
     \begin{equation*}
     	|\mu_{r_i}| \to \mu \, , \qquad
     	\text{as $r_i\to 0 $} \, .
     \end{equation*}
     By the scale invariant version of \eqref{eq:uniformboundsep}, we deduce
     \begin{equation}\label{z5}
        \mu(B_s(x)) \le \eps s^{N-1} \, ,
        \qquad
        \text{for any $x\in B_1(p)$ and $s<1$} \, .
     \end{equation}
     To conclude the proof, it is enough to show that $\mu(B_{1}(p))=0$.
     To this aim we apply an iterative argument analogous to the one above. Using the iteration \autoref{lemma:iteration}, we cover
     \begin{equation*}
     	B_1(p) \setminus E \subset \bigcup_i B_{r_i}(x_i) \, , 
     	\qquad
     	\sum_i r_i^{N-1}\le \eps \, ,
     \end{equation*}
     and observe that
     \begin{equation*}
     	\mu(B_1(p))\le \mu(E) + 
     	\sum_i \mu(B_{r_i}(x_i)) \le \eps\sum_i r_i^{N-1} \le \eps^2\, ,
     \end{equation*}
     where we used \eqref{z5} and \autoref{lemma:iteration} (iii).
     
     We apply the same decomposition to each ball
     \begin{equation*}
     	B_{r_i}(x_i)\setminus E_i \subset \bigcup_j B_{r_{i,j}}(x_{i,j}) \, ,
     	\qquad
     	\sum_j r_{i,j}^{N-1} \le \eps r_i^{N-1} \, ,
     \end{equation*}
     obtaining
     \begin{equation*}
     	\mu(B_1(p)) = \sum_{i,j} \mu(B_{r_{i,j}}(x_{i,j})) 
     	\le \eps \sum_{i,j} r_{i,j}^{N-1}
     	\le \eps^3 \, .
     \end{equation*}
     After $k$ steps of the iteration we deduce $\mu(B_1(p)) \le \eps^{k+1}$. We conclude by letting $k\to \infty$.

\subsection{Proof of \autoref{lemma:Iteration}}

Let $\delta'=\delta'(\eps,N)>0$ to be chosen later.
If $\delta \le \delta(\delta',\eps,N)$, we can build a $\delta'$-splitting map $u:B_{10}(p)\to\setR^N$. We consider the set $E$ of those points $x\in B_1(p)$ such that 
\begin{equation*}
\sup_{0<r<5}r\fint_{B_r(x)}\abs{\Hess u}^2\di\haus^N\le \eta\, ,
\end{equation*}
where $\eta=\eta(N)$ will be chosen later.
For any $x\in E$ and $r\le 1$, the Poincar\'e inequality gives
\begin{align*}
	\abs{ \fint_{B_{2r}(x)} \mathcal{E}\di \haus^N - \fint_{B_r(x)} \mathcal{E}\di \haus^N }
	& \le C(N) r \fint_{B_{3r}(x)} |\Hess u|\di \haus^N
	\\& 
	\le C(N)\left(  r^2 \fint_{B_{3r}(x)} |\Hess u|^2\di \haus^N  \right)^{1/2}
	\\& 
	\le C(N) \eta^{1/2} \, ,
\end{align*}
which along with a telescopic argument (cf. with the proof of \cite[Lemma 4.16]{BrueNaberSemola22}) gives
\begin{equation*}
	\fint_{B_r(x)} \mathcal{E} \di \haus^N \le \delta' + C(N)\eta^{1/2} \, ,
	\quad\text{for any $x\in E$ and $r<5$} \, .
\end{equation*}
We assume $\eta=\eta(N)$ and $\delta'=\delta'(\eps,N)$ small enough so that $\delta' + C(N)\eta^{1/2}\le \delta_0(N)$, where the latter is given by \autoref{lemma:linear algebra}. 
For any $x\in E$, we apply \autoref{lemma:linear algebra} with $A_{ij}= \nabla u_i(x)\cdot \nabla u_j(x)$ and we get a matrix $B_x\in \setR^{N\times N}$ such that $|B_x|\le C(N)$ and $v:= B_x \circ u: B_{10}(p)\to\setR^N$ verifies
\begin{equation*}
	\nabla v_i(x)\cdot \nabla v_j(x)=\delta_{ij}\, ,\quad i,j=1,\dots, N\, .
\end{equation*}
Since, by construction, any point $x\in E$ is a Lebesgue point for $\nabla u_i \cdot \nabla u_j$ (and thus for $\nabla v_i \cdot \nabla v_j$), it follows that
\begin{equation*}
\lim_{s\downarrow 0} \fint_{B_s(x)}\abs{\nabla v_i\cdot\nabla v_j-\delta_{ij}}\di\haus^N=0\,.
\end{equation*}
Moreover, it is clear that $|\Hess v|\leq C(N) |\Hess u|$. 

Applying again a telescopic argument based on the Poincar\'e inequality we infer that 
\begin{align}
\fint_{B_r(x)}&\abs{\nabla v_i\cdot\nabla v_j-\delta_{ij}}\di\haus^N \nonumber
\\& \le \lim_{s\downarrow 0} \fint_{B_s(x)}\abs{\nabla v_i\cdot\nabla v_j-\delta_{ij}}\di\haus^N
+ C(N)r \sup_{s<10}\fint_{B_s(x)}\abs{\Hess v}\di \haus^N \nonumber 
\\&
\le C(N)r \sup_{s<10}\fint_{B_s(x)}\abs{\Hess v}\di \haus^N \nonumber
\\ &  \le C(N)r \sup_{s<10}\fint_{B_s(x)}\abs{\Hess u}\di \haus^N  \nonumber
\\& = C(N)r M_{10} |\Hess u|(x) 
\, , \label{eq:nablavivjM10Hes}
\end{align}
for any $0<r<1$, where 
\begin{equation}
M_{10}|\Hess u|(x):= \sup_{s<10}\fint_{B_s(x)}\abs{\Hess u}\di \haus^N
\end{equation}
is the maximal function of $\abs{\Hess u}$.

Combining  \autoref{cor:gapBGharm} with \eqref{eq:nablavivjM10Hes} gives
\begin{equation*}
\frac{1}{r}\left(1-\frac{\haus^N(B_r(x))}{\omega_Nr^N}\right)\le C(N)M_{10}\abs{\Hess u}(x)\, ,
\end{equation*}
for $\haus^N$-a.e. $x\in E$ and for any $0<r<1$. 
In particular, 
\begin{equation*}
\mu_r\res E\le C(N)M_{10}\abs{\Hess u}\haus^N\res E\, ,
\qquad
\text{for any $0<r<1$} \, .
\end{equation*}
The classical $L^2$ maximal function estimate gives
\begin{equation}\label{eq:contronorm}
\mu_r(E)\le C(N)\int_{B_1(p)}M_{10}\abs{\Hess u}\di\haus^N\le C(N)\left(\fint_{B_{20}(p)}\abs{\Hess u}^2\di\haus^N\right)^{\frac{1}{2}}\, .
\end{equation}
Therefore, $M_{10}\abs{\Hess u}$ is integrable and dominates uniformly the sequence
\begin{equation*}
f_r:=\frac{1}{r}\abs{1-\frac{\haus^N(B_r(x))}{\omega_Nr^N}} \, ,
\quad
r\in (0,1)\, ,
\end{equation*}
on $E$. Moreover, $f_r(x) \to 0$ as $r\downarrow 0$
for $\haus^N$-a.e. $x\in X$ by \autoref{prop:or}. Hence by the dominated convergence theorem 
\begin{equation*}
\abs{\mu_r}(E) \to 0 \, ,
\qquad
\text{as $r\downarrow 0$} \, ,
\end{equation*}
proving (iii).

\medskip

In order to get the $(N-1)$-dimensional content bound (ii), we employ a weighted maximal function argument (see for instance the proof of \cite[Proposition 4.19]{BrueNaberSemola22}) to show that $B_1(p)\setminus E$ can be covered by a countable union of balls $\bigcup_iB_{r_i}(x_i)$ with 
\begin{equation*}
\sum_ir_i^{N-1}\le \delta''(\delta',N)\, .
\end{equation*}
Indeed, for any $x\in B_1(p)$ such that 
\begin{equation*}
\sup_{0<r<1}r\fint_{B_r(x)}\abs{\Hess u}^2\di\haus^N>\eta(N)\, ,
\end{equation*}
we set $r_x>0$ to be the maximal radius such that 
\begin{equation*}
\frac{r_x}{5}\fint _{B_{r_x/5}(x)}\abs{\Hess u}^2\di\haus^N\ge \eta(N)\, .
\end{equation*}
By Ahlfors regularity of $\haus^N$, we immediately deduce that
\begin{equation}
\int_{B_{r_x/5}(x)}\abs{\Hess u}^2\di\haus^N\ge C(N) r_x^{N-1}\, .
\end{equation}
By a Vitali covering argument, we can cover $E$ with a countable union $B_{r_i}(x_i)$ such that $B_{r_i/5}(x_i)$ are disjoint. Then
\begin{align}
\nonumber \sum_ir_i^{N-1}\le & C(N)\sum_i \int_{B_{r_i/5}(x_i)}\abs{\Hess u}^2\di\haus^N\\
\le & C(N)\int_{B_2(p)}\abs{\Hess u}^2\di\haus^N\le C(N)\delta'\, .\label{eq:controbad}
\end{align}
This completes the proof of (i) and (ii) after choosing $\delta'=\delta'(\eps,N)$ small enough so that the right hand sides in \eqref{eq:contronorm} and \eqref{eq:controbad} are smaller than $\eps$.

\section{Spaces with boundary}\label{sec:spaces with boundary}

In this section we aim at controlling the metric measure boundary on $\RCD(-(N-1),N)$ spaces $(X,\dist,\haus^N)$ with boundary satisfying fairly natural regularity assumptions.

Let $0<\delta\le 1$ be fixed.
We consider an $\RCD(-\delta(N-1),N)$ space $(X,\dist,\haus^N)$ with boundary and we recall that a \textit{$\delta$-boundary ball} $B_1(p)\subset X$, is a ball satisfying
\begin{equation}
	\dist_{GH}\left( B_1(p), B_1^{\setR^N_+}(0)  \right) \le \delta \, .
\end{equation}
We will assume that the following conditions are met:
\begin{itemize}
	\item[(H1)] The doubling $(\hat X, \hat \dist, \haus^N)$ of $X$ obtained by gluing along the boundary is an $\RCD(-\delta(N-1),N)$ space.
	
	\item[(H2)] A Laplacian comparison for the distance from the boundary holds:
	\begin{equation}\label{eq:(H2)}
		\Delta \dist_{\partial X} 
		\le - \delta(N-1) \dist_{\partial X}
		\qquad
		\text{on $X\setminus \partial X$}\, .
	\end{equation}
\end{itemize}
It is still unknown whether (H1) and (H2) hold true in the $\RCD$ class. However, they are satisfied on Alexandrov spaces, see \cite{Perelman91,AlexanderBishop03,Petrunin07}, and noncollapsed GH-limits of manifolds with convex boundary and Ricci curvature bounded from below in the interior, see \cite{BrueNaberSemola22}.	
\smallskip

We shall denote by
\begin{equation}\label{eq:Vr}
	V_r(s) : = \frac{\Leb^N(B_r((0,s))\cap \{x_N>0\})}{\omega_N r^N} \, ,
\end{equation}
where $(0,s)\in \setR^{N-1}\times \setR_+$. Moreover, we set 
\begin{equation}\label{eq:defgammaN}
\gamma(N) := \omega_{N-1}\int_0^1(1-V_1(t))\di t\, .
\end{equation}

\smallskip
Under the assumptions above, our main result is the following.

\begin{theorem}\label{thm:boundarymain}
	Let $(X,\dist,\haus^N)$ be an $\RCD(-(N-1),N)$ space with boundary satisfying (H1) and (H2). Let $p\in X$ and assume $\haus^N(B_1(p))\ge v>0$. Then
	\begin{equation}
		\mu_r(B_2(p))\le C(N,v) \, ,
		\quad
		\text{for any $r>0$} \, .
	\end{equation}
    Moreover, 
    \begin{equation}\label{eq:weaklimitmurBdary}
    	\mu_r \weakto  \gamma(N) \haus^{N-1}\res \partial X\, ,
    	\qquad
    	\text{in $B_1(p)$ as $r\downarrow 0$}\,,
    \end{equation}
where $\gamma(N)>0$ is the constant defined in \eqref{eq:defgammaN}.
   
\end{theorem}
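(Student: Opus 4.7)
The plan is to combine three ingredients: the decomposition theorem \autoref{thm:decompositiontheorem} to reduce the problem to boundary balls and regular balls; the $\eps$-regularity \autoref{prop:boundary on regular balls} to control the regular portion; and the doubling $\hat X$ (an $\RCD(-(N-1),N)$ space without boundary by (H1)) to reduce the boundary contribution to a computable ``reflected-volume'' density. For the uniform bound $\mu_r(B_2(p))\le C(N,v)$ I would mimic the argument in the proof of \autoref{thm:mms0}, choosing $\eta=\delta(N,1/10)$ in \autoref{thm:decompositiontheorem}, but now retaining the boundary balls $B_{r_a}(x_a)$. The regular balls are controlled exactly as before. On each boundary ball, Bishop-Gromov gives the crude estimate $|\mu_r|(B_{r_a}(x_a))\le C(N)r_a^{N-1}$; summing via $\sum_a r_a^{N-1}\le C(N,v)$ closes the bound.

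For the weak convergence the key is to compare $\mu_r$ with the analogous object $\hat\mu_r$ on the doubling. Let $\iota_1,\iota_2\colon X\to\hat X$ denote the two isometric embeddings, identified on $\partial X$. The intrinsic nature of the doubling metric yields
\begin{equation*}
	B_r^{\hat X}(\iota_1(x))=\iota_1(B_r^X(x))\cup\iota_2(A_r(x)),\qquad A_r(x):=\{y\in X\,:\,\dist_{\hat X}(\iota_1(x),\iota_2(y))<r\},
\end{equation*}
with $A_r(x)=\emptyset$ whenever $\dist_{\partial X}(x)\ge r$. This gives the pointwise decomposition
\begin{equation*}
	\mu_r=\hat\mu_r|_{\iota_1(X)}+\nu_r,\qquad \nu_r:=\frac{\haus^N(A_r(\cdot))}{\omega_N r^{N+1}}\haus^N.
\end{equation*}
By \autoref{thm:mms0} applied to $\hat X$ (boundaryless thanks to (H1)), $\hat\mu_r\weakto 0$, and the problem reduces to showing $\nu_r\weakto\gamma(N)\haus^{N-1}\res\partial X$.

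To analyze $\nu_r$ I would use the coarea formula for the $1$-Lipschitz function $\dist_{\partial X}$ together with the change of variable $s=rt$:
\begin{equation*}
	\int\phi\,\di\nu_r=\int_0^1\int_{\{\dist_{\partial X}=rt\}}\phi(y)\,\frac{\haus^N(A_r(y))}{\omega_N r^N}\,\di\haus^{N-1}(y)\,\di t.
\end{equation*}
Hypothesis (H2), via the Gauss-Green calculus on $\RCD$ spaces, provides the monotonicity ensuring that $\haus^{N-1}\res\{\dist_{\partial X}=rt\}$ converges weakly to $\haus^{N-1}\res\partial X$ as $r\downarrow 0$. Applying \autoref{thm:decompositiontheorem} to $\hat X$ shows that $\haus^{N-1}$-a.e.\ $y\in\partial X$ lies in boundary balls which, after rescaling, are Gromov-Hausdorff close to the Euclidean half-ball; on such balls a boundary adaptation of the quantitative volume convergence \autoref{prop:quantitativevolcE} yields $\haus^N(A_r(y))/(\omega_N r^N)\to 1-V_1(t)$ as $r\downarrow 0$. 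Passing to the limit and integrating in $t$ produces the constant $\gamma(N)$, according to the definition \eqref{eq:defgammaN}.

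The main obstacle will be upgrading these two limits to quantitative statements with integrable error terms, so as to justify a dominated-convergence passage in the coarea representation of $\nu_r$. On one side, pinning down $\haus^N(A_r(y))/(\omega_N r^N)$ on arbitrary boundary balls requires a boundary analogue of \autoref{prop:boundary on regular balls}, for which (H2) is essential. On the other side, (H2) also supplies the Gauss-Green identity needed to promote Gromov-Hausdorff closeness of level sets of $\dist_{\partial X}$ to weak convergence of their surface measures. Once both ingredients are in place, their combination yields $\nu_r\weakto\gamma(N)\haus^{N-1}\res\partial X$ and completes the identification of the metric measure boundary.
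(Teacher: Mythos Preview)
Your strategy is essentially the paper's, reorganized. The paper isolates the boundary contribution through an intermediate $\eps$-regularity theorem on boundary balls (\autoref{thm:eps-reg boundary}): if $B_{10}(p)$ is a $\delta$-boundary ball then $|\mu_r|(B_1(p))\le C(N)$ and $\limsup_{r\downarrow 0}|\mu_r(B_1(p))-\gamma(N)|\le\eps$. That theorem is proved with exactly your ingredients---the doubling, the coarea formula for $\dist_{\partial X}$, the almost-monotonicity of the level-set measures via (H2) (\autoref{lemma:almostmonotonicity}), and the structure theorem for boundary balls \cite[Theorem~8.1]{BrueNaberSemola22}---and then the weak limit is identified by a blow-up/differentiation argument on $\partial X$. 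Your global decomposition $\mu_r=\hat\mu_r+\nu_r$ is the exact pointwise version of the paper's local splitting \eqref{eq:split}, and your coarea computation of $\nu_r$ mirrors \eqref{eq:VrcoareaSigma}. The paper's organization neatly resolves the dominated-convergence obstacle you flag: on a fixed $\delta$-boundary ball, \cite[Theorem~8.1]{BrueNaberSemola22} gives the \emph{uniform} bound $\bigl|V_r(\dist_{\partial X}(x))-\haus^N(B_r(x))/(\omega_Nr^N)\bigr|\le\eps$ for all $x$ in the ball, not merely a.e.\ convergence, which makes the passage to the limit routine. Note also that what you call ``a boundary adaptation of \autoref{prop:quantitativevolcE}'' is not an adaptation of that proposition at all but the structure theorem just cited, a pre-existing and rather different result; and applying \autoref{thm:decompositiontheorem} to $\hat X$ produces only regular balls, so that step as written does not yield the boundary-ball information you claim.

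There is one genuine gap in the uniform-bound argument. The sentence ``Bishop--Gromov gives the crude estimate $|\mu_r|(B_{r_a}(x_a))\le C(N)r_a^{N-1}$'' is false when $r<r_a$: Bishop--Gromov only yields $|\mu_r|(B_{r_a}(x_a))\le C(N)r_a^N/r$, which is what the paper uses in \eqref{eq:r>} for the regime $r\ge r_b$; for $r<r_a$ a separate argument is required. The fix is to use your own decomposition already here: write $|\mu_r|(B_{r_a}(x_a))\le|\hat\mu_r|(\iota_1(B_{r_a}(x_a)))+\nu_r(B_{r_a}(x_a))$, bound the first term via \autoref{prop:boundary on regular balls} applied on $\hat X$ (the doubled $\delta$-boundary ball is a $\delta$-regular ball), and bound the second by the tubular-neighbourhood estimate $\haus^N(B_{r_a}(x_a)\cap\{\dist_{\partial X}<r\})\le C(N)r\,r_a^{N-1}$ from \cite[Theorem~1.4]{BrueNaberSemola22}. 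This is precisely how the paper obtains \eqref{eq:boundary on boundary balls}.
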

The proof of \autoref{thm:boundarymain} is based on an $\eps$-regularity theorem for the metric measure boundary on $\delta$-boundary balls, the $\eps$-regularity \autoref{prop:boundary on regular balls} for regular balls and the boundary-interior decomposition \autoref{thm:decompositiontheorem}. Below we state the $\eps$-regularity theorem for boundary balls, and use it to complete the proof of \autoref{thm:boundarymain}. The rest of this section will be dedicated to the proof of the $\eps$-regularity theorem.
\smallskip

\begin{theorem}[$\eps$-regularity on boundary balls]\label{thm:eps-reg boundary}
For any $\eps>0$ if $\delta\le \delta(\eps,N)$ the following holds.
For any $\RCD(-\delta(N-1),N)$ space $(X,\dist,\haus^N)$ satisfying the assumptions (H1),(H2), if $B_{10}(p)\subset X$ is a $\delta$-boundary ball, then
\begin{equation}\label{eq:boundary on boundary balls}
	|\mu_r|(B_1(p)) \le C(N)\, ,
	\qquad
	\text{for any $r>0$} \, .
\end{equation}
	Moreover
	\begin{equation}\label{eq:to 0 on boundary balls}
     \limsup_{r\downarrow 0}\abs{\mu_r(B_1(p))-\gamma(N)}\le \eps\, .
	\end{equation}
\end{theorem}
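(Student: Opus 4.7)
The plan is to prove \eqref{eq:boundary on boundary balls}--\eqref{eq:to 0 on boundary balls} by reducing to the interior $\varepsilon$-regularity \autoref{prop:boundary on regular balls} applied in the doubled space $\hat X$ of (H1), and then extracting the boundary contribution via a coarea decomposition in $\dist_{\partial X}$, whose calculus is regularised by (H2). Denote by $\sigma$ the canonical involution of $\hat X$ and by $X_+,X_-$ the two isometric copies, with $X_+$ identified with $X$. By (H1), $\hat X$ is $\RCD(-\delta(N-1),N)$, and the $\delta$-closeness of $B_{10}(p)$ to the Euclidean half-ball upgrades, after doubling, to a $C(N)\delta$-closeness of $B_{10}(p)\subset\hat X$ to $B_{10}^{\setR^N}(0)$, so that \autoref{prop:boundary on regular balls} applies in $\hat X$. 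For every $x\in X_+$ one has the pointwise identity
\begin{equation*}
1-\frac{\haus^N(B_r^X(x))}{\omega_N r^N}=\left(1-\frac{\haus^N(B_r^{\hat X}(x))}{\omega_N r^N}\right)+\frac{\haus^N(B_r^{\hat X}(x)\cap X_-)}{\omega_N r^N},
\end{equation*}
so that integrating against $\haus^N$ on $B_1(p)$ splits $\mu_r$ into a contribution from $\mu_r^{\hat X}$, which has vanishing total variation as $r\downarrow 0$ by \autoref{prop:boundary on regular balls} in $\hat X$, and a contribution $\nu_r$ supported in the strip $\{\dist_{\partial X}<r\}$.

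To estimate $\nu_r$ quantitatively I would build a splitting map $\hat u:B_{10}(p)\to\setR^N$ in $\hat X$ and, by averaging with $\sigma$, arrange that $\hat u_1,\dots,\hat u_{N-1}$ are $\sigma$-invariant while $\hat u_N$ is $\sigma$-antiinvariant; antisymmetry forces $\hat u_N\equiv 0$ on $\partial X$, and combining this with the standard splitting estimates and the Laplacian comparison (H2) yields $\hat u_N\approx \pm\dist_{\partial X}$ quantitatively, up to errors that vanish with $\delta$. Applying the quantitative volume convergence \autoref{prop:quantitativevolcE} to $\hat u$ at the reflected point $\sigma(x)$ and using $B_r^{\hat X}(x)\cap X_-=\sigma(B_r^{\hat X}(\sigma(x))\cap X_+)$, I would obtain
\begin{equation*}
\frac{\haus^N(B_r^{\hat X}(x)\cap X_-)}{\omega_N r^N}=\bigl(1-V_1(\dist_{\partial X}(x)/r)\bigr)+o_\delta(1)
\end{equation*}
uniformly in $x\in X_+\cap B_1(p)\cap\{\dist_{\partial X}<r\}$ and in $r\in(0,1)$. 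Substituting this into $\nu_r$ and applying coarea to $\dist_{\partial X}$, using $|\nabla\dist_{\partial X}|=1$ a.e.\ (which comes from (H2) via the Gauss--Green formula in the spirit of \cite{BruePasqualettoSemola19}), after the change of variable $s=rt$ the strip integral becomes $\int_0^1(1-V_1(t))\,\haus^{N-1}(\{\dist_{\partial X}=rt\}\cap B_1(p))\di t+o_\delta(1)$, whose limit as $r\downarrow 0$ produces $\gamma(N)$ modulo $\varepsilon$ thanks to the boundary-stability results of \cite{BrueNaberSemola22} together with the $\delta$-closeness of $\partial X\cap B_1(p)$ to $B_1^{\setR^{N-1}}(0)$.

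For the uniform bound \eqref{eq:boundary on boundary balls} I would complement the above with the crude pointwise estimate $|1-\haus^N(B_r^X(x))/(\omega_N r^N)|\le C(N)$ (from the trivial lower bound and Bishop--Gromov) together with the thickness bound $\haus^N(\{\dist_{\partial X}<r\}\cap B_1(p))\le C(N)r$, which I would derive from (H2) by testing $\Delta\dist_{\partial X}\le 0$ against a Lipschitz cut-off supported in $B_2(p)$. The main obstacle will be the quantitative identification of the reflected-half volume with the Euclidean half-ball model: splitting maps in $\hat X$ do not by themselves distinguish the two copies $X_\pm$, and the central input is to show that $X_+$ is quantitatively close to $\{\hat u_N>0\}$. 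This is where the antisymmetry of $\hat u_N$, the Laplacian comparison (H2), and the sharp Lipschitz bound \eqref{eq:(i)} for splitting maps must be combined; once this identification is in place, the rest of the argument is a matter of bookkeeping of uniform rates.
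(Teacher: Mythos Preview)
Your overall architecture---double via (H1), apply the interior $\eps$-regularity in $\hat X$, and isolate the strip $\{\dist_{\partial X}<r\}$ via coarea---matches the paper. The gap is in how you compute the strip contribution. You propose to obtain
\[
\frac{\haus^N(B_r^{\hat X}(x)\cap X_-)}{\omega_N r^N}=1-V_1(\dist_{\partial X}(x)/r)+o_\delta(1)
\]
by applying \autoref{prop:quantitativevolcE} at $\sigma(x)$. But \autoref{prop:quantitativevolcE} only controls \emph{full} ball volumes, not the volume of the intersection with a super- or sublevel set of a coordinate; applying it at $\sigma(x)$ just reproduces the same full-ball estimate by $\sigma$-invariance. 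To get a half-ball volume you would need a quantitative volume-preserving identification of $X_+$ with $\{\hat u_N>0\}$, and this is precisely the obstacle you flag but do not resolve. Since $\delta$-splitting maps need not be bi-Lipschitz (they can degenerate at small scales, as the paper emphasizes in the introduction), the passage from $\hat u_N\approx\dist_{\partial X}$ in an integral sense to a sharp identification of sublevel sets with uniform volume error is not automatic, and in fact this route is not the one the paper takes.

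The paper bypasses this entirely: instead of computing $\haus^N(B_r^{\hat X}(x)\cap X_-)$, it invokes the boundary structure theorem \cite[Theorem~8.1]{BrueNaberSemola22} together with volume convergence to get directly
\[
\Bigl|\,\frac{\haus^N(B_r^X(x))}{\omega_N r^N}-V_r(\dist_{\partial X}(x))\,\Bigr|\le\eps
\]
for all $x\in B_1(p)$ and small $r$, with no splitting-map manipulation. After coarea this yields your strip integral $\int_0^1(1-V_1(t))\,\haus^{N-1}(\Sigma_{rt,B_1(p)})\,\di t$. A second point you gloss over is the limit $\haus^{N-1}(\Sigma_{rt,B_1(p)})\to\haus^{N-1}(\partial X\cap B_1(p))$ as $r\downarrow 0$: boundary stability from \cite{BrueNaberSemola22} gives $\haus^{N-1}(\partial X\cap B_1(p))\approx\omega_{N-1}$, but the convergence of the level-set measures is a separate statement. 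The paper proves it via an almost-monotonicity lemma for $s\mapsto\haus^{N-1}(\Sigma_{s,\overline B_1(p)})$, obtained by applying Gauss--Green to $\nabla\dist_{\partial X}$ and using (H2); this is where (H2) enters most substantially, beyond the tubular-neighbourhood bound you mention.
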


Let us discuss how to complete the proof of \autoref{thm:boundarymain}, taking \autoref{thm:eps-reg boundary} for granted:
The combination of \autoref{thm:eps-reg boundary}, \autoref{prop:boundary on regular balls} and \autoref{thm:decompositiontheorem} implies that
\begin{equation}\label{z16}
	\mu_r(B_s(p)) \le C(N,v) s^{N-1} \, ,
	\qquad
	\text{for any $r>0$} \, ,
\end{equation}
where $B_s(p)$ is any ball of an $\RCD(-(N-1),N)$ space satisfying (H1) and (H2).
\smallskip

We let $\mu$ be any weak limit of a sequence $\mu_{r_i}$ with $r_i\downarrow 0$. By \autoref{thm:mms0}, $\mu$ is concentrated on $\partial X$. Moreover, by \eqref{z16}, $\mu = f \haus^{N-1}\res \partial X$, for some $f\in L^1(\partial X, \haus^{N-1})$. Indeed, $\mu$ is absolutely continuous w.r.t. $\haus^{N-1}\res \partial X$, which is locally finite by \cite{BrueNaberSemola22}.

In order to show that $f$ is constant $\haus^{N-1}$-a.e., it is sufficient to apply a standard differentiation argument via blow up, as $\partial X$ is $(N-1)$-rectifiable by \cite{BrueNaberSemola22}.
\smallskip

Let us fix $x\in \mathcal{S}^{N-1}\setminus \mathcal{S}^{N-2}$ and $\eps>0$. Given $\delta=\delta(\eps,N)>0$ as in \autoref{thm:eps-reg boundary}, we can find $r_0\le 1$ such that $B_r(x)$ is a $\delta$-boundary ball for any $r\le r_0$ by \cite[Theorem 1.4]{BrueNaberSemola22}.
Then, by \eqref{eq:to 0 on boundary balls} and scale invariance, it holds
\begin{equation*}
	\abs{\frac{\mu(B_r(x))}{r^{N-1}} - \gamma(N)}
	\le \eps \, \quad\text{for any $0<r<r_0$}\, .
\end{equation*}
Since $\haus^{N-1}(\mathcal{S}^{N-2})=0$, by the arbitrariness of $\eps>0$ and standard differentiation of measures, we deduce that 
\begin{equation*}
f(x)=\int_0^1(1-V_1(t))\di t\, , \quad \text{for $\haus^{N-1}$-a.e. $x\in \partial X$}. 
\end{equation*}

\subsection{Proof of \autoref{thm:eps-reg boundary}}

The proof is divided into several steps.\\ 
We begin by proving the uniform bound \eqref{eq:boundary on boundary balls} following the strategy of  \cite[Theorem 1.7]{LytchakKapovitchPetrunin21}. 
The idea is that, in the doubling space $\hat X$, the double of the $\delta$-boundary ball $B_2(p)$ is a $\delta$-regular ball; hence \autoref{thm:mms0} provides a sharp control on $\hat \mu_r$, the boundary measure of $\hat X$. The key observation is that $\hat \mu_r = \mu_r$ in $X\setminus B_{r}(\partial X)$ and $\mu_r(B_r(\partial X))$ is easily controlled by means of the estimate on the tubular neighborhood of $\partial X$ obtained in \cite{BrueNaberSemola22}.\\
In order to achieve \eqref{eq:to 0 on boundary balls}, we need to sharpen the estimate on $\mu_r(B_r(\partial X))$ when $r\downarrow 0$. Here we use two ingredients: 
\begin{itemize}
	\item[(1)] The control of $\delta$-boundary balls at every scale and location obtained in \cite[Theorem 8.1]{BrueNaberSemola22};
	
	\item[(2)] the Laplacian comparison (H2).
\end{itemize}
The first ingredient says that any ball $B_r(x)\subset B_2(p)$ is $\delta$-GH close to $B_r((0,\dist_{\partial X}(x)))\subset \setR_+^N$, hence the volume convergence theorem ensures that their volumes are comparable. Plugging this information in the definition of $\mu_r(B_r(\partial X))$, it is easily seen that \eqref{eq:to 0 on boundary balls} follows provided we control the $\haus^{N-1}$-measure of the level sets $\{\dist_{\partial X} = s\}$ in the limit $s\downarrow 0$. Here is where (2) comes into play. Indeed, the Laplacian bound \eqref{eq:(H2)} provides an almost monotonicity of $\haus^{N-1}(\{\dist_{\partial X} = s\})$ guaranteeing sharp controls and the existence of the limit.

\subsubsection{Proof of \eqref{eq:boundary on boundary balls}}

For any $r<10^{-10}$, we decompose
\begin{equation}
	B_1( \hat p ) = (B_1(\hat p) \cap B_{10r}(\partial X)) \cup (B_1(\hat p)\setminus B_{10r}(\partial X) ) \, ,
\end{equation}
where $\hat p\in \hat X$ is the point corresponding to $p$ in the doubling $\hat X$, and $\partial X \subset \hat X$ denotes the image of $\partial X$ through the isometric embedding $X \to \hat X$. Observe that
\begin{equation}
	\hat \mu_r(B_1(\hat p)\setminus B_{10r}(\partial X))
	= 2 \mu_r(B_1(p)\setminus B_{10r}(\partial X)) \, ,
\end{equation}
where $\hat \mu_r$ denotes the metric measure boundary in $\hat X$. Recalling from \eqref{eq:lower} and \eqref{eq:lower2} that the negative part of $\hat \mu_r$ is $O(r)$, we deduce
\begin{equation}\label{eq:split}
	\begin{split}
	|\mu_r(B_1(p)) - \mu_r(B_1(p)\cap B_{10 r}(\partial X))|
	& = \frac{1}{2} |\hat \mu_r(B_1(\hat p)\setminus B_{10r}(\partial X))|
	\\& 
	\le \frac{1}{2} |\hat \mu_r|(B_1(\hat p)) + C(N)r
	\\& \le  \eps + C(N)r \, .
	\end{split}
\end{equation}
In the last inequality above we used that $B_{10}(\hat p)$ is a $\delta$-regular ball, since $B_1(p)$ is a $\delta$-boundary ball, and \autoref{prop:boundary on regular balls}.
The tubular neighborhood estimate
\begin{equation}
	\haus^N(B_{10r}(\partial X)\cap B_1(p)) \le C(N) r \, ,
\end{equation}
proven in \cite[Theorem 1.4]{BrueNaberSemola22}, implies that 
\begin{equation}
	|\mu_r|(B_1(p)\cap B_{10 r}(\partial X))
	\le C(N) \, ,
\end{equation}
that together with \eqref{eq:split} gives \eqref{eq:boundary on boundary balls}.

\subsubsection{Proof of \eqref{eq:to 0 on boundary balls}}

For any compact set $K\subset \partial X$ and $r\ge 0$, we define
\begin{equation*}
\Gamma_{r,K}:=\left\{x\in X\, :\, \dist_{\partial X}(x)\le r\, \text{ and there exists $y\in K\cap \partial X$ with $\dist(x,\partial X)=\dist(x,y)$}\right\}\, ,
\end{equation*}
\begin{equation*}
\Sigma_{r,K}:=\left\{x\in X\, :\, \dist_{\partial X}(x)= r\, \text{ and there exists $y\in K\cap \partial X$ with $\dist(x,\partial X)=\dist(x,y)$}\right\}\, ,
\end{equation*}
and notice that $\Gamma_{r,K}=\bigcup_{0\le s\le r}\Sigma_{s,K}$.

\smallskip

We first claim that	
\begin{equation}\label{eq:error1}
\limsup_{r\downarrow 0}\abs{\mu_r(B_1(p))-\mu_r(\Gamma_{10r,B_1(p)})}\le \eps\, ,
\end{equation}
provided $B_1(p)$ is a $\delta(\eps,N)$-boundary ball.
In view of \eqref{eq:split} it is enough to check that
\begin{equation}\label{z13}
	\limsup_{r\downarrow 0}\abs{\mu_r(B_1(p)\cap B_{10r}(p))-\mu_r(\Gamma_{10r,B_1(p)})} \le  \eps\, .
\end{equation}
The elementary inclusion
\begin{equation}
	B_{1-10r}(p)\cap B_{10r}(\partial X)
	\subset 
	\Gamma_{10r, B_1(p)} 
	\subset
	B_{1+10r}(p)\cap B_{10r}(\partial X) \, ,
\end{equation}
yields
\begin{equation*}
	\begin{split}
	\limsup_{r\downarrow 0}&\abs{\mu_r(B_1(p)\cap B_{10r}(p))-\mu_r(\Gamma_{10r,B_1(p)})}
	\\& \le \limsup_{r\downarrow 0} \frac{2}{r} \haus^N((B_{1+10r}(p)\setminus B_{1-10r}(p))\cap B_{10r}(\partial X)) \, .
	\end{split}
\end{equation*}
In order to estimate the latter, we use that 
\begin{equation*}
	\nu_r := \frac{1}{r}\haus^N \res (B_2(p)\cap B_{10r}(\partial X)) \weakto \haus^{N-1}\res(\partial X\cap B_2(p))
\end{equation*}
as $r\downarrow 0$ (cf. Step 1 in the proof of \autoref{lemma:almostmonotonicity} below).
For $\Leb^1$-a.e. $\eta<10^{-10}$, it holds
\begin{equation*}
	\begin{split}
		\limsup_{r\downarrow 0}  \frac{2}{r} & \haus^N((B_{1+10r}(p)\setminus B_{1-10r}(p))\cap B_{10r}(\partial X)) 
		\\&
		\le \limsup_{r\downarrow 0} 2\nu_r(B_{1+\eta}(p)\setminus B_{1-\eta}(p))
		\\&
		=2 \haus^{N-1}((B_{1+\eta}(p)\setminus B_{1-\eta}(p))\cap \partial X) \, ,
	\end{split}
\end{equation*}
which implies
\begin{equation*}
	\limsup_{r\downarrow 0}  \frac{2}{r}  \haus^N((B_{1+10r}(p)\setminus B_{1-10r}(p))\cap B_{10r}(\partial X)) 
	\le 2\haus^{N-1}(\partial B_1(p)\cap \partial X)
	\le \eps\, .
\end{equation*}
The last inequality follows from the continuity of the boundary measure w.r.t. the GH-convergence \cite[Theorem 8.8]{BrueNaberSemola22} by assuming $\delta \le \delta(\eps, N)$.

\medskip

In virtue of \eqref{eq:error1}, in order to conclude the proof of \eqref{eq:to 0 on boundary balls} it is enough to control $\mu_r(\Gamma_{10r,B_1(p)})$. To this aim we rely on the following.

\begin{lemma}\label{lemma:almostmonotonicity}
	Let $(X,\dist, \haus^N)$ be an $\RCD(-\delta(N-1),N)$ space satisfying the conditions (H1) and (H2). Fix  $p\in X$ such that $B_1(p)$ is a $\delta$-boundary ball. Then, setting
	\begin{equation}
		f(s):= \haus^{N-1}(\Sigma_{s,\overline{B}_1(p)}) \, ,
	\end{equation}
	the following hold:
	\begin{itemize}
		\item[(a)] there exists a representative of $f$ with $f(0)=\haus^{N-1}(\overline{B}_1(p)\cap\partial X)$ and satisfying
		\begin{equation}\label{eq:almostmonot}
			f(s_1)-f(s_2) \le C(N,\delta)(s_1 - s_2) \, ,
			\qquad
			\text{for any $0\le s_2\le s_1<2$} \, ;
		\end{equation}
		\item[(b)]  
		\begin{equation}
		\haus^{N-1}(B_1(p)\cap\partial X)\le \lim_{s\downarrow 0} f(s) \le \haus^{N-1}(\overline{B}_1(p)\cap \partial X)\, .
		\end{equation}
	\end{itemize}

\end{lemma}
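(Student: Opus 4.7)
The plan is to apply the Gauss--Green formula of \cite{BruePasqualettoSemola19,BruePasqualettoSemola21} to the function $\rho := \dist_{\partial X}$ (which is $1$-Lipschitz with $\abs{\nabla\rho}=1$ $\haus^N$-a.e. on $X\setminus\partial X$) on the tubular sets $\Gamma_s := \Gamma_{s,\overline{B}_1(p)}$, and to exploit the Laplacian upper bound (H2) to deduce an almost-monotonicity of the function $s\mapsto f(s)$. By the coarea formula and $\abs{\nabla\rho}=1$ a.e., we have $\haus^N(\Gamma_s)=\int_0^s f(t)\,dt$, so the problem reduces to an analysis of the reduced boundary of these tubes.

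For part (a), fix $0<s'\le s<2$ and decompose the reduced boundary $\partial^{*}(\Gamma_s\setminus\Gamma_{s'})$ inside $X\setminus\partial X$ into three pieces up to $\haus^{N-1}$-null sets: the lower cap $\Sigma_{s',\overline{B}_1(p)}$ with outer unit normal $-\nabla\rho$, the upper cap $\Sigma_{s,\overline{B}_1(p)}$ with outer unit normal $+\nabla\rho$, and a lateral piece consisting of points with $s'<\rho<s$ whose footpoint on $\partial X$ lies on $\partial\overline{B}_1(p)$. On the lateral piece $\nabla\rho$ is tangent, because the gradient flow of $\rho$ preserves the footpoint; hence its flux through this piece vanishes $\haus^{N-1}$-a.e. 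The Gauss--Green formula then yields
\[
f(s)-f(s')=(\Delta\rho)\bigl(\Gamma_s\setminus\Gamma_{s'}\bigr)\le -\delta(N-1)\int_{\Gamma_s\setminus\Gamma_{s'}}\rho\,d\haus^N\le 0,
\]
by (H2), so $f$ admits a non-increasing representative on $(0,2)$, which in particular gives the stated Lipschitz bound for $s_2>0$. The extension to $s_2=0$ with $f(0):=\haus^{N-1}(\overline{B}_1(p)\cap\partial X)$ is done by passing to the limit $s_2\downarrow 0$, using the tubular neighborhood estimate $\haus^N(\Gamma_s)\le C(N)s$ from \cite{BrueNaberSemola22} to absorb into the constant $C(N,\delta)$ any possible singular contribution of $\Delta\rho$ supported on $\partial X$ (which is not controlled by (H2) as stated).

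For part (b), the upper bound $\lim_{s\downarrow 0} f(s)\le\haus^{N-1}(\overline{B}_1(p)\cap\partial X)$ follows from (a) with $s_2=0$ after letting $s_1\downarrow 0$. For the lower bound one uses the $(N-1)$-rectifiability of $\partial X$ from \cite{BrueNaberSemola22}: for $\haus^{N-1}$-a.e.\ regular point $y\in B_{1-\eta}(p)\cap\partial X$ the normal geodesic issued from $y$ lies in $\Sigma_{s,B_1(p)}\subset \Sigma_{s,\overline{B}_1(p)}$ for all small $s$, and the area distortion of the normal flow at such $y$ tends to $1$; hence $\liminf_{s\downarrow 0} f(s)\ge \haus^{N-1}(B_{1-\eta}(p)\cap\partial X)$, and the conclusion follows by letting $\eta\downarrow 0$. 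The main technical obstacle will be the rigorous identification of the reduced boundary of $\Gamma_s$ inside the RCD framework and the proof that the lateral piece carries no Gauss--Green flux; this is expected to require an approximation argument in which the radius is perturbed to some $r\in(1-\eta,1+\eta)$ with $\haus^{N-1}(\partial\overline{B}_r(p)\cap\partial X)=0$ (valid for $\Leb^1$-a.e.\ $r$), combined with a blow-up analysis at lateral points to make rigorous the tangency of $\nabla\rho$ to the lateral boundary.
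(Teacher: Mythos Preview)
Your overall strategy---Gauss--Green applied to $\nabla\dist_{\partial X}$ on the tubular regions $\Gamma_s\setminus\Gamma_{s'}$, combined with the Laplacian upper bound (H2)---is exactly the one the paper uses, and you correctly identify the lateral-face flux as the principal technical difficulty (the paper handles it via an approximation argument in the spirit of \cite{CaffarelliCordoba93}, deferring details to \cite{MondinoSemola21}).

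There is, however, a genuine gap in your extension to $s_2=0$. For $s'>0$ your argument yields $f(s)\le f(s')$, hence $f(s)\le\lim_{s'\downarrow 0}f(s')$; but to obtain $f(s)\le f(0)+C(N,\delta)s$ you must show $\lim_{s'\downarrow 0}f(s')\le f(0)$, which is precisely the upper bound in (b). Since your proof of the upper bound in (b) appeals back to (a) at $s_2=0$, the reasoning is circular. Your proposed fix---to ``absorb into the constant $C(N,\delta)$'' the singular contribution of $\Delta\dist_{\partial X}$ on $\partial X$ via the tubular estimate $\haus^N(\Gamma_s)\le C(N)s$---cannot work: that estimate controls an $N$-dimensional volume of order $s$, whereas the singular mass on $\partial X$ is an $(N{-}1)$-dimensional quantity of order one. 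What the paper uses to break the circle is the \emph{exact} identification $\Delta\dist_{\partial X}\res\partial X=\haus^{N-1}\res\partial X$ (their Ingredient~2, from \cite[Theorem~7.4]{BrueNaberSemola22}). With this, applying Gauss--Green directly on $\Gamma_s$ (whose reduced boundary in $X$ does \emph{not} contain $\partial X\cap\overline{B}_1(p)$, those points having density one) gives $f(s)=(\Delta\dist_{\partial X})(\Gamma_s)=f(0)+(\Delta\dist_{\partial X})(\Gamma_s\setminus\partial X)\le f(0)+C(N,\delta)s$.

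For the lower bound in (b), your approach via normal geodesics and pointwise area distortion of the normal flow is natural on a smooth manifold but hard to make rigorous in the $\RCD$ setting, where a normal exponential map with controlled Jacobian is not available. The paper instead proves the averaged inequality
\[
\liminf_{s\downarrow 0}\frac{1}{s}\int_0^s f(r)\,\di r\ge\haus^{N-1}(B_t(p)\cap\partial X)
\]
directly from the weak convergence $\frac{1}{\eps}\haus^N\res B_\eps(\partial X)\weakto\haus^{N-1}\res\partial X$ (their Ingredient~1) together with the coarea formula, and then upgrades this to a bound on $\lim_{s\downarrow 0}f(s)$ using the almost-monotonicity from (a). This route avoids any pointwise control of the normal flow and stays entirely within the available measure-theoretic toolbox.
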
 

\begin{proof}
Let us outline the strategy of the proof, avoiding technicalities. Given $s_2\le s_1$ the almost monotonicity of $f(s):= \haus^{N-1}(\Sigma_{s,\overline{B}_1(p)})$ between $s_2$ and $s_1$ encoded in \eqref{eq:almostmonot} will be obtained by applying the Gauss-Green theorem to the vector field $\nabla \dist_{\partial X}$ in a \emph{rectangular region} made of the gradient flow lines of $\dist_{\partial X}$ spanning the region between the \emph{horizontal} faces $\Sigma_{s_1,\overline{B}_1(p)}$, $\Sigma_{s_2,\overline{B}_1(p)}$ parallel to $\partial X$. The only boundary terms appearing will be $f(s_1)$ and $f(s_2)$, with opposite signs, as the lateral faces of the region have normal vector perpendicular to $\nabla\dist_{\partial X}$. The sought \eqref{eq:almostmonot} will follow, as the interior term in the Gauss-Green formula is almost nonpositive by the assumption (H2).\\
Several technical difficulties arise in the course of the proof. The most challenging one is the absence of a priori regularity for the rectangular region considered above, which is dealt with an approximation argument borrowed from \cite{CaffarelliCordoba93}. 

As the proof is very similar to those of \cite[Prop. 6.14, Prop. 6.15]{MondinoSemola21}, we will just list the main ingredients and briefly indicate how to combine them.

\medskip

\textbf{Ingredient 1.} The measures
\begin{equation}\label{eq:conv1}
\nu_{\eps}:=\frac{1}{\eps}\haus^N\res B_{\eps}(\partial X)
\end{equation}
weakly converge to $\haus^{N-1}\res\partial X$ as $\eps\downarrow 0$. Moreover, for $\Leb^1$-a.e. $s>0$, the sequence of measures
\begin{equation}\label{eq:conv2}
	\nu_{s,\eps}:=\frac{1}{\eps}\haus^N\res\left\{s\le \dist_{\partial X}\le s+\eps\right\}
\end{equation}
weakly converges to $\haus^{N-1}\res\left\{\dist_{\partial X}=s\right\}$ as $\eps\downarrow 0$.

The first statement can be checked by arguing as in the proof of \cite[Proposition 6.14]{MondinoSemola21}  after considering one copy of $X$ as a set of locally finite perimeter in the doubling space $\hat{X}$. If $\sigma$ denotes any weak limit of $\nu_{\eps}$, the inequality $\haus^{N-1}\res\partial X\le \sigma$ is satisfied without further conditions. The assumption (H2) enters into play in the proof of the opposite inequality. Notice that the local equi-boundedness of the family of measures $\nu_{\eps}$ follows from the tubular neighbourhood bounds in \cite[Theorem 1.4]{BrueNaberSemola22}.

The convergence of \eqref{eq:conv2} to $\haus^{N-1}\res \{\dist_{\partial X} = s\}$ is a classical statement, see for instance \cite{AmbrosioDiMarinoGigli17,BuffaComiMiranda21} for the weak convergence to the perimeter of $\{\dist_{\partial X}\le s\}$ and \cite{AmbrosioBrueSemola19,BruePasqualettoSemola19} for the identification between perimeter and $(N-1)$-dimensional Hausdorff measure.

\medskip

\textbf{Ingredient 2.} The Laplacian of $\dist_{\partial X}$ is a locally finite measure and
\begin{equation}
	\Delta\dist_{\partial X}\res\partial X=\haus^{N-1}\res\partial X \, .
\end{equation}
The same conclusion holds for $\dist_{\{\dist_{\partial X}\le s\}}$ for a.e. $s>0$.
Moreover, $\Delta\dist_{\{\dist_{\partial X}\le s\}}=\Delta\dist_{\partial X}$ on the set $\{\dist_{\partial X}>s\}$.

The first conclusion follows from \cite[Theorem 7.4]{BrueNaberSemola22}, under the condition (H2). The second conclusion can be proven by employing the coarea formula as in the proof of the convergence of \eqref{eq:conv2} and the elementary identity
\begin{equation*}
	\dist_{\{\dist_{\partial X}\le s\}}=\dist_{\partial X}-s \, ,
	\qquad
	\text{on $\{\dist_{\partial X}>s\}$} \, .
\end{equation*}

\medskip

\textbf{Proof of (a).} The bound 
\begin{equation*}
f(s)\le f(0)+C(N)s=\haus^{N-1}(\partial X\cap \overline{B}_1(p))+C(N,\delta)s\, ,\quad\text{for a.e. $0<s<2$}
\end{equation*}
can be obtained with the very same argument of the proof of \cite[Proposition 6.15]{MondinoSemola21}. Indeed, the only ingredients that are required are the Laplacian upper bound for $\dist_{\partial X}$, which is guaranteed by (H2) in the present setting, the coincidence of $\haus^{N-1}\res\partial X$ with the Minkowski content (Ingredient 1) and the identity $\Delta\dist_{\partial X}\res\partial X=\haus^{N-1}\res\partial X$ (Ingredient 2).

Analogously, we can prove that 
\begin{equation*}
f(s_1)\le f(s_2)+C(N,\delta)(s_1-s_2)\, ,
\end{equation*}
for $\Leb^1$-a.e. $0<s_2<s_1<2$. Indeed, it is sufficient to choose those $s_1,s_2$ such that the conclusions in Ingredient 1 and 2 are verified and it holds $$\Per(\{\dist_{\partial X}\le s\})=\haus^{N-1}\res\{\dist_{\partial X}=s\}\,.$$

\medskip

\textbf{Proof of (b).}
Given (a), it is easy to obtain (b). Indeed, the almost monotonicity \eqref{eq:almostmonot}, together with the condition $f(0)=\haus^{N-1}(\overline{B}_1(p)\cap \partial X)$ imply that the limit $\lim_{s\downarrow 0}f(s)$ exists and
\begin{equation*}
\lim_{s\downarrow 0}f(s)\le  f(0)= \haus^{N-1}(\overline{B}_1(p)\cap \partial X)\, .
\end{equation*}
It remains to check that 
\begin{equation}\label{eq:lowerbound}
\lim_{s\downarrow 0}f(s)\ge \haus^{N-1}(B_1(p)\cap\partial X)\, .
\end{equation}
In order to prove it, we fix any $0<t<1$ and verify that 
\begin{equation}\label{eq:lowerboundt}
\lim_{s\downarrow 0}f(s)\ge  \haus^{N-1}(B_t(p)\cap\partial X)\, .
\end{equation}
By Ingredient 1 and the coarea formula, it is easy to infer that 
\begin{equation}\label{eq:inte}
\lim_{s\downarrow 0}\frac{1}{s}\int_0^sf(r)\di r\ge  \haus^{N-1}(B_t(p)\cap\partial X)\, .
\end{equation}
Thanks to \eqref{eq:almostmonot}, \eqref{eq:inte} yields \eqref{eq:lowerboundt}. Taking the limit as $t\uparrow 1$ at the right hand side of \eqref{eq:lowerboundt} gives \eqref{eq:lowerbound}.

\end{proof}

We can now conclude the proof of \autoref{thm:eps-reg boundary}.
\\The structure theorem for $\delta$-boundary balls \cite[Theorem 8.1]{BrueNaberSemola22}, combined with the volume convergence \cite{Colding97,CheegerColding97,DePhilippisGigli18}, easily gives
\begin{equation}
	\abs{ V_r(\dist_{\partial X}(x)) -  \frac{\haus^N(B_r(x))}{\omega_N r^N} }
	\le \eps \, ,
	\qquad
	\text{for any $x \in B_{1}(p)$, and $r<10^{-10}$}\, ,
\end{equation}
provided $\delta \le \delta(\eps,N)$ is small enough, where $V_r$ was defined in \eqref{eq:Vr}.\\ 
Hence,
\begin{equation}\label{eq:error2}
	\begin{split}
		&\abs{ \mu_r(\Gamma_{10r,B_1(p)}) - \frac{1}{r}\int_{\Gamma_{10r,B_1(p)}}\left(1 - V_r(\dist_{\partial X}(x))\right)\di \haus^N(x)   }
		\\ & \qquad\qquad
		\le  \frac{\eps}{r}\haus^N(B_1(p)\cap B_{10 r}(\partial X))
		\\& \qquad\qquad
		\le C(N) \eps \, ,
	\end{split}
\end{equation}
where we use the tubular neighbourhood bound from \cite[Theorem 1.4]{BrueNaberSemola22}.\\ Employing the coarea formula and noticing that $1-V_r(t)=0$ for any $t\ge r$, we can compute
\begin{equation}\label{eq:VrcoareaSigma}
	\begin{split}
		\frac{1}{r}\int_{\Gamma_{10r,B_1(p)}}&\left(1 - V_r(\dist_{\partial X}(x))\right)\di \haus^N(x) 
		\\& =
		\frac{1}{r}\int_0^r (1-V_r(s)) \haus^{N-1}(\Sigma_{s,B_1(p)}) \di s
		\\& =
		\int_{0}^1 (1-V_r(tr)) \haus^{N-1}(\Sigma_{tr,B_1(p)}) \di t
		\\& =
		\int_{0}^1 (1-V_1(t)) \haus^{N-1}(\Sigma_{tr,B_1(p)}) \di t \, .
	\end{split}
\end{equation}
When $\delta \le \delta(\eps,N)$, \cite[Theorem 1.2]{BrueNaberSemola22} shows that
\begin{equation}\label{eq:epsregB}
	\max\left\{\abs{\haus^{N-1}(B_1(p)\cap \partial X) - \omega_{N-1}}\, , \abs{\haus^{N-1}(\overline{B}_1(p)\cap \partial X) - \omega_{N-1}}\right\}\le \eps \, .
\end{equation}
Thanks to \autoref{lemma:almostmonotonicity} and the dominated convergence theorem, from \eqref{eq:error2} and \eqref{eq:VrcoareaSigma} we deduce
\begin{equation}\label{eq:dominated}
\limsup_{r\downarrow 0}\abs{\frac{1}{r}\int_{\Gamma_{10r,B_1(p)}}\left(1 - V_r(\dist_{\partial X}(x))\right)\di \haus^N(x) - \omega_{N-1}\int_0^1(1-V_1(t))\di t }
\le C(N)\eps\, .
\end{equation}
Recalling that $\gamma(N) = \omega_{N-1}\int_0^1(1-V_1(t))\di t$, combining \eqref{eq:error1}, \eqref{eq:error2} and \eqref{eq:dominated}, we conclude
\begin{equation}
\limsup_{r\downarrow 0}\abs{  \mu_{r}(B_1(p)) - \gamma(N) }
\le  C(N)\eps\, ,
\end{equation}
hence completing the proof of \eqref{eq:to 0 on boundary balls}.


\begin{thebibliography}{GMS13}


\bibitem[AB03]{AlexanderBishop03}
\textsc{S. Alexander, R.L. Bishop:} 
\textit{$\mathcal{F}K$-convex functions on metric spaces.} 
Manuscripta Math. {\bf 110} (2003), no. 1, 115–133. 


%
\bibitem[ABS19]{AmbrosioBrueSemola19}
\textsc{L. Ambrosio, E. Bru\'e, D. Semola:}
\textit{Rigidity of the 1-Bakry-\'Emery inequality and sets of finite perimeter in RCD spaces}.
Geom. Funct. Anal., {\bf 19} (2019), n.4, 949-1001

%

\bibitem[ADMG17]{AmbrosioDiMarinoGigli17}
\textsc{L. Ambrosio, S. Di Marino, N. Gigli:} 
\textit{Perimeter as relaxed Minkowski content in metric measure spaces.} 
Nonlinear Anal. {\bf 153} (2017), 78–88.

%
\bibitem[AGMR15]{AmbrosioGigliMondinoRajala15}
\textsc{L. Ambrosio, N. Gigli, A. Mondino, T. Rajala:}
\textit{Riemannian Ricci curvature lower bounds in metric measure spaces with $\sigma$-finite measure.}
Trans. Amer. Math. Soc., {\bf 367} (2015), 4661–4701. 
%
					
\bibitem[AGS14]{AmbrosioGigliSavare14}
\textsc{L. Ambrosio, N. Gigli, G. Savar\'e:}
\textit{ Metric measure spaces with Riemannian Ricci curvature bounded from below}.
Duke Math. J., {\bf 163} (2014), 1405--1490.   
%
\bibitem[AGS15]{AmbrosioGigliSavare15}
\textsc{L. Ambrosio, N. Gigli, G. Savar\'e:}
\textit{Bakry-\'Emery curvature-dimension condition and Riemannian Ricci curvature bounds.}
                Ann. Probab., {\bf 43} (2015), 339--404. 
%
		

%
\bibitem[AMS14]{AmbrosioMondinoSavare14}
\textsc{L. Ambrosio, A. Mondino, G. Savar\'e}:
\textit{On the Bakry-\'Emery condition, the gradient estimates and
the local-to-global property of $\RCD^*(K,N)$ metric measure spaces}.
J.  Geom. Anal., \textbf{26} (2014), 1-33.
	
\bibitem[AMS19]{AmbrosioMondinoSavare19}
\textsc{L. Ambrosio, A. Mondino, G. Savaré:} 
\textit{Nonlinear diffusion equations and curvature conditions in metric measure spaces.} 
Mem. Amer. Math. Soc. {\bf 262} (2019), no. 1270, v+121 pp.	

			



\bibitem[BGHX21]{BrenaGigliHondaZhu21}
\textsc{C. Brena, N. Gigli, S. Honda, X. Zhu:}
\textit{Weakly non-collapsed $\RCD$ spaces are strongly non-collapsed},
preprint arXiv:2110.02420.

	
\bibitem[BNS22]{BrueNaberSemola22}
\textsc{E. Bruè, A. Naber, D. Semola:} 
\textit{Boundary regularity and stability for spaces with Ricci bounded below.} 
Invent. math. {\bf 228} (2022), no. 2, 777–891.	
	
				
					
\bibitem[BPS19]{BruePasqualettoSemola19}		
\textsc{E. Bruè, E. Pasqualetto, D. Semola:}	
\textit{Rectifiability of the reduced boundary for sets of finite perimeter over $\RCD(K,N)$ spaces,}
J. Eur. Math. Soc. (2022) doi: 10.4171/JEMS/1217.
	
	

	\bibitem[BPS20]{BruePasqualettoSemola20}		
\textsc{E. Bruè, E. Pasqualetto, D. Semola:}	
\textit{Rectifiability of $\RCD(K,N)$ spaces via $\delta$-splitting maps,}
Annales Fenn. Math., {\bf 46},  (2021),465--482. 
		
	
\bibitem[BPS21]{BruePasqualettoSemola21}
\textsc{E. Bruè, E. Pasqualetto, D. Semola:}	
\textit{Constancy of the dimension in codimension one and locality of the unit normal on $\RCD(K,N)$ spaces,}
accepted by Ann. Sc. Norm. Super. Pisa Cl. Sci., preprint arXiv:2109.12585v1.	
					
%

\bibitem[BCM21]{BuffaComiMiranda21}
\textsc{V. Buffa, G. Comi, M. Miranda:} 
\textit{On BV functions and essentially bounded divergence measure fields in metric spaces,}
Rev. Mat. Iberoam. doi: 10.4171/RMI/1291 (2021). 


\bibitem[BGP92]{BuragoGromovPerelman92}
\textsc{Y. Burago, M. Gromov, G. Perelman:}
\textit{A. D. Aleksandrov spaces with curvatures bounded below.}
Uspekhi Mat. Nauk {\bf 47} (1992), no. 2(284), 3–51, 222; translation in 
Russian Math. Surveys {\bf 47} (1992), no. 2, 1–58 


\bibitem[CaC93]{CaffarelliCordoba93}
\textsc{L. A. Caffarelli, A. Córdoba:} 
\textit{An elementary regularity theory of minimal surfaces.} 
Differential Integral Equations {\bf 6} (1993), no. 1, 1–13.


\bibitem[CM21]{CavallettiMilman21}
\textsc{F. Cavalletti, E. Milman:} 
\textit{The globalization theorem for the Curvature-Dimension condition.} 
Invent. math. {\bf 226} (2021), no. 1, 1–137.
				
		

\bibitem[C01]{Cheeger01}
\textsc{J. Cheeger:} 
\textit{Degeneration of Riemannian metrics under Ricci curvature bounds.} 
Lezioni Fermiane. Scuola Normale Superiore, Pisa, 2001. ii+77 pp.



\bibitem[CC96]{CheegerColding96}
\textsc{J. Cheeger, T.-H. Colding:}
\textit{Lower bounds on Ricci curvature and the almost rigidity of warped products,}
Ann. of Math. (2), {\bf 144} (1996), 189--237.			
%
 \bibitem[CC97]{CheegerColding97}
 \textsc{J. Cheeger, T.-H. Colding:}
 \textit{On the structure of spaces with Ricci curvature bounded below. I,}
 J. Differential Geom., {\bf 46} (1997), 406--480.
%
	
 \bibitem[CC00]{CheegerColding2000b}
\textsc{J. Cheeger, T.-H. Colding:}
\textit{On the structure of spaces with Ricci curvature bounded below. III,}
J. Differential Geom., {\bf 54} (2000), 37--74.			
%


\bibitem[CJN21]{CheegerJiangNaber21}
\textsc{J. Cheeger, W. Jiang, A. Naber:}
\textit{Rectifiability of singular sets in noncollapsed spaces with Ricci curvature bounded below,}
Ann. of Math. (2) {\bf 193} (2021), no. 2, 407–538.


\bibitem[CN15]{CheegerNaber15}
\textsc{J. Cheeger, A. Naber:}
\textit{Regularity of Einstein manifolds and the codimension 4 conjecture,} 
Ann. of Math. (2) \textbf{182} (2015), no. 3, 1093–1165.


\bibitem[C97]{Colding97}
\textsc{T.-H. Colding:}
\textit{Ricci curvature and volume convergence,}
Ann. of Math. (2) \textbf{145} (1997), no. 3, 477–501. 





%
\bibitem[DPG16]{DePhilippisGigli16}
\textsc{G. De Philippis, N. Gigli:}
\textit{From volume cone to metric cone in the nonsmooth setting.}
Geom. Funct. Anal., {\bf 26} (2016), 1526--1587.			
%
\bibitem[DPG18]{DePhilippisGigli18}
\textsc{G. De Philippis, N. Gigli:}
\textit{ Non-collapsed spaces with Ricci curvature bounded from below}.
J. Éc. polytech. Math., {\bf 5} (2018), 613–650.			
%



%
\bibitem[EKS15]{ErbarKuwadaSturm15}
\textsc{M. Erbar, K. Kuwada, K.-T. Sturm:}
\textit{On the equivalence of the entropic curvature-dimension condition and Bochner’s inequality on metric measure spaces.}
Invent. Math., {\bf 201} (2015), 993--1071.		
%
%



%
\bibitem[G15]{Gigli15}
\textsc{N. Gigli:}
\textit{On the differential structure of metric measure spaces and applications.}
Mem. Amer. Math. Soc., {\bf 236} (2015), vi--91.
%
	
	


\bibitem[G18]{Gigli18}
\textsc{N. Gigli:}
\textit{Nonsmooth differential geometry: an approach tailored for spaces with Ricci curvature bounded from below.}
Mem. Amer. Math. Soc., {\bf 251} (2018), v--161.


  	    
	   	    
\bibitem[GP16b]{GigliPasqualetto16b}
\textsc{N. Gigli, E. Pasqualetto:}
\textit{Equivalence of two different notions of tangent bundle on rectifiable metric measure spaces,}
preprint arXiv:1611.09645, to appear on Comm. Anal. Geom.	   

\bibitem[Ha18]{Han18}
\textsc{B.-X. Han:}
\textit{Ricci tensor on $\RCD^*(K,N)$ spaces.}
J. Geom. Anal. {\bf 28} (2018), no. 2, 1295–1314. 



\bibitem[H19]{Honda19}
\textsc{S. Honda:}
\textit{New differential operator and non collapsed $\RCD$ spaces,}
Geom. Topol. {\bf 24} (2020), no. 4, 2127–2148.

	

\bibitem[HP22]{HondaPeng22}
\textsc{S. Honda, Y. Peng:}
\textit{A note on topological stability theorem from $\RCD$ spaces to Riemannian manifolds,}
preprint arXiv:2202.06500.



	   
\bibitem[JN16]{JiangNaber16}
\textsc{W. Jiang, A. Naber:}
\textit{$L^2$ curvature bounds on manifolds with bounded Ricci curvature,}
 Ann. of Math. (2) {\bf 193} (2021), no. 1, 107–222. 

\bibitem[KM19]{KapovitchMondino19}
\textsc{V. Kapovitch, A. Mondino:}
\textit{On the topology and the boundary of $N$-dimensional $\RCD(K,N)$ spaces,}
Geom. Topol. {\bf 25} (2021), no. 1, 445–495.



\bibitem[LN20]{LiNaber}
\textsc{N. Li, A. Naber:} 
\textit{Quantitative estimates on the singular sets of Alexandrov spaces.} 
Peking Math. J. {\bf 3} (2020), no. 2, 203–234.

\bibitem[LV09]{LottVillani}
\textsc{J. Lott, C. Villani:}
\textit{Ricci curvature for metric-measure spaces via optimal transport.}
Ann. of Math. (2), {\bf 169} (2009), 903--991.
%


\bibitem[KLP21]{LytchakKapovitchPetrunin21}
\textsc{V. Kapovitch, A. Lytchak, A. Petrunin:} 
\textit{Metric-measure boundary and geodesic flow on Alexandrov spaces.} 
J. Eur. Math. Soc. (JEMS) {\bf 23} (2021), no. 1, 29–62. 



\bibitem[MS21]{MondinoSemola21}
\textsc{A. Mondino, D. Semola:}
\textit{Weak Laplacian bounds and minimal boundaries in non-smooth spaces with Ricci curvature lower bounds,}
preprint arXiv:2107.12344v2.


\bibitem[Per91]{Perelman91}
\textsc{G. Perelman:} 
\textit{A.D. Alexandrov’s spaces with curvatures bounded from below, II,} 
preprint available at http://www.math.psu.edu/petrunin/papers/alexandrov/perelmanASWCBFB2+.pdf.

\bibitem[Per95]{Perelman95}
\textsc{G. Perelman:}
\textit{DC structure on Alexandrov space with curvature bounded from below,}
preprint available at https://anton-petrunin.github.io/papers/alexandrov/Cstructure.pdf.

\bibitem[PP96]{PerelmanPetrunin96}
\textsc{G. Perelman, A. Petrunin:}
\textit{Quasigeodesics and gradient curves on Alexandrov spaces,}
(1996) preprint available at https://anton-petrunin.github.io/papers.html.


\bibitem[P07]{Petrunin07}
\textsc{A. Petrunin:}
\textit{Semiconcave functions in Alexandrov's geometry.} 
Surveys in differential geometry. Vol. XI, 137–201, 
Surv. Differ. Geom., 11, Int. Press, Somerville, MA, 2007. 


\bibitem[P11]{Petrunin11}
\textsc{A. Petrunin:}
\textit{Alexandrov meets Lott-Villani-Sturm.} 
M\"unster J. Math., {\bf 4} (2011), 53--64.
%


\bibitem[R12]{Rajala12}
\textsc{T. Rajala:} 
\textit{Local Poincaré inequalities from stable curvature conditions on metric spaces.} 
Calc. Var. Partial Differential Equations {\bf 44} (2012), no. 3-4, 477–494.


\bibitem[Sa14]{Savare14}
\textsc{G. Savaré:}
\textit{Self-improvement of the Bakry-Émery condition and Wasserstein contraction of the heat flow in $\RCD(K,\infty)$ metric measure spaces,}
Discrete Contin. Dyn. Syst. {\bf 34} (2014), no. 4, 1641–1661. 




\bibitem[S06a]{Sturm06a}
\textsc{K.-T. Sturm:}
\textit{On the geometry of metric measure spaces I.}
Acta Math., {\bf 196} (2006), 65--131.
%
\bibitem[S06b]{Sturm06b}
\textsc{K.-T. Sturm:}
\textit{On the geometry of metric measure spaces II.}
Acta Math., {\bf 196} (2006), 133--177.


\bibitem[VR08]{VonRenesse08}
\textsc{M.-K. Von Renesse:}
\textit{On local Poincaré via transportation.}
Math. Z., {\bf 259} (2008), 21--31.	
%
%


\bibitem[ZZ10]{ZhangZhu10}
\textsc{H.-C. Zhang, X.-P. Zhu:} 
\textit{Ricci curvature on Alexandrov spaces and rigidity theorems.} 
Comm. Anal. Geom. {\bf 18} (2010), no. 3, 503–553.
			
\end{thebibliography}
\end{document}